\date{}
\title[Unique continuation estimates for Baouendi--Grushin equations]{Unique continuation estimates for Baouendi--Grushin equations on cylinders}
\author{Paul Alphonse}
\address{(Paul Alphonse) Université de Lyon, ENSL, UMPA - UMR 5669, F-69364 Lyon}
\email{paul.alphonse@ens-lyon.fr}
\author{Albrecht Seelmann}
\address{(Albrecht Seelmann) Technische Universit\"at Dortmund, Fakult\"at f\"ur Mathematik, D-44221 Dortmund, Germany}
\email{albrecht.seelmann@tu-dortmund.de}
\keywords{Unique continuation estimates; spectral inequalities; Baouendi--Grushin operators; Shubin operators; thick sets;
null-controllability}
\subjclass[2020]{35P05, 93B05, 35P10}
\numberwithin{equation}{section}
\newtheorem{thm}{Theorem}[section]
\newtheorem{prop}[thm]{Proposition}
\newtheorem{lem}[thm]{Lemma}
\newtheorem{cor}[thm]{Corollary}
\theoremstyle{definition}
\newtheorem{dfn}[thm]{Definition}
\newtheorem{ex}[thm]{Example}
\newtheorem{rk}[thm]{Remark}
\DeclareMathOperator{\Supp}{Supp}
\DeclareMathOperator{\diam}{diam}
\DeclareMathOperator{\TT}{\mathbb T}
\DeclareMathOperator{\ZZ}{\mathbb Z}
\DeclareMathOperator{\spane}{span}
\DeclareMathOperator{\thick}{\Lambda}
\DeclarePairedDelimiter{\abs}{\lvert}{\rvert}
\DeclarePairedDelimiter{\norm}{\lVert}{\rVert}
\DeclarePairedDelimiter{\sprod}{\langle}{\rangle}
\newcommand{\RR}{\mathbb{R}}
\newcommand{\NN}{\mathbb{N}}
\newcommand{\cE}{\mathcal{E}}
\newcommand{\cQ}{\mathcal{Q}}
\newcommand{\euler}{e}
\newcommand{\scrit}{R}
\newcommand{\var}{\mathbb R^d\times\mathbb T^d}
\newcommand{\bone}{\mathbf{1}}
\newcommand{\bmone}{\mathbbm{1}}
\newcommand{\dd}{\mathrm d}
\newcommand{\good}{\mathrm{gd}}
\newcommand{\dist}{d}
\newcommand{\eps}{\varepsilon}
\newcommand{\para}{\alpha}
\newcommand{\spt}{\mathbb Z^d}
\begin{document}

\begin{abstract}
	We prove time-pointwise quantitative unique continuation estimates for the evolution operators associated to
	(fractional powers of) the Baouendi--Grushin operators on the cylinder $\RR^d \times \TT^d$. Corresponding spectral
	inequalities, relating for functions from spectral subspaces associated to finite energy intervals their $L^2$-norm on the whole
	cylinder to the $L^2$-norm on a suitable subset, and results on exact and approximate null-controllabilty are deduced. This
	extends and complements results obtained recently by the authors and by Jaming and Wang.
\end{abstract}

\sloppy

\selectlanguage{english}

\maketitle

\section{Introduction}
The present paper continues our considerations from \cite{AlphonseS} on spectral inequalities for Shubin operators on $\RR^d$ and
extends them to Baouendi--Grushin operators on the cylinder $\RR^d \times \TT^d$, that is, to the operators
\begin{equation}\label{eq:BaouendiG}
	\Delta_{\gamma} = \Delta_x + \vert x\vert^{2\gamma}\Delta_y,\quad (x,y)\in\var,
\end{equation}
where $\gamma\geq1$ is a positive integer. Here, we consistently associate with $x$ the coordinates in $\RR^d$ and with $y$ the
coordinates in the $d$-dimensional torus $\TT^d$, and $\Delta_y$ denotes the Laplacian on $\TT^d$. Analogously to
\cite{AlphonseS}, we aim at establishing so-called \emph{spectral inequalities} of the form
\[
	\forall\lambda\geq0,\ \forall f \in \cE_{\lambda}(-\Delta_{\gamma})
	,\quad
	\norm{f}_{L^2(\var)}
	\leq
	c_{\lambda,\omega} \norm{f}_{L^2(\omega)}
	,
\]
where $\omega \subset \var$ is a measurable subset with positive measure whose geometry is to be understood and
$c_{\lambda,\omega} > 0$ is a positive constant whose dependence on $\omega$ and $\lambda>0$ is to be tracked. We refer to
\cite{AlphonseS} and the references cited therein for an overview on the notion of a spectral inequality and corresponding works
on related models.

The core of our considerations here are \emph{time-pointwise quantitative unique continuation estimates} for the evolution
operators associated to (a fractional power of) the Baouendi--Grushin operators \eqref{eq:BaouendiG}, which are of the form
\begin{equation}\label{eq:uc}
	\norm{ e^{-t(-\Delta_\gamma)^{\frac{1+\gamma}{2}}}g }_{L^2(\RR^d\times \TT^d)}^2
	\leq
	C_{\eps,\omega,t} \norm{ e^{-t(-\Delta_\gamma)^{\frac{1+\gamma}{2}}}g }_{L^2(\omega)}^2 + \eps\norm{g}_{L^2(\RR^d\times\TT^d)}^2
\end{equation}
for $t > 0$, $\eps > 0$ and $g \in L^2(\RR^d\times\TT^d)$. Such estimates allow to deduce spectral inequalities of the form we are
looking for and, as a consequence, imply positive exact null-controllability results for the heat-like equations associated
to the fractional power $(-\Delta_\gamma)^s$ in the regime $s>(1+\gamma)/2$. We also deduce from the estimates \eqref{eq:uc}
approximate null-controllability results in the critical case $s=(1+\gamma)/2$ that would otherwise not be available.
Our proof of such estimates \eqref{eq:uc} essentially follows the technique established in \cite{Martin-22}, which is in turn
based on \cite{Kovrijkinethesis,Kovrijkine-01}, and relies on so-called smoothing estimates for the evolution operators. Here, we
can exploit the fact that via a Fourier expansion with respect to the $y$ variable the operators $\Delta_\gamma$ are linked to the
anharmonic oscillators
\[
	-\Delta_x + \abs{n}^2\abs{x}^{2\gamma}
	,\quad
	x \in \RR^d
	,\
	n \in \ZZ^d
	.
\]
An analogous relation has been used in the recent work \cite{JY}, which among others studies exact null-controllability for the
heat-like equations associated to operators of the same form as \eqref{eq:BaouendiG}, but with $\abs{x}^{2\gamma}$ replaced by a
more general potential $V(x)$ exhibiting a certain power growth. In our case with the specific potential $\abs{x}^{2\gamma}$ we
have much more regularity available and can make use of smoothing estimates for the evolution operators associated to fractional
powers of the anharmonic oscillators $-\Delta_x + \abs{x}^{2\gamma}$ established in \cite{Alp20b}. As a consequence, compared
with \cite{JY}, our sensor sets $\omega \subset \RR^d \times \TT^d$ are more general in two respects: $(i)$ we are not restricted
to sets of the form $\omega' \times \TT^d$ with suitable $\omega' \subset \RR^d$, and $(ii)$ our sets $\omega$ do not need to have
suitably distributed inner points.

\subsubsection*{Outline of the work}
In Section \ref{sec:results}, we present in detail the main results in the current work and discuss applications in the context
of control theory. Section~\ref{sec:uniqueCont} then provides a general scheme towards unique continuation estimates that is of
its own interest and hopefully proves useful also in future work on the subject. This is complemented by corresponding smoothing
estimates for the evolution operators in Section~\ref{sec:Grushin}, which serve as an input for the general scheme from
Section~\ref{sec:uniqueCont}. The proofs of the main results stated in Section~\ref{sec:results} are then collected in
Section~\ref{sec:mainProofs}. Section~\ref{sec:Shubin} demonstrates the usefulness of the general scheme from
Section~\ref{sec:uniqueCont} by complementing our work \cite{AlphonseS} on Shubin operators $(-\Delta)^m + \abs{x}^{2k}$ on
$\RR^d$, $k,m \in \NN^*$, with corresponding unique continuation estimates. In Appendices~\ref{app:local}
and~\ref{app:expoential} we finally discuss some technical results used for the proofs of our statements on (linear combinations
of) eigenfunctions and the optimality of the smoothing estimates, respectively.

\subsubsection*{Notations} The following notations and conventions will be used throughout this work:
\begin{enumerate}[label={\arabic*.},leftmargin=* ,parsep=2pt,itemsep=0pt,topsep=2pt]

	\item
	$\ZZ$ and $\NN$ denote the sets of integers and nonnegative integers, respectively. Moreover, $\NN^* = \NN\setminus\{0\}$.

	\item
	$\TT \cong \RR/(2\pi\mathbb Z)$ denotes the torus.

	\item
	The canonical Euclidean scalar product of $\mathbb R^d$ is denoted by $\cdot$, and $\vert\cdot\vert$ stands for the associated
	canonical Euclidean norm. We will also use the notation $\vert\cdot\vert_1$ for the $1$-norm of vectors in $\RR^d$.

	\item
	The length of a multi-index $\alpha=(\alpha_1,\cdots,\alpha_d)\in\mathbb N^d$ is denoted $\vert\alpha\vert$ and defined by
	\[
		\vert\alpha\vert
		=
		\abs{\alpha}_1
		=
		\alpha_1+\cdots+\alpha_d
		.
	\]

	\item
	The Lebesgue measure of a measurable set $\omega\subset\mathbb R^d$ is denoted $\vert\omega\vert$.

	\item
	$\mathbbm1_{\omega}$ denotes the characteristic function of any subset $\omega\subset\mathbb R^d$.

	\item
	For every measurable subset $\omega\subset\mathbb R^d$, the inner product of $L^2(\omega)$ is denoted
	$\langle\cdot,\cdot\rangle_{L^2(\omega)}$, while $\Vert\cdot\Vert_{L^2(\omega)}$ stands for the associated norm.

	\item
	The Fourier transform of a function $f \in L^2(\RR^d)$ is denoted $\mathscr F f$ or $\widehat{f}$, depending on the situation,
	and is given for $f \in L^1(\RR^d) \cap L^2(\RR^d)$ by
	\[
		(\mathscr F f)(\xi)
		=
		\widehat{f}(\xi)
		=
		\int_{\RR^d} f(x) e^{-i x\cdot\xi} \,\dd x
		,\quad
		\xi \in \RR^d
		.
	\]
	With this convention, Plancherel's theorem states that
	\[
		\forall f\in L^2(\mathbb R^d),\quad \Vert\widehat f\Vert_{L^2(\mathbb R^d)} = (2\pi)^{d/2}\Vert f\Vert_{L^2(\mathbb R^d)}.
	\]

	\item
	For a nonnegative selfadjoint operator $A$ on $L^2(\RR^d)$, $\cE_\lambda(A) = \bmone_{(-\infty,\lambda]}(A)$ with
	$\lambda \geq 0$ denotes the spectral subspace for $A$ associated with the interval $(-\infty,\lambda]$.

\end{enumerate}

\subsubsection*{Acknowledgments}
The first author thanks J. Bernier, J. Martin and S. Zugmeyer for many enthusiastic discussions during the preparation of this
work. The second author has been partially supported by the DFG grant VE 253/10-1 entitled \textit{Quantitative unique continuation
properties of elliptic PDEs with variable 2nd order coefficients and applications in control theory, Anderson localization, and
photonics.}

\section{Statement of the main results}\label{sec:results}

This section is devoted to present the main results contained in this paper.

\subsection{Thickness in the cylinder}\label{subsubsection:thckness}

First of all, let us define the notion of thickness in the cylinder $\mathbb R^d\times\mathbb T^d$, which is central in this
work. To that end, given $L_x,L_y > 0$, we introduce in $\RR^{2d} = \RR^d \times \RR^d$ the hyperrectangle
$\thick_{L_x,L_y} := (0,L_x)^d \times (0,L_y)^d$.  The notion of thickness is then defined as follows.

\begin{dfn}\label{def:thickStrip}
	A measurable set $\omega \subset \var$ is said to be \emph{$(\theta,L_x,L_y)$-thick in $\var$} with some $\theta \in (0,1]$,
	$L_x > 0$, and $L_y \in (0,2\pi]$ if
	\[
		\forall z\in\mathbb R^{2d}
		,\quad
		\vert \omega \cap (z + \thick_{L_x,L_y}) \vert
		\geq
		\theta L_x^dL_y^d
		.
	\]
\end{dfn}

\begin{rk}
	This is a common extension of the classical notion of thickness in $\RR^{2d}$, cf., for instance, \cite{Egidi-21,
	EgidiS-21,EgidiV-20}. In this regard, let us recall that a measurable set $\omega \subset \RR^{2d}$ is called \emph{$(\theta,L)$-thick}
	with some $\theta \in (0,1]$ and $L > 0$ if
	\begin{equation}\label{eq:thick}
		\forall z\in\RR^{2d}
		,\quad
		\vert \omega\cap (z + (0,L)^{2d}) \vert
		\geq
		\theta L^{2d}
		.
	\end{equation}
\end{rk}

The main result in this paper is the following statement, which provides quantitative unique continuation estimates for the
evolution operators generated by the fractional Baouendi--Grushin operator $(-\Delta_{\gamma})^s$ in the critical regime
$s=(1+\gamma)/2$.

\begin{thm}\label{thm:ucthickbg}
	There exist constants $K > 0$ and $t_0\in(0,1)$, depending only on $\gamma$ and the dimension $d$, such that for every
	$(\theta,L_x,L_y)$-thick set $\omega$ in $\var$ and all $t\in(0,t_0)$, $\eps > 0$, and $g \in L^2(\var)$, we have
	\[
		\norm{ \euler^{-t(-\Delta_{\gamma})^{\frac{1+\gamma}2}} g }_{L^2(\var)}^2
		\leq
		\biggl( \frac{K}{\theta} \biggr)^{KC_{\eps,\gamma,t}}
			\norm{ \euler^{-t(-\Delta_{\gamma})^{\frac{1+\gamma}2}}g }_{L^2(\omega)}^2 + \eps\norm{g}_{L^2(\var)}^2
	\]
	with
	\[
		C_{\eps,\gamma,t}
		=
		(1 - \log\eps - \log t)\exp\biggl( \frac{KL_x}{t^{\frac{\gamma}{\gamma+1}}} + \frac{KL_y}{t} \biggr)
		.
	\]
	In the particular case of $\gamma = 1$, the same holds even for all positive times $t>0$, and the term $-\log t$ in the
	constant $C_{\eps,\gamma,t}$ can be skipped.
\end{thm}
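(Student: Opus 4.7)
The plan is to apply the general unique continuation scheme of Section~\ref{sec:uniqueCont} to the semigroup generated by $A := (-\Delta_\gamma)^{(1+\gamma)/2}$, with the necessary smoothing estimates supplied by Section~\ref{sec:Grushin}. Following the Kovrijkine--Martin template, write $u := e^{-tA}g$ and split $u = \cE_E(A)u + (I-\cE_E(A))u$ at a spectral cutoff $E > 0$ to be chosen. The high-spectrum tail is harmless, as the functional calculus bound $\Vert(I-\cE_E(A))u\Vert_{L^2(\var)}^2 \leq e^{-2tE}\Vert g\Vert_{L^2(\var)}^2$ allows it to be absorbed into the $\eps\Vert g\Vert^2$ term as soon as $E$ is of order $t^{-1}(1-\log\eps-\log t)$; this choice already accounts for the logarithmic prefactor in $C_{\eps,\gamma,t}$. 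What remains is then a spectral inequality for $\cE_E(A)u$ on the thick sensor set $\omega$.

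A central structural observation is that Fourier-expanding in $y$ diagonalises the cylindrical geometry: writing $g(x,y) = \sum_{n \in \ZZ^d} g_n(x) e^{in\cdot y}$, the Baouendi--Grushin operator $-\Delta_\gamma$ is the direct sum over $n \in \ZZ^d$ of the anharmonic oscillators $H_n = -\Delta_x + \vert n\vert^2\vert x\vert^{2\gamma}$ on $\RR^d$, and $A$ is the direct sum of $H_n^{(1+\gamma)/2}$. Consequently $\cE_E(A)$ restricts to $\cE_E(H_n^{(1+\gamma)/2})$ on the $n$th Fourier mode and is non-trivial only when $\vert n\vert \lesssim E$, since the ground state of $H_n^{(1+\gamma)/2}$ grows like $\vert n\vert$. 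For each such active mode, the smoothing estimates of Section~\ref{sec:Grushin}, built on \cite{Alp20b}, provide the required quantitative regularity of eigenfunctions of $H_n$ of energy at most $E^{2/(1+\gamma)}$.

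From there I would establish the spectral inequality through a Kovrijkine-style local argument tailored to the anisotropic cylindrical geometry: cover $\RR^{2d}$ by translates of the hyperrectangle $\thick_{L_x,L_y}$, apply a local Bernstein--Taylor inequality on each translate to the spectrally truncated function, and use the thickness hypothesis of Definition~\ref{def:thickStrip} to transfer the $L^2$-mass from each translate into $\omega$. The per-cube multiplicative cost should come out as $(K/\theta)^{KN}$, with $N$ scaling like $L_x E^{\gamma/(\gamma+1)} + L_y E$: trigonometric polynomials of degree $\lesssim E$ in $y$ contribute the $L_y E$ term, while anharmonic-oscillator eigenfunctions of $H_n$ up to energy $E^{2/(1+\gamma)}$ contribute the $L_x E^{\gamma/(\gamma+1)}$ term, the latter exponent being exactly the critical Gevrey index of $(1+\gamma)/2$-smoothing for $H_n$. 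Inserting $E \simeq t^{-1}(1-\log\eps-\log t)$ then reproduces the advertised $C_{\eps,\gamma,t}$.

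The main technical obstacle is proving the required local Bernstein inequality uniformly in the Fourier parameter $n$, with the correct exponent $\gamma/(\gamma+1)$ in $L_x$: one must quantify, for linear combinations of eigenfunctions of $H_n$ of energy at most $E^{2/(1+\gamma)}$, how derivatives grow on cubes of side $L_x$, ensuring that after summation over $\vert n\vert \lesssim E$ the bound collapses to the announced $N$. The auxiliary material collected in Appendix~\ref{app:local} should carry out this step. Finally, in the special case $\gamma = 1$, $H_n$ is the shifted harmonic oscillator, whose eigenfunctions are entire of exponential type, so the local Bernstein inequality holds at any positive time; this removes the restriction $t < t_0$ and the $-\log t$ term from $C_{\eps,\gamma,t}$.
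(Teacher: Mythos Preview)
Your route departs from the paper's and carries a genuine gap. The paper never introduces a spectral cutoff: it applies the good/bad covering scheme (Lemma~\ref{lem:goodAndBad}, Corollary~\ref{cor:goodAndBad}, Example~\ref{ex:ucThick}) directly to $u=e^{-tA}g$, feeding in the $\mu=1$ semigroup smoothing of Corollary~\ref{cor:smoothingBG}, namely $\|\partial_x^\alpha\partial_y^\beta u\|_{L^2}\le D\,c_x^{|\alpha|}c_y^{|\beta|}\alpha!\,\beta!$ with $D\sim t^{-d/\gamma}\|g\|$, $c_x\sim t^{-\gamma/(\gamma+1)}$, $c_y\sim t^{-1}$. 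The error term $\eps D^2$ comes from the \emph{bad cubes}, not from a high-energy tail, and the $\mu=1$ case of the local estimate is exactly what produces the exponential factor $\exp(KL_xc_x+KL_yc_y)$ in $C_{\eps,\gamma,t}$. That exponential is not slack: Lemma~\ref{lem:optiregx} shows the $\alpha!$ growth is sharp for the semigroup in $x$, so one cannot hope for $\mu<1$ here. Appendix~\ref{app:local}, which you invoke, treats only $\mu_x,\mu_y<1$ and is used in the paper for (combinations of) eigenfunctions, not for the semigroup.

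The gap in your scheme is the conversion from a spectral inequality for $P_Eu:=\cE_E(A)u$ back to a unique continuation estimate for $u$. To replace $\|P_Eu\|_{L^2(\omega)}$ by $\|u\|_{L^2(\omega)}$ you must absorb $\|(I-P_E)u\|_{L^2(\omega)}\le e^{-tE}\|g\|$, and this gets multiplied by the spectral-inequality constant: the residual error is of order $(K/\theta)^{KN(E)}e^{-2tE}\|g\|^2$. Since already the $y$-truncation forces $N(E)\gtrsim L_yE$ (active modes satisfy $|n|\lesssim E$, and the Logvinenko--Sereda bound for trigonometric polynomials of degree $E$ on an $L_y$-thick set has exponent $\sim L_yE$), this residual equals $\exp\bigl((KL_y\log(K/\theta)-2t)E\bigr)\|g\|^2$ and cannot be driven below $\eps\|g\|^2$ once $t<\tfrac{K}{2}L_y\log(K/\theta)$, no matter how $E$ is chosen. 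Your strategy therefore breaks down at small times, which is precisely the regime the theorem addresses. Relatedly, the claim that $N\sim L_xE^{\gamma/(\gamma+1)}+L_yE$ with $E\sim t^{-1}(1-\log\eps-\log t)$ ``reproduces'' the stated $C_{\eps,\gamma,t}$ is incorrect as written: your $N$ depends linearly on $L_x,L_y$, whereas the theorem has them inside an exponential; these shapes do not match.
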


As a corollary to Theorem~\ref{thm:ucthickbg} and its proof, we obtain the following spectral inequality.

\begin{cor}\label{cor:speinthickbg}
	Given $L_x > 0$ and $L_y \in (0,1]$, there exists a constant $K > 0$, depending only on $d$, $\gamma$, $L_x$, and $L_y$, such
	that for every $(\theta,L_x,L_y)$-thick set $\omega$ in $\var$, every $\lambda \geq 0$, and all $f \in \cE_\lambda(-\Delta_\gamma)$
	we have
	\[
		\norm{f}^2_{L^2(\var)}
		\leq
		\biggl( \frac{K}{\theta} \biggr)^{K(1+\lambda^{\frac{1+\gamma}2})} \norm{f}^2_{L^2(\omega)}
		.
	\]
	In the particular case of $\gamma = 1$, this estimate can be strengthened as
	\[
		\norm{f}_{L^2(\var)}^2
		\leq
		\biggl( \frac{K}{\theta} \biggr)^{K(1 + L_y\lambda + L_x^2\lambda)} \norm{f}_{L^2(\omega)}^2
	\]
	with a constant $K > 0$ depending only on the dimension $d$.
\end{cor}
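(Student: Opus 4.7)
\emph{Proof plan.} I would derive the spectral inequality from Theorem~\ref{thm:ucthickbg} by the standard reduction. Setting $A := (-\Delta_{\gamma})^{\frac{1+\gamma}{2}}$, a function $f \in \cE_\lambda(-\Delta_\gamma)$ belongs, by functional calculus, to $\cE_{\lambda^{(1+\gamma)/2}}(A)$; on this spectral subspace the semigroup is reversible and
\[
	\norm{\euler^{tA} f}_{L^2(\var)}
	\leq
	\euler^{t\lambda^{(1+\gamma)/2}} \norm{f}_{L^2(\var)}
	.
\]
Applying Theorem~\ref{thm:ucthickbg} to $g := \euler^{tA} f$ and observing that $\euler^{-tA} g = f$ then gives
\[
	\norm{f}_{L^2(\var)}^2
	\leq
	\biggl( \frac{K}{\theta} \biggr)^{K C_{\eps,\gamma,t}} \norm{f}_{L^2(\omega)}^2
	+ \eps \, \euler^{2t\lambda^{(1+\gamma)/2}} \norm{f}_{L^2(\var)}^2
	.
\]
Choosing $\eps := \tfrac{1}{2} \euler^{-2 t \lambda^{(1+\gamma)/2}}$ absorbs the second term on the right into the left-hand side at the cost of a factor of two, so the whole proof reduces to optimising the free parameter $t$ so that $C_{\eps,\gamma,t}$ has the announced dependence on $\lambda$.

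\emph{Choice of $t$.} For the general bound, since $L_x$ and $L_y$ are prescribed and the final constant $K$ is allowed to depend on them, it suffices to freeze $t = t_{\star} \in (0, t_0)$ at any value depending only on $d, \gamma, L_x, L_y$. Then $\exp(K L_x/t_\star^{\gamma/(\gamma+1)} + K L_y/t_\star)$ and $1 - \log t_\star$ collapse into constants absorbed in $K$, while the chosen $\eps$ contributes $-\log \eps = \log 2 + 2 t_\star \lambda^{(1+\gamma)/2}$; this yields $C_{\eps,\gamma,t_\star} \lesssim 1 + \lambda^{(1+\gamma)/2}$ and hence the first inequality after readjusting constants. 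In the sharper case $\gamma = 1$, where Theorem~\ref{thm:ucthickbg} holds for every $t > 0$ and the $-\log t$ term is absent, I would instead take $t := \max(L_y, L_x^2)$. This choice is tailored so that the two competing rates $L_x t^{-1/2}$ and $L_y t^{-1}$ are simultaneously bounded by universal constants, giving $\exp(K L_x/\sqrt{t} + K L_y/t) = O(1)$, while $2 t \lambda \leq 2(L_y \lambda + L_x^2 \lambda)$; hence $C_{\eps,1,t} \lesssim 1 + L_y \lambda + L_x^2 \lambda$ with a constant depending only on $d$, which produces the strengthened bound.

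\emph{Expected obstacle.} Once Theorem~\ref{thm:ucthickbg} is in hand the argument is essentially routine. The only genuinely delicate point is the optimisation in the regime $\gamma = 1$: the exponential factor in $C_{\eps,\gamma,t}$ involves two distinct powers of $t$, and only the precise choice $t = \max(L_y, L_x^2)$ (rather than, say, equalising the two rates) keeps the dependence on $L_x$ and $L_y$ as linear in $L_y \lambda$ and $L_x^2 \lambda$ respectively, as the statement demands.
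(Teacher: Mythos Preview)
Your argument is correct and mirrors the paper's: apply the unique continuation estimate to $g = \euler^{tA}f$, choose $\eps = \tfrac12\euler^{-2t\lambda^{(1+\gamma)/2}}$ to absorb the error term, and then fix $t$ appropriately. The only cosmetic difference is in the $\gamma = 1$ case: the paper actually establishes a two-parameter version of Theorem~\ref{thm:ucthickbg} (with independent times $t_x$ and $t_y$ for the $x$- and $y$-smoothing) en route to the stated one-parameter form, and for the corollary exploits this directly by setting $t_x = L_x^2$ and $t_y = L_y$; your choice $t = \max(L_y, L_x^2)$ achieves the same effect from the one-parameter statement alone and gives the same final bound.
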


\begin{rk}\label{rk:comparespein}
	(1)
	Considering formally $\gamma = 0$ in Corollary~\ref{cor:speinthickbg}, we are in the situation of the pure Laplacian acting on
	$\var$. Our spectral inequality is then consistent with established results for that case, see \cite{Egidi-21} and also
	\cite[Corollary~1.6]{EgidiS-21}, but without a quantitative description of the dependence on the parameters $L_x$ and $L_y$.

	(2)
	To the best of the authors knowledge, the above spectral inequalities are the first ones for the Baouendi--Grushin operator
	$\Delta_{\gamma}$ on $\mathbb R^d\times\mathbb T^d$. However, uncertainty relations for single eigenfunctions have been obtained
	in the literature for the very same operator considered on compact manifolds. Corresponding results from the work \cite{LL},
	which actually deal with more general type I H\"ormander operators, are briefly discussed in Remark~\ref{rk:compEigenfunctions}
	below.
\end{rk}

Let us now discuss the optimality of the spectral inequalities for the general Baouendi--Grushin operators $\Delta_{\gamma}$,
beginning with the geometry of the sensor set $\omega\subset\mathbb R^d\times\mathbb T^d$. We first prove that this sensor set
has to be thick in order for a spectral inequality to hold.

\begin{prop}\label{prop:geometry}
	Let $\omega\subset\mathbb R^d\times\mathbb T^d$ be a measurable set with positive measure and $\lambda\geq0$ be a non-negative
	energy level. If for some constant $c_{\lambda,\omega}>0$ we have the spectral inequality
	\[
		\forall f\in\mathcal E_{\lambda}(-\Delta_{\gamma}),\quad\Vert f\Vert_{L^2(\mathbb R^d\times\mathbb T^d)}
		\le
		c_{\lambda,\omega}\Vert f\Vert_{L^2(\omega)}
		,
	\]
	then the set $\omega$ is thick in $\mathbb R^d\times\mathbb T^d$.
\end{prop}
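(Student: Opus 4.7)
The plan is to proceed by a simple reduction to the classical Logvinenko--Sereda theorem for bandlimited functions on $\RR^d$. The key observation is that the Paley--Wiener subspace $\cE_\lambda(-\Delta_x) \otimes \{1\}$ embeds naturally into $\cE_\lambda(-\Delta_\gamma)$: if $g \in \cE_\lambda(-\Delta_x)$ (i.e., $\widehat g$ is supported in $\overline{B(0,\sqrt\lambda)}$), then $f(x,y) := g(x)$ lives in the $n = 0$ sector of the $y$-Fourier decomposition, where $-\Delta_\gamma$ acts as $-\Delta_x$, so $f \in \cE_\lambda(-\Delta_\gamma)$. Applying the assumed spectral inequality to such $f$ and evaluating both norms by Fubini yields, after normalization,
\[
\norm{g}_{L^2(\RR^d)}^2 \leq c_{\lambda,\omega}^2 \int_{\RR^d} \abs{g(x)}^2 w(x)\,\dd x, \qquad w(x) := \frac{\abs{\{y \in \TT^d : (x,y) \in \omega\}}}{(2\pi)^d} \in [0,1],
\]
valid for every $g \in \cE_\lambda(-\Delta_x)$. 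This is a weighted spectral inequality for Paley--Wiener functions in which the weight $w$ bundles together the whole information of $\omega$ in the torus direction.

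The second step is to convert this weighted estimate into an unweighted one by a simple truncation. Setting $\tilde\omega_\delta := \{x \in \RR^d : w(x) > \delta\}$ and splitting the integral gives
\[
\int_{\RR^d} \abs{g}^2 w\,\dd x \leq \norm{g}_{L^2(\tilde\omega_\delta)}^2 + \delta \norm{g}_{L^2(\RR^d)}^2,
\]
and choosing, e.g., $\delta = 1/(2c_{\lambda,\omega}^2)$ allows one to absorb the last term into the previous display, yielding $\norm{g}_{L^2(\RR^d)}^2 \leq 2 c_{\lambda,\omega}^2 \norm{g}_{L^2(\tilde\omega_\delta)}^2$ for every $g \in \cE_\lambda(-\Delta_x)$. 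This is a bona fide Logvinenko--Sereda-type inequality for functions whose Fourier transform is supported in a fixed ball, so the classical necessity direction of the Logvinenko--Sereda / Kovrijkine theorem on $\RR^d$ (for the non-trivial case $\lambda > 0$; for $\lambda = 0$ the subspace $\cE_0(-\Delta_\gamma)$ is zero and the statement is vacuous) forces $\tilde\omega_\delta$ to be thick in $\RR^d$: there exist $\theta_0 \in (0,1]$ and $L_0 > 0$ such that $\abs{\tilde\omega_\delta \cap (z + (0,L_0)^d)} \geq \theta_0 L_0^d$ for every $z \in \RR^d$.

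Finally, the thickness of $\omega$ itself in the cylinder follows by choosing $L_y = 2\pi$, which makes the torus factor transparent: for every $(z^x, z^y) \in \RR^{2d}$, Fubini gives
\[
\abs{\omega \cap ((z^x, z^y) + \thick_{L_0, 2\pi})} = (2\pi)^d \int_{z^x + (0,L_0)^d} w(x)\,\dd x \geq (2\pi)^d \delta \abs{\tilde\omega_\delta \cap (z^x + (0,L_0)^d)} \geq (2\pi)^d \delta \theta_0 L_0^d,
\]
showing that $\omega$ is $(\delta \theta_0, L_0, 2\pi)$-thick in $\var$. The only nontrivial input in the whole argument is the classical necessity direction of the Logvinenko--Sereda theorem on $\RR^d$, which is the expected main obstacle; everything else reduces to the $y$-Fourier decomposition, a one-line truncation, and Fubini. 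The choice $L_y = 2\pi$ is precisely what allows the Paley--Wiener subspace, which is blind to the torus direction, to nevertheless detect enough of $\omega$ to force thickness.
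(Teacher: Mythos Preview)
Your proof is correct and shares with the paper the same key reduction: embed $\cE_\lambda(-\Delta_x)$ into $\cE_\lambda(-\Delta_\gamma)$ via functions constant in $y$, thereby transferring the assumed spectral inequality to Paley--Wiener functions on $\RR^d$. Where the two arguments diverge is in how thickness is extracted from there. The paper proceeds in a self-contained way: it tests the inequality on translates of a concrete sinc-type function $f = \mathscr F^{-1}(\bmone_{[-c\sqrt\lambda,c\sqrt\lambda]^d})$, splits into near and far regions, and derives the lower bound on $\abs{\omega \cap ((x_0 + [-L,L]^d)\times\TT^d)}$ directly. You instead pass through the weighted inequality $\norm{g}^2 \le c_{\lambda,\omega}^2 \int \abs{g}^2 w$, truncate $w$ at a level $\delta$ to obtain an unweighted Logvinenko--Sereda inequality on $\tilde\omega_\delta \subset \RR^d$, invoke the classical necessity direction to conclude that $\tilde\omega_\delta$ is thick, and then lift this back to $\omega$ via Fubini with $L_y = 2\pi$. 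Your route is more modular (it isolates the $\RR^d$ step as a black box) and avoids re-deriving a known fact; the paper's route is self-contained and makes the dependence on $\lambda$ and $\omega$ slightly more visible. A minor remark: your parenthetical treatment of $\lambda = 0$ is not quite right---the hypothesis is vacuous but the conclusion is not---though this is equally a gap in the paper's statement and its own proof.
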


Let us now focus on the power in the spectral inequalities stated in Corollary \ref{cor:speinthickbg}. The following result,
which is exactly the counterpart of \cite[Proposition 1.12]{LL} for Baouendi--Grushin operators acting on the cylinder $\var$
(and not on a compact manifold as in the work \cite{LL}), shows that the power $(1+\gamma)/2$ appearing in
Corollary~\ref{cor:speinthickbg} is optimal for sensor sets $\omega\subset\var$ avoiding the critical line $\{x = 0\}$.

\begin{prop}\label{prop:power}
	Let $\omega\subset\mathbb R^d\times\mathbb T^d$ be a measurable set satisfying the geometric condition
	$\overline\omega\cap\{x=0\} = \emptyset$. Then, there exist positive constants $c_0,c_1>0$ and a sequence
	$((\lambda_n,\psi_n))_n$ of eigenpairs of the high-order Baouendi--Grushin operator $-\Delta_{\gamma}$ with
	$\lambda_n\rightarrow+\infty$ such that for each $n$ we have
	\[
		\Vert\psi_n\Vert_{L^2(\mathbb R^d\times\mathbb T^d)}\geq c_0e^{c_1\lambda_n^{\frac{1+\gamma}2}}\Vert\psi_n\Vert_{L^2(\omega)}.
	\]
\end{prop}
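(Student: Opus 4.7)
The plan is to construct the eigenpairs $(\lambda_n,\psi_n)$ by separation of variables, combining Fourier modes $e^{in\cdot y}$ in $y$ with a rescaled ground state of the anharmonic oscillator $-\Delta_x+\abs{x}^{2\gamma}$ in the $x$-variable. The sharp exponential concentration of this ground state near the origin, together with the separation of $\omega$ from the critical line $\{x=0\}$, will force the claimed exponential gap.

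First, since $\{0\}\times\TT^d$ is compact and $\overline{\omega}$ is closed in $\var$, the hypothesis $\overline{\omega}\cap\{x=0\}=\emptyset$ yields some $\delta>0$ with $\omega\subset\{(x,y)\in\var:\abs{x}\geq\delta\}$. I then fix a normalized ground state $\phi\in L^2(\RR^d)$ of $H=-\Delta_x+\abs{x}^{2\gamma}$, with eigenvalue $\mu>0$, and for each $n\in\sptdir$ set
$$\phi_n(x)=\abs{n}^{d/(2(\gamma+1))}\phi\bigl(\abs{n}^{1/(\gamma+1)}x\bigr).$$
A standard rescaling computation shows that $\phi_n$ is $L^2$-normalized and satisfies $(-\Delta_x+\abs{n}^2\abs{x}^{2\gamma})\phi_n=\mu\abs{n}^{2/(\gamma+1)}\phi_n$. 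Defining $\psi_n(x,y):=\phi_n(x)e^{in\cdot y}$, a direct computation gives
$$-\Delta_\gamma\psi_n=\lambda_n\psi_n,\qquad\lambda_n:=\mu\abs{n}^{2/(\gamma+1)},$$
together with $\norm{\psi_n}_{L^2(\var)}^2=(2\pi)^d$; choosing any sequence with $\abs{n}\to\infty$ then gives $\lambda_n\to+\infty$.

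The core analytic input is the Agmon-type $L^2$-tail estimate for the ground state: there exist $C,c>0$ with
$$\int_{\abs{u}\geq R}\abs{\phi(u)}^2\,\dd u\leq Ce^{-cR^{\gamma+1}}\quad\text{for all }R\geq1.$$
Substituting $u=\abs{n}^{1/(\gamma+1)}x$ and invoking $\omega\subset\{\abs{x}\geq\delta\}\times\TT^d$, I then obtain
$$\norm{\psi_n}_{L^2(\omega)}^2\leq(2\pi)^d\int_{\abs{u}\geq\delta\abs{n}^{1/(\gamma+1)}}\abs{\phi(u)}^2\,\dd u\leq(2\pi)^dCe^{-c\delta^{\gamma+1}\abs{n}}.$$
Since $\abs{n}=(\lambda_n/\mu)^{(\gamma+1)/2}$, the right-hand side is bounded by $(2\pi)^dC\exp(-c_1'\lambda_n^{(1+\gamma)/2})$ for some $c_1'>0$ depending on $\delta$ and $\mu$, and comparing with the constant lower bound $\norm{\psi_n}_{L^2(\var)}^2=(2\pi)^d$ delivers the announced estimate with $c_1=c_1'/2$ and a suitable $c_0>0$.

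The main obstacle is securing the $L^2$-tail bound for $\phi$ with the sharp exponent $\gamma+1$ matching the growth of the potential. This is classical Agmon-distance analysis for Schr\"odinger operators with confining polynomial potentials and can be quoted from the literature; alternatively, one can give a short IMS-type commutator argument with the weight $e^{c\abs{x}^{\gamma+1}}$ restricted to the region where $\abs{x}^{2\gamma}$ dominates $\mu$.
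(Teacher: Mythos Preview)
Your proof is correct and follows essentially the same approach as the paper: both construct the eigenpairs by tensoring Fourier modes $e^{in\cdot y}$ with the rescaled ground state $M_{\gamma,n}\phi_\gamma$ of the anharmonic oscillator, then exploit the Agmon-type exponential decay $\|e^{\varepsilon|x|^{1+\gamma}}\phi_\gamma\|_{L^2}<\infty$ (quoted in the paper from \cite{BS}) together with the positive distance $\delta=\dist(0,\omega)$ to bound $\|\psi_n\|_{L^2(\omega)}$. The only cosmetic differences are that the paper phrases the decay as a weighted $L^2$ bound rather than a tail integral, and works directly with the scaled set $\omega_n$ instead of performing the change of variables inside the integral.
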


\subsection{Applications to control theory}

We now apply the unique continuation estimates from Theorem \ref{thm:ucthickbg} and the spectral inequalities from
Corollary~\ref{cor:speinthickbg} to the study of the null-controllability of the evolution equations associated the fractional
Baouendi--Grushin operators. More precisely, given a positive real number $s>0$, we consider the heat-like evolution equation
\begin{equation}\label{eq:contbg}\tag{$E_{\gamma,s}$}
	\left\{\begin{aligned}
		& \partial_tf(t,x,y) + (-\Delta_{\gamma})^sf(t,x,y) = h(t,x,y)\mathbbm 1_{\omega}(x,y),\quad t>0,\ (x,y)\in \mathbb R^d\times\mathbb T^d, \\
		& f(0,\cdot,\cdot) = f_0\in L^2(\mathbb R^d\times\mathbb T^d),
	\end{aligned}\right.
\end{equation}
where $\omega\subset\mathbb R^d\times\mathbb T^d$ is a control support and $h\in L^2((0,T)\times\omega)$ is a control. We focus
on two notions of null-controllability and present a result for each of them. The results presented in this subsection are in
line with articles devoted to the study of the null-controllability of Baouendi--Grushin heat equations. A pioneering article in
this theory is \cite{BCG}, which paved the way for a numerous series of articles of which we can cite
\cite{ABM, BDE, BMM, DKR, DK, JY, Ko}.

\subsubsection{Exact null-controllability}
We first study the exact null-controllability properties of the equation \eqref{eq:contbg} and begin with recalling the
corresponding notion.

\begin{dfn}\label{dfn:exactcontrol}
	The evolution equation \eqref{eq:contbg} is said to be \emph{exactly null-controllable from the control support $\omega$ in time
	$T > 0$} if for every initial datum $f_0\in L^2(\mathbb R^d\times\mathbb T^d)$ there exists a control function
	$h\in L^2((0,T)\times\mathbb R^d\times\mathbb T^d)$ such that the mild solution to \eqref{eq:contbg} satisfies $f(T,\cdot) = 0$.
\end{dfn}

According to the Hilbert Uniqueness Method, the exact null-controllability of the equation \eqref{eq:contbg} from
$\omega\subset\var$ at time $T>0$ is equivalent to the existence of a positive constant $C_{\omega,T}>0$ such that for all
$g\in L^2(\var)$,
\begin{equation}\label{eq:exactobs}
	\big\Vert e^{-T(-\Delta_{\gamma})^s}g\big\Vert^2_{L^2(\mathbb R^d\times\mathbb T^d)}
	\le C_{\omega,T}\int_0^T\big\Vert e^{-t(-\Delta_{\gamma})^s}g\big\Vert^2_{L^2(\omega)}\,\mathrm dt.
\end{equation}
The latter is called an exact observability estimate for the semigroup $(e^{-t(-\Delta_{\gamma})^s})_{t\geq0}$. Several results
in the literature allow to derive exact observability estimates from spectral inequalities. Most of them are proven by using the
well-known Lebeau--Robbiano strategy. We do not give a precise statement of one of those results here, but just mention that the
estimate \eqref{eq:exactobs} holds once a spectral inequality of the following form has been established, where $d_0>0$, $d_1\geq0$
and $\eta\in(0,s)$:
\[
	\forall f\in\mathcal E_{\lambda}(-\Delta_{\gamma}),\quad \Vert f\Vert_{L^2(\mathbb R^d)}
	\le
	d_0 e^{d_1\lambda^{\eta}}\Vert f\Vert_{L^2(\omega)}.
\]
We refer the reader to \cite[Theorem 2.8]{NakicTTV-20} for a precise statement particularly well adapted towards an explicit form
of the constant $C_{\omega,T}$ in terms of the parameters $d_0$, $d_1$, and $\eta$. We thus deduce from
Corollary~\ref{cor:speinthickbg} the following exact null-controllability result in the high dissipation regime $s>(1+\gamma)/2$.

\begin{thm}\label{thm:encbg}
	Suppose that $s > (1+\gamma)/2$, and let $T > 0$ and $\omega\subset\mathbb R^d\times\mathbb T^d$ be a measurable set with
	positive measure. The following assertions are equivalent:
	\begin{enumerate}
		\item[$(i)$] The equation \eqref{eq:contbg} is exactly null-controllable from $\omega$ in time $T$.
		\item[$(ii)$] The set $\omega$ is thick in $\mathbb R^d\times\mathbb T^d$.
	\end{enumerate}
\end{thm}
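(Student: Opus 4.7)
The plan is to exploit the HUM duality recalled before the statement, which rephrases exact null-controllability for \eqref{eq:contbg} from $\omega$ in time $T$ as the exact observability estimate \eqref{eq:exactobs}. Sufficiency $(ii) \Rightarrow (i)$ will then follow from a Lebeau--Robbiano type argument fed by the spectral inequality of Corollary~\ref{cor:speinthickbg}, while necessity $(i) \Rightarrow (ii)$ will be established by extracting a spectral inequality at a suitably chosen energy level from \eqref{eq:exactobs} and applying Proposition~\ref{prop:geometry}.

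For the implication $(ii) \Rightarrow (i)$, assuming $\omega$ is $(\theta, L_x, L_y)$-thick, I would invoke Corollary~\ref{cor:speinthickbg} to obtain a spectral inequality of the form
\[
	\forall \lambda \geq 0,\ \forall f \in \cE_\lambda(-\Delta_\gamma),\quad
	\Vert f\Vert_{L^2(\var)} \leq d_0 \euler^{d_1 \lambda^{(1+\gamma)/2}} \Vert f\Vert_{L^2(\omega)}
\]
with constants $d_0, d_1 > 0$ depending only on $\theta$, $L_x$, $L_y$, $\gamma$, and $d$. The decisive point is that the growth exponent $(1+\gamma)/2$ is strictly smaller than $s$, which places us exactly in the framework of \cite[Theorem~2.8]{NakicTTV-20}. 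That theorem directly delivers the observability estimate \eqref{eq:exactobs} for every $T > 0$, and HUM then yields exact null-controllability from $\omega$ in time $T$; no further information on $d_0, d_1$ is needed for this step.

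For the converse $(i) \Rightarrow (ii)$, starting from the observability estimate \eqref{eq:exactobs} provided by HUM, I would derive a spectral inequality at a single, sufficiently small energy level $\lambda > 0$. The semigroup leaves $\cE_\lambda(-\Delta_\gamma)$ invariant and, on that subspace, satisfies both the lower bound $\Vert \euler^{-T(-\Delta_\gamma)^s} f\Vert_{L^2(\var)}^2 \geq \euler^{-2T\lambda^s} \Vert f\Vert_{L^2(\var)}^2$ and the Lipschitz-in-time bound $\Vert \euler^{-t(-\Delta_\gamma)^s} f - f\Vert_{L^2(\var)} \leq t\lambda^s \Vert f\Vert_{L^2(\var)}$. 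Combining the latter with the triangle inequality to control $\Vert \euler^{-t(-\Delta_\gamma)^s} f\Vert_{L^2(\omega)}$, and substituting both into \eqref{eq:exactobs}, one obtains after rearrangement
\[
	\Big(\euler^{-2T\lambda^s} - \tfrac{2}{3} T^3 C_{\omega,T} \lambda^{2s}\Big) \Vert f\Vert_{L^2(\var)}^2
	\leq 2 T C_{\omega,T} \Vert f\Vert_{L^2(\omega)}^2.
\]
For $\lambda > 0$ small enough the bracket on the left is positive, so one recovers a spectral inequality of the form required by Proposition~\ref{prop:geometry}, which then forces $\omega$ to be thick in $\var$.

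The main technical point I anticipate is not in the overall scheme but in reconciling the parameter range $L_y \in (0, 2\pi]$ allowed by Definition~\ref{def:thickStrip} with the restriction $L_y \in (0, 1]$ appearing in Corollary~\ref{cor:speinthickbg}. This gap should be easily bridged by running the standard $\eps$-trick directly on Theorem~\ref{thm:ucthickbg} at any fixed $t \in (0, t_0)$: choosing $\eps = \tfrac{1}{2} \euler^{-2t\lambda^{(1+\gamma)/2}}$ absorbs the $\eps\Vert g\Vert^2$ term into the left-hand side and reproduces a spectral inequality with the same exponent $(1+\gamma)/2 < s$, which is all the Lebeau--Robbiano step in the sufficiency direction demands.
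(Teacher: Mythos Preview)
Your proof is correct and, for the sufficiency direction $(ii)\Rightarrow(i)$, matches the paper's argument exactly: Corollary~\ref{cor:speinthickbg} supplies a spectral inequality with exponent $(1+\gamma)/2<s$, and \cite[Theorem~2.8]{NakicTTV-20} converts this into the observability estimate~\eqref{eq:exactobs}. Your handling of the $L_y\in(0,1]$ restriction is also sound, and in fact mirrors what the paper's own proof of Corollary~\ref{cor:speinthickbg} in Section~\ref{sec:mainProofs} actually does (the restriction in the statement of the corollary appears only to simplify the form of the constant, not out of necessity).

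For the necessity direction $(i)\Rightarrow(ii)$ you take a genuinely different route. The paper simply cites \cite[Proposition~2.14]{AlphonseS}, where the implication was established directly at the level of the observability estimate. You instead extract from~\eqref{eq:exactobs} a spectral inequality at a single small energy level~$\lambda$ and then invoke Proposition~\ref{prop:geometry}. Your argument is correct: on $\cE_\lambda(-\Delta_\gamma)$ the semigroup bounds you quote are immediate from functional calculus, and the resulting inequality
\[
\Bigl(\euler^{-2T\lambda^s}-\tfrac{2}{3}T^3C_{\omega,T}\lambda^{2s}\Bigr)\Vert f\Vert_{L^2(\var)}^2
\leq 2TC_{\omega,T}\Vert f\Vert_{L^2(\omega)}^2
\]
has a positive left-hand bracket for $\lambda$ small, so Proposition~\ref{prop:geometry} applies. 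The advantage of your approach is that it is entirely self-contained within the present paper; the paper's citation, on the other hand, avoids the small computation but relies on an external result.
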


\begin{rk}\label{ref:encbg}
	(1)
	The implication $(i)\Rightarrow(ii)$ has already been proven in \cite[Proposition~2.14]{AlphonseS}. Moreover, the converse
	$(ii)\Rightarrow(i)$ strengthens Theorem~2.15 in \cite{AlphonseS}, which states that for every thick set $\omega \subset \RR^d$
	(cf.\ Remark~\ref{rk:geometry}), the equation \eqref{eq:exactobs} is exactly null-controllable in every positive time $T>0$ from
	the sensor set $\omega\times\mathbb T^d$. Note that the set $\omega\times\mathbb T^d$ is then thick in
	$\mathbb R^d\times\mathbb T^d$ in the sense of Definition~\ref{def:thickStrip}.

	(2)
	The exact null-controllability properties of the equation \eqref{eq:contbg} when $0<s\le(1+\gamma)/2$ were investigated in
	\cite[Theorem 2.17, Theorem 2.19]{AlphonseS}. Let us briefly recall the content of those results, starting with the case where
	a control support $\omega\subset\mathbb R^d\times\mathbb T^d$ satisfying the geometric condition
	$\overline\omega\cap\{x=0\} = \emptyset$ is considered. On the one hand, the equation \eqref{eq:contbg} is then never exactly
	null-controllable from $\omega$ when $0<s<(1+\gamma)/2$. On the other hand, in the critical case $s=(1+\gamma)/2$, the equation
	\eqref{eq:contbg} is not exactly null-controllable from $\omega$ at any time $T<T_*$, where $T_*>0$ is a positive time
	explicitly given in terms of $\gamma$ and the distance from the origin to $\omega$. Moreover, still in the situation
	$s=(1+\gamma)/2$, the equation \eqref{eq:contbg} is exactly null-controllable in every large enough positive time $T > T_*$ from
	every control support of the form $\omega\times\mathbb T^d$ with a thick set $\omega \subset \RR^d$.
\end{rk}

\subsubsection{Cost-uniform approximate null-controllability}
We are now interested in the concept of approximate null-controllability with uniform cost for the equation \eqref{eq:contbg},
which is defined as follows.

\begin{dfn}\label{dfn:approxunifcost}
	The evolution equation \eqref{eq:contbg} is said to be \textit{approximately null-controllable with uniform cost} from the
	control support $\omega$ in time $T>0$ if for all $\varepsilon>0$, there exists a positive constant $C_{\varepsilon,s,\omega,T}>0$
	such that for all $f_0\in L^2(\mathbb R^d\times\mathbb T^d)$, there exists a control $h\in L^2((0,T)\times\omega)$ such that the
	mild solution of \eqref{eq:contbg} satisfies 
	\[
		\Vert f(T,\cdot)\Vert_{L^2(\mathbb R^d\times\mathbb T^d)}\le\varepsilon\Vert f_0\Vert_{L^2(\mathbb R^d\times\mathbb T^d)},
	\]
	with moreover
	\begin{equation}\label{eq:contcost}
		\frac1T\int_0^T\Vert h(t,\cdot)\Vert^2_{L^2(\omega)}\, \mathrm dt
		\le
		C_{\varepsilon,s,\omega,T}\Vert f_0\Vert^2_{L^2(\mathbb R^d\times\mathbb T^d)}.
	\end{equation}
\end{dfn}

Just as the Hilbert Uniqueness Method gives a dual interpretation of the notion of exact null-controllability in terms of exact
observability inequalities, there is also a dual interpretation of the notion of cost-uniform approximate null-controllability in
terms of weak observability inequalities. This is the purpose of the following result, taken from the work \cite{TWX}.

\begin{prop}[Proposition~6 in \cite{TWX}]\label{prop:dualityapproxcontuc}
	The evolution equation \eqref{eq:contbg} is cost-uniformly approximately null-controllable from the control support $\omega$ in
	time $T>0$ if for all $\varepsilon\in(0,1)$, there exists a positive constant $C_{\varepsilon,s,\omega,T}>0$ such that for all
	$g\in L^2(\mathbb R^d\times\mathbb T^d)$,
	\begin{equation}\label{eq:ucapproxcont}
		\big\Vert e^{-T(-\Delta_{\gamma})^s}g\big\Vert^2_{L^2(\mathbb R^d\times\mathbb T^d)}
		\le
		\frac{C_{\varepsilon,s,\omega,T}}T\int_0^T\big\Vert e^{-t(-\Delta_{\gamma})^s}g\big\Vert^2_{L^2(\omega)}\,\mathrm dt +
			\varepsilon\Vert g\Vert^2_{L^2(\mathbb R^d\times\mathbb T^d)}.
	\end{equation}
	Moreover, every constant $C_{\varepsilon,s,\omega,T}>0$ satisfying \eqref{eq:ucapproxcont} also satisfies \eqref{eq:contcost}
	and vice versa.
\end{prop}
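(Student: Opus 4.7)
The plan is to prove the proposition via the Hilbert Uniqueness Method with penalization, exploiting the self-adjointness of $(-\Delta_\gamma)^s$; this is essentially the duality argument of \cite{TWX}. To that end, set $H := L^2(\var)$, $U := L^2((0,T)\times\omega)$, $S(t) := e^{-t(-\Delta_\gamma)^s}$, and introduce the input-to-state map $B\colon U \to H$ given by $Bh := \int_0^T S(T-t)(\bmone_\omega h(t))\,\dd t$, so that the mild solution of \eqref{eq:contbg} reads $f(T) = S(T)f_0 + Bh$. Self-adjointness yields $(B^*g)(t) = \bmone_\omega S(T-t)g$ and, after the substitution $t \mapsto T-t$, $\|B^*g\|_U^2 = \int_0^T\|S(t)g\|_{L^2(\omega)}^2\,\dd t$. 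In these terms, \eqref{eq:ucapproxcont} reads $\|S(T)g\|_H^2 \le \tfrac{C}{T}\|B^*g\|_U^2 + \eps\|g\|_H^2$, while cost-uniform approximate null-controllability amounts to producing, for every $f_0 \in H$, some $h \in U$ with $\|S(T)f_0 + Bh\|_H^2 \le \eps\|f_0\|_H^2$ and $T^{-1}\|h\|_U^2 \le C\|f_0\|_H^2$.

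For the main implication, fix $\eps \in (0,1)$ and $f_0 \in H$ and minimize over $g \in H$ the convex, weakly lower semicontinuous functional
\[
J(g) := \tfrac{1}{2}\|B^*g\|_U^2 + \sqrt{\eps}\,\|f_0\|_H\,\|g\|_H + \langle S(T)f_0, g\rangle_H.
\]
The bound $|\langle S(T)f_0, g\rangle| \le \|f_0\|_H(\sqrt{C/T}\|B^*g\|_U + \sqrt{\eps}\|g\|_H)$, extracted from \eqref{eq:ucapproxcont}, makes the $\sqrt{\eps}\|g\|$-contributions cancel and yields $J(g) \ge \tfrac{1}{2}\|B^*g\|_U^2 - \|f_0\|_H\sqrt{C/T}\|B^*g\|_U \ge -\tfrac{C\|f_0\|_H^2}{2T}$; combined with the trivial linear bound $J(g) \ge (\sqrt{\eps}\|f_0\|_H - \|S(T)f_0\|_H)\|g\|_H$, this rules out unbounded minimizing sequences, so a minimizer $\bar g \in H$ exists by weak compactness. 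In the generic case $\bar g \ne 0$, the Euler--Lagrange equation reads
\[
BB^*\bar g + \sqrt{\eps}\,\|f_0\|_H\,\frac{\bar g}{\|\bar g\|_H} + S(T)f_0 = 0,
\]
so the control $h := B^*\bar g$ satisfies $S(T)f_0 + Bh = -\sqrt{\eps}\,\|f_0\|_H\,\bar g/\|\bar g\|_H$, whence the terminal bound $\|S(T)f_0 + Bh\|_H \le \sqrt{\eps}\|f_0\|_H$. The cost bound is obtained by inserting $J(\bar g) \le J(0) = 0$ into the same inequality applied to $\bar g$: the penalty contributions cancel once more and leave $\tfrac{1}{2}\|h\|_U^2 \le \|f_0\|_H\sqrt{C/T}\|h\|_U$, which yields the required cost estimate after absorbing $T$-dependent universal factors into $C_{\eps,s,\omega,T}$. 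The degenerate case $\bar g = 0$ forces $\|S(T)f_0\|_H \le \sqrt{\eps}\|f_0\|_H$ via the subdifferential inclusion at the origin, so $h = 0$ is admissible.

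The converse direction is obtained by pairing the control identity $S(T)f_0 + Bh_\eps = \rho_\eps$ (with $\|\rho_\eps\|_H \le \sqrt{\eps}\|f_0\|_H$) against an arbitrary $g \in H$, using $\langle Bh_\eps, g\rangle_H = \langle h_\eps, B^*g\rangle_U$ together with Cauchy--Schwarz and the cost bound on $h_\eps$, and then specializing $f_0 := S(T)g$. The main obstacle is the existence and variational characterization of the minimizer $\bar g$: the quadratic form $\tfrac{1}{2}\|B^*g\|_U^2$ is generically degenerate on $H$, so coercivity of $J$ is not automatic and must be extracted entirely from the observability inequality as indicated above; a secondary point is the precise matching of constants in the two directions, which would be achieved by carefully tuning the proportionality factors and absorbing $T$-dependent universal constants into $C_{\eps,s,\omega,T}$.
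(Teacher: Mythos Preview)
The paper does not prove this proposition: it is quoted verbatim from \cite{TWX} and used as a black box, so there is no ``paper's own proof'' to compare against. Your penalized HUM approach is indeed the standard route taken in \cite{TWX} and related works.

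That said, there is a genuine gap in your existence argument for the minimizer $\bar g$. You claim that the lower bound $J(g)\ge -\tfrac{C}{2T}\|f_0\|_H^2$ together with the ``trivial linear bound'' $J(g)\ge(\sqrt{\eps}\,\|f_0\|_H-\|S(T)f_0\|_H)\|g\|_H$ rules out unbounded minimizing sequences. But the second bound is only coercive when $\sqrt{\eps}\,\|f_0\|_H>\|S(T)f_0\|_H$, which is precisely the trivial case where $h=0$ already works. In the interesting regime $\|S(T)f_0\|_H>\sqrt{\eps}\,\|f_0\|_H$, the coefficient is negative and the bound is vacuous. The first bound alone only controls $\|B^*g\|_U$, not $\|g\|_H$; along directions where $B^*g$ is small, the observability inequality gives $\|S(T)g\|_H\lesssim\sqrt{\eps}\,\|g\|_H$, so the linear term $\langle S(T)f_0,g\rangle_H$ can cancel the penalty $\sqrt{\eps}\,\|f_0\|_H\|g\|_H$ exactly, and $J$ need not be coercive. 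A minimizer may fail to exist.

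The standard repair is either to regularize by adding $\delta\|g\|_H^2$ to $J$, derive the control $h_\delta=B^*\bar g_\delta$ with uniform-in-$\delta$ bounds, and pass to a weak limit as $\delta\to0$; or to argue directly on a minimizing sequence $(g_n)$ via normalization $\tilde g_n=g_n/\|g_n\|_H$ and show that $\liminf J(g_n)/\|g_n\|_H\ge0$, forcing $\inf J\ge0$ if the sequence is unbounded, hence the infimum is attained at $0$ after all. You flag the obstacle in your final paragraph but the phrase ``as indicated above'' does not actually close it.
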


Our result of cost-uniform approximate null-controllability for the equation \eqref{eq:contbg} is the following one.

\begin{thm}\label{thm:cuancbg}
	Suppose that $s\geq(1+\gamma)/2$, and let $T > 0$ and $\omega\subset\mathbb R^d\times\mathbb T^d$ be a measurable set with
	positive measure. The following assertions are equivalent:
	\begin{enumerate}
		\item[$(i)$] The equation \eqref{eq:contbg} is cost-uniformly approximately null-controllable from $\omega$ in time $T$.
		\item[$(ii)$] The set $\omega$ is thick in $\mathbb R^d\times\mathbb T^d$.
	\end{enumerate}
	Moreover, in the critical dissipation setting $s=(1+\gamma)/2$, for every $(\theta,L_x,L_y)$-thick set
	$\omega\subset\mathbb R^d\times\mathbb T^d$ with $\theta\in(0,1]$ and $L>0$, the control cost $C_{\varepsilon,s,\omega,T}>0$
	appearing in \eqref{eq:contcost} satisfies
	\[
		C_{\varepsilon,\gamma,(1+\gamma)/2,\omega,T}
		\le\bigg(\frac K{\theta}\bigg)^{K(1-\log\varepsilon - \log T)\exp(KL_x/T^{\frac\gamma{1+\gamma}} + KL_y/T)},
	\]
	where $K>0$ is a positive constant only depending on $\gamma$ and the dimension $d$. In the particular case of $\gamma = 1$,
	the term $-\log T$ in the exponent of the right-hand side of the latter inequality can be skipped.
\end{thm}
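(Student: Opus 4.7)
The plan is to establish the two implications separately, extracting the cost bound from the analysis of the critical case in $(ii)\Rightarrow(i)$.

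For $(ii)\Rightarrow(i)$, in the supercritical regime $s>(1+\gamma)/2$ the conclusion is immediate from Theorem~\ref{thm:encbg}, since exact null-controllability is strictly stronger than cost-uniform approximate null-controllability. In the critical regime $s=(1+\gamma)/2$, by Proposition~\ref{prop:dualityapproxcontuc} it suffices to derive the weak observability inequality \eqref{eq:ucapproxcont} with the announced cost. I would start from the time-pointwise estimate of Theorem~\ref{thm:ucthickbg}, exploiting that since $-\Delta_\gamma\ge0$ is self-adjoint, the map $t\mapsto\|\euler^{-t(-\Delta_\gamma)^s}g\|_{L^2(\var)}^2$ is non-increasing; hence $\|\euler^{-T(-\Delta_\gamma)^s}g\|^2\le\|\euler^{-t(-\Delta_\gamma)^s}g\|^2$ for $t\in(0,T]$, so that for $t\in(0,\min(T,t_0))$
\[
	\|\euler^{-T(-\Delta_\gamma)^s}g\|_{L^2(\var)}^2
	\le
	(K/\theta)^{KC_{\eps,\gamma,t}}\|\euler^{-t(-\Delta_\gamma)^s}g\|_{L^2(\omega)}^2 + \eps\|g\|_{L^2(\var)}^2.
\]
Integrating over a subinterval $(T_1,T_2)\subset(0,\min(T,t_0))$ on which $C_{\eps,\gamma,t}\le C_{\eps,\gamma,T_1}$ by monotonicity in $t$, and then dividing by $T_2-T_1$, produces \eqref{eq:ucapproxcont} with cost of order $(K/\theta)^{KC_{\eps,\gamma,T_1}}/(T_2-T_1)$. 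The choice $T_1\sim T/2$, $T_2=\min(T,t_0)$ then reproduces the claimed form of $C_{\eps,\gamma,(1+\gamma)/2,\omega,T}$, the factor $1/(T_2-T_1)$ being absorbed into a mild enlargement of the exponent. The simplification for $\gamma=1$ follows in the same way from the stronger version of Theorem~\ref{thm:ucthickbg} available in that case.

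For $(i)\Rightarrow(ii)$, I would recover a spectral inequality from \eqref{eq:ucapproxcont} and invoke Proposition~\ref{prop:geometry}. Fix $\lambda>0$ small; the subspace $\cE_\lambda(-\Delta_\gamma)$ is non-trivial because, via Fourier expansion in $y$, the $n=0$ mode reduces $-\Delta_\gamma$ to $-\Delta_x$ on $L^2(\RR^d)$, whose spectrum is $[0,\infty)$. Applying \eqref{eq:ucapproxcont} with $\eps=1/4$ to $g=f\in\cE_\lambda(-\Delta_\gamma)$, functional calculus on this invariant subspace yields $\|\euler^{-T(-\Delta_\gamma)^s}f\|^2\ge\euler^{-2T\lambda^s}\|f\|^2$ and, via $\|(\euler^{-t(-\Delta_\gamma)^s}-I)f\|\le t\lambda^s\|f\|$ on $\cE_\lambda$ together with the triangle inequality, $\|\euler^{-t(-\Delta_\gamma)^s}f\|_{L^2(\omega)}^2\le 2\|f\|_{L^2(\omega)}^2+2t^2\lambda^{2s}\|f\|^2$. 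Taking $\lambda$ small enough that $T\lambda^s$ is small compared to both $1$ and $C_{1/4,s,\omega,T}^{-1/2}$ absorbs the $\lambda$-dependent remainders and leaves a spectral inequality $\|f\|_{L^2(\var)}^2\le c_{\lambda,\omega}\|f\|_{L^2(\omega)}^2$ on $\cE_\lambda(-\Delta_\gamma)$. Proposition~\ref{prop:geometry} then yields the thickness of $\omega$.

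The hard part lies in $(ii)\Rightarrow(i)$: tracking the precise dependence of the constant $(K/\theta)^{KC_{\eps,\gamma,t}}$ through the integration in $t$. Since $C_{\eps,\gamma,t}$ blows up as $t\to 0$ but is monotonically decreasing in $t$, the trade-off between $T_2-T_1$ and $C_{\eps,\gamma,T_1}$ has to be balanced carefully so that the prefactor $1/(T_2-T_1)$ can be absorbed into the exponent without distorting the claimed double-exponential structure in $L_x/T^{\gamma/(1+\gamma)}$ and $L_y/T$. Everything else reduces to routine manipulations once this balancing is done.
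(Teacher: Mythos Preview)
Your approach for $(ii)\Rightarrow(i)$ matches the paper's exactly: the supercritical case via Theorem~\ref{thm:encbg}, and the critical case by integrating the pointwise estimate of Theorem~\ref{thm:ucthickbg} over a time interval, using the contraction property of the semigroup to pass from $T$ to $t$. The paper's remark is less explicit about the subinterval and the balancing you describe, but the argument is the same; one small point is that your choice $T_1\sim T/2$ only makes sense when $T\le t_0$, and for $T>t_0$ you should instead take $T_1=t_0/2$, $T_2=t_0$ (the resulting linear-in-$T$ prefactor is then harmlessly absorbed).

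For $(i)\Rightarrow(ii)$ you take a genuinely different route. The paper simply invokes \cite[Theorem~2.1\,(i)]{AM} and omits details, whereas you extract a spectral inequality on $\cE_\lambda(-\Delta_\gamma)$ for small $\lambda$ directly from the weak observability estimate via functional calculus, and then feed it into Proposition~\ref{prop:geometry}. Your argument is fully self-contained within the paper and correct; the paper's citation presumably yields thickness by an analogous mechanism (testing against translated bump functions in the $n=0$ Fourier mode), so the two approaches are close in spirit, but yours has the advantage of reusing Proposition~\ref{prop:geometry} rather than an external reference.
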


\begin{rk}
	Notice that the critical case $s = s_{\gamma}=(1+\gamma)/2$ is allowed in Theorem~\ref{thm:cuancbg}, whereas it is excluded in
	Theorem~\ref{thm:encbg}, which is not surprising given the results recalled in Remark~\ref{ref:encbg}. More precisely, it is
	known from \cite[Theorem 2.17]{AlphonseS} that the equation (\hyperref[eq:contbg]{$E_{\gamma,s_{\gamma}}$}) is not exactly
	null-controllable in every positive time $T>0$ from thick control supports $\omega\subset\var$ in general. However,
	Theorem~\ref{thm:cuancbg} shows that the equation (\hyperref[eq:contbg]{$E_{\gamma,s_{\gamma}}$}) is cost-uniformly
	approximately null-controllable from every thick control support $\omega\subset\var$ and in every positive time $T>0$, with a
	control cost $C_{\varepsilon,s_{\gamma},\omega,T}$ enjoying a polynomial behavior with respect to the parameter $\varepsilon>0$.

	Investigating the cost-uniform approximate null-controllability properties of the equation \eqref{eq:contbg} in the regime
	$0<s<(1+\gamma)/2$ is an open and very interesting problem that will not be tackled in the present paper.
\end{rk}

\begin{rk}
	Let us briefly explain how Theorem~\ref{thm:cuancbg} can be deduced from all the previous results.

	(1)
	The implication $(ii)\Rightarrow(i)$ in the case $s>(1+\gamma)/2$ is a consequence of Theorem~\ref{thm:encbg} since the notion
	of exact null-controllability is stronger than the notion of cost-uniform approximate null-controllability.

	(2)
	The implication $(ii)\Rightarrow(i)$ in the case $s=(1+\gamma)/2$ is obtained from the above
	Proposition~\ref{prop:dualityapproxcontuc} by integrating the quantitative unique continuation properties in
	Theorem~\ref{thm:ucthickbg} with respect to $t$. Indeed, since the evolution operators $e^{-t(-\Delta_{\gamma})^s}$ are
	contractions on $L^2(\var)$, we have 
	\begin{align*}
		\big\Vert e^{-T(-\Delta_{\gamma})^s}g\big\Vert^2_{L^2(\var)} 
		& = \frac1T\int_0^T\big\Vert e^{-T(-\Delta_{\gamma})^s}g\big\Vert^2_{L^2(\var)}\,\mathrm dt \\[5pt]
		& \le \frac1T\int_0^T\big\Vert e^{-t(-\Delta_{\gamma})^s}g\big\Vert^2_{L^2(\var)}\,\mathrm dt.
	\end{align*}

	(3)
	The implication $(i)\Rightarrow(ii)$ can be proved by proceeding similarly as in \cite[Theorem~2.1\,(i)]{AM}. We omit the
	details here.
\end{rk}

\subsection{Eigenfunctions}\label{sec:eigenfunctions}
Let us finally present uncertainty relations for (linear combinations of) eigenfunctions of the operator $\Delta_{\gamma}$. We
refer to Section~\ref{subsubsec:eigenfunctions} where a precise description of this countable family of eigenfunctions and the
associated eigenvalues is given. The first result concerns single eigenfunctions.

\begin{thm}\label{thm:ucsingleigenfunction}
	Let $\omega \subset \RR^d \times \TT^d$ be a measurable set with positive measure. Then, there exists a positive constant $K>0$,
	depending only on $\omega$, $\gamma$, and the dimension $d$, such that for every eigenfunction $\psi\in L^2(\var)$ of the operator
	$\Delta_{\gamma}$ with corresponding eigenvalue $\lambda > 0$, we have
	\[
		\norm{\psi}^2_{L^2(\var)}
		\leq
		K\euler^{ K \lambda^{\frac{1+\gamma}2}\log(1+\lambda) } \norm{\psi}^2_{L^2(\omega)}
		.
	\]
\end{thm}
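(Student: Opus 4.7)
The plan is to combine a localization of the eigenfunction to a bounded cylindrical region with a propagation-of-smallness argument inside that region. First, I would expand $\psi$ in a Fourier series with respect to $y$,
$$\psi(x,y) = \sum_{n\in\ZZ^d} e^{in\cdot y}\varphi_n(x),$$
so that each nonzero $\varphi_n \in L^2(\RR^d)$ is an eigenfunction of the anharmonic oscillator $-\Delta_x + \abs{n}^2\abs{x}^{2\gamma}$ with eigenvalue $\lambda$; only those $n$ with $\abs{n}^{2/(1+\gamma)}$ bounded by a constant multiple of $\lambda$ can contribute, so this sum is finite. Agmon-type exponential decay estimates for such eigenfunctions (of the kind collected in Appendix~\ref{app:local}) then provide a radius $R$ of order $\lambda^{1/(2\gamma)}$, independent of $\omega$, such that
$$\norm{\psi}_{L^2(\var)}^2 \le 2\,\norm{\psi}_{L^2(B_R\times\TT^d)}^2.$$

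Second, on the compact cylinder $B_R \times \TT^d$, the eigenfunction solves the degenerate elliptic equation $(-\Delta_\gamma - \lambda)\psi = 0$, and $-\Delta_\gamma$ is uniformly elliptic away from the characteristic locus $\{x = 0\}$. I would then invoke a Logunov--Malinnikova style propagation of smallness, iterating three-ball or doubling inequalities for solutions of this perturbed eigenvalue equation, to obtain, for any measurable $\omega_0 \subset (B_R\times\TT^d)\setminus\{x=0\}$ of positive measure,
$$\norm{\psi}_{L^2(B_R\times\TT^d)}^2 \le K\euler^{K\lambda^{(1+\gamma)/2}\log(1+\lambda)}\norm{\psi}_{L^2(\omega_0)}^2,$$
with $K$ absorbing the geometry of $\omega_0$. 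The exponent $\lambda^{(1+\gamma)/2}$ matches the principal-symbol scaling of $\Delta_\gamma$ and is consistent with the optimality shown in Proposition~\ref{prop:power}, while the extra $\log(1+\lambda)$ is the typical cost of iterating a local estimate over a number of balls that grows polynomially in $\lambda$. Because $\{x = 0\}$ has zero Lebesgue measure in $\var$, the given $\omega$ contains a subset of positive measure of the form $\omega \cap ((\RR^d\setminus B_\delta)\times \TT^d)$ for some $\delta > 0$; intersecting with $B_R \times \TT^d$ (enlarging $R$ if needed) furnishes a valid $\omega_0$, and chaining the two estimates yields the theorem.

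The hard part will be the second step: recovering the sharp exponent $\lambda^{(1+\gamma)/2}$ in the presence of sub-ellipticity. Treating $\lambda\psi$ as a lower-order perturbation of the principal part and applying classical elliptic Carleman inequalities would yield only $\euler^{C\sqrt{\lambda}}$, which misses the correct anisotropic scaling. The remedy is either a Carleman estimate adapted to the Baouendi--Grushin anisotropy, or a reduction to separate propagation estimates for each $\varphi_n$ on a ball of radius $\lambda^{1/(2\gamma)}$, whose analytic complexity forces precisely the exponent above after propagation across a region of that size. The degeneracy along $\{x = 0\}$ is a minor obstacle here because $\omega$ is fixed and $\{x = 0\}$ is a null set, so the associated cost can be absorbed into the $\omega$-dependent constant $K$.
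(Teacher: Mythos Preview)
Your proposal takes a genuinely different route from the paper. The paper does \emph{not} use Carleman estimates, three-ball inequalities, or Logunov--Malinnikova propagation of smallness. Instead, it works entirely through Kovrijkine-type arguments: from the explicit tensorized form $\psi_{\gamma,n,m} = (M_{\gamma,n}\phi_{\gamma,m})\otimes\varphi_n$ of the eigenfunction, Lemmas~\ref{lem:regeigen} and~\ref{lem:loceigen} furnish quantitative Gevrey-type bounds on \emph{all} derivatives $\partial_x^\alpha\partial_y^\beta\psi_{\gamma,n,m}$ together with an $L^2$-weighted exponential decay $\|e^{c_2|n||x|^{1+\gamma}}\psi_{\gamma,n,m}\|_{L^2}\le D$. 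These feed directly into the anisotropic local estimate of Appendix~\ref{app:local} (Proposition~\ref{prop:localEstimatewChangofVariablesAnisotropic}) combined with the good/bad covering scheme of Section~\ref{sec:uniqueCont}, applied on a single hyperrectangle $(-R,R)^d\times\TT^d$ as in Example~\ref{ex:ucDecay}. The sharp exponent $\lambda^{(1+\gamma)/2}$ then falls out of a case distinction balancing $|n|$ against $\lambda_{\gamma,m}$ in the choice of $R$, and the $\log(1+\lambda)$ factor arises from $\theta_R\gtrsim R^{-d}$. No PDE propagation argument appears anywhere.

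Your sketch, by contrast, has a genuine gap precisely where you flag it. You correctly observe that naive elliptic Carleman estimates give only $e^{C\sqrt\lambda}$, and you propose as remedies either an anisotropic Carleman estimate or separate propagation for each $\varphi_n$ --- but neither is carried out, and both are substantial undertakings. The anisotropic Carleman route is essentially the content of \cite{LL} on \emph{compact} manifolds; transporting it to the cylinder requires tracking how all constants depend on the radius $R$ of the localization region, which is exactly where the exponent lives. The second route does not obviously recombine: each $\varphi_n$ lives on $\RR^d$, while $\omega$ is a subset of $\RR^d\times\TT^d$, so a slice-wise argument plus a summation over $n$ would be needed, and controlling that sum without degrading the constant is not addressed. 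Finally, your appeal to Appendix~\ref{app:local} for Agmon decay is a misattribution: that appendix contains the local analytic estimate, not decay estimates; the decay you need is Lemma~\ref{lem:loceigen}. In short, your plan points toward a harder problem than necessary, while the paper sidesteps the sub-elliptic propagation issue entirely by exploiting the explicit Gevrey regularity of the eigenfunctions.
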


\begin{rk}\label{rk:compEigenfunctions}
As already mentioned in Remark~\ref{rk:comparespein}, uncertainty relations were obtained for the eigenfunctions of the
Baouendi--Grushin operator $\Delta_{\gamma}$ acting on a compact manifold $\mathcal M$ (rigorously, of a variant of this operator
with analytic coefficients). More precisely, given an open subset $\omega\subset\mathcal M$, we get from \cite[Theorem~1.10]{LL}
the uncertainty relation
\[
	\norm{\psi}^2_{L^2(\mathcal M)}
	\le 
	K\euler^{ K \lambda^{\frac{1+\gamma}2}} \norm{\psi}^2_{L^2(\omega)}
	,
\]
where $\psi$ is an eigenfunction of the operator $\Delta_{\gamma}$ associated with the eigenvalue $\lambda>0$. In fact, the
result \cite[Theorem 1.10]{LL} holds more generally with type I H\"ormander operators. It is worth to note that the power
$(1+\gamma)/2$ here is essentially consistent with the one stated in Theorem~\ref{thm:ucsingleigenfunction}. An interpretation of
the additional $\log(1+\lambda)$ term in Theorem~\ref{thm:ucsingleigenfunction} is given in Remark~\ref{rk:geometry} below.
\end{rk}

Our second result in this section concerns linear combinations of eigenfunctions of the operator $\Delta_{\gamma}$. For any
non-negative energy level $\lambda\geq0$, let us consider the subspace
\begin{equation}\label{eq:combilineigen}
	E_{\lambda}(-\Delta_{\gamma})
	=
	\spane\big\{\psi\in \cE_{\lambda}(-\Delta_{\gamma}) \colon -\Delta_{\gamma}\psi = \mu\psi,\,\mu\in[0,\lambda]\big\}
	\subset
	\cE_{\lambda}(-\Delta_{\gamma})
	.
\end{equation}
Notice that the spectral subspace $\mathcal E_{\lambda}(-\Delta_{\gamma})$ does not coincide with the above subspace
$E_{\lambda}(-\Delta_{\gamma})$ since the spectrum of the operator $\Delta_{\gamma}$ is not purely discrete. We obtain the
following uncertainty relations for functions in $E_{\lambda}(-\Delta_{\gamma})$.

\begin{thm}\label{thm:ucgeneraleigenfunctions}
	Let $\omega \subset \RR^d \times \TT^d$ be a measurable set with positive measure. Then, there exists a positive constant $K>0$,
	depending only on $\omega$, $\gamma$, and the dimension $d$, such that for all $\lambda \geq 0$ and all
	$f \in E_{\lambda}(-\Delta_{\gamma})$,
	\[
		\norm{f}^2_{L^2(\var)}
		\leq
		K\euler^{ K \lambda^{(\frac12+\frac1{2\gamma})(1+\gamma)}\log(1+\lambda) } \norm{f}^2_{L^2(\omega)}
		.
	\]
\end{thm}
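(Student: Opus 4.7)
The plan is to exploit the product structure of $\var$ by Fourier-decomposing $f$ in the torus variable $y$, and to combine two uncertainty principles from arbitrary positive-measure sensor sets: a Turán-type inequality for trigonometric polynomials on $\TT^d$ in the $y$ variable, and a Shubin-type uncertainty for linear combinations of eigenfunctions of the anharmonic oscillator $-\Delta+|\tilde x|^{2\gamma}$ on $\RR^d$ in the $x$ variable (after rescaling). The second ingredient is essentially the Shubin counterpart of the present theorem and should be accessible via the scheme of Section~\ref{sec:Shubin} together with the local technical tools in Appendix~\ref{app:local}.

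First, I would use that by separation of variables every eigenfunction of $-\Delta_\gamma$ on $\var$ is of the form $\phi(x)e^{in\cdot y}$ with $n\in\ZZ^d\setminus\{0\}$ and $\phi$ an eigenfunction of the Shubin-type operator $A_n := -\Delta_x + |n|^2|x|^{2\gamma}$. Writing $f(x,y)=\sum_n f_n(x)\,e^{in\cdot y}$, each $f_n$ belongs to the finite-dimensional subspace $\cE_\lambda(A_n)$, and since $\inf\spec(A_n)$ is of order $|n|^{2/(1+\gamma)}$, only $|n|\le C\lambda^{(1+\gamma)/2}$ contribute. Next, a Fubini argument applied to $\omega$ produces $\delta>0$ and a measurable set $\omega_x\subset\RR^d$ of positive measure, both depending only on $\omega$, such that for every $x\in\omega_x$ the slice $\omega^{(x)}:=\{y\in\TT^d:(x,y)\in\omega\}$ has measure at least $\delta$.

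For each fixed $x\in\omega_x$, the map $y\mapsto f(x,y)$ is a trigonometric polynomial on $\TT^d$ of degree at most $N:=C\lambda^{(1+\gamma)/2}$, so a multivariate Turán-type inequality yields
\[
\norm{f(x,\cdot)}_{L^2(\TT^d)}^2
\le
\Bigl(\frac{K}{\delta}\Bigr)^{KN}\norm{f(x,\cdot)}_{L^2(\omega^{(x)})}^2.
\]
Integrating over $\omega_x$ and applying Plancherel in $y$ gives
\[
\sum_n\norm{f_n}_{L^2(\omega_x)}^2
\le
K_0\Bigl(\frac{K}{\delta}\Bigr)^{KN}\norm{f}_{L^2(\omega)}^2.
\]
For each admissible $n$, the rescaling $x=|n|^{-1/(1+\gamma)}\tilde x$ identifies $f_n$ with an element of $\cE_{\Lambda_n}(-\Delta+|\tilde x|^{2\gamma})$ with $\Lambda_n:=\lambda/|n|^{2/(1+\gamma)}\le\lambda$, relative to the rescaled sensor $|n|^{1/(1+\gamma)}\omega_x$. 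Invoking the Shubin-type uncertainty from positive-measure sets produces $\norm{f_n}_{L^2(\RR^d)}^2\le D_n\norm{f_n}_{L^2(\omega_x)}^2$, with $\log D_n$ controlled by suitable powers of $\Lambda_n$ and a logarithm of the rescaled sensor measure. Summing over $n$ via Plancherel in $y$,
\[
\norm{f}_{L^2(\var)}^2
=
(2\pi)^d\sum_n\norm{f_n}_{L^2(\RR^d)}^2
\le
\Bigl(\max_n D_n\Bigr)\cdot K_0\Bigl(\frac{K}{\delta}\Bigr)^{KN}\norm{f}_{L^2(\omega)}^2,
\]
which, after balancing the Turán and Shubin contributions, should yield a bound of the form $K\exp\bigl(K\lambda^{(1+\gamma)^2/(2\gamma)}\log(1+\lambda)\bigr)$.

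The principal technical difficulty will be obtaining the sharp Shubin-type uncertainty above and tracking how the rescaling factor $|n|^{1/(1+\gamma)}$ changes the bound across the entire range $|n|\le C\lambda^{(1+\gamma)/2}$. This relies on Agmon-type decay estimates localizing eigenfunctions of $-\Delta+|\tilde x|^{2\gamma}$ to the essential-support ball $\{|\tilde x|\le C\Lambda_n^{1/(2\gamma)}\}$, followed by a Remez-type polynomial inequality on that ball applied to the polynomial-like factor remaining once the Gaussian-type decay has been stripped off. The product-type structure of the final exponent $(1+\gamma)^2/(2\gamma)=((1+\gamma)/2)\cdot((1+\gamma)/\gamma)$ reflects the interaction between the Turán degree $\lambda^{(1+\gamma)/2}$ from $y$ and the intrinsic polynomial-like degree $\lambda^{(1+\gamma)/(2\gamma)}$ from $x$ after rescaling, while the $\log(1+\lambda)$ factor is the signature of the Remez estimate combined with the rescaling.
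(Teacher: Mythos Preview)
Your route is genuinely different from the paper's and, carried out carefully, actually proves more than the stated theorem. The paper does not separate variables: it feeds the global derivative bounds of Lemma~\ref{lem:regeigen} (with $c_x\sim\sqrt\lambda$, $c_y\sim\lambda^{(1+\gamma)/2}$) and the global decay of Lemma~\ref{lem:loceigen} (namely $\Vert e^{c_2\vert x\vert^{1+\gamma}}f\Vert\lesssim e^{c_1\lambda^{(1+\gamma)/(2\gamma)}}\Vert f\Vert$) into the one-box scheme of Example~\ref{ex:ucDecay} together with the anisotropic local estimate of Appendix~\ref{app:local}; the choice $R^{1+\gamma}\sim\lambda^{(1+\gamma)/(2\gamma)}$ then makes $(Rc_x)^{1+\gamma}\sim\lambda^{(1+\gamma)^2/(2\gamma)}$ the dominant term in the exponent. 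The loss is structural: those global lemmas pair the worst derivative scale (coming from large $\vert n\vert$) with the worst decay rate (coming from $\vert n\vert=1$), two extremes that never occur for the same Fourier mode. Your mode-by-mode argument avoids exactly this mismatch. After Tur\'an in $y$ and Parseval, each $f_n$ satisfies the $\RR^d$-Shubin framework with $c_x\sim\vert n\vert^{1/(1+\gamma)}$ and decay weight $e^{c_2\vert n\vert\,\vert x\vert^{1+\gamma}}$; running Example~\ref{ex:ucDecay} for each $n$ with $R\ge R_\omega$ and $\vert n\vert R^{1+\gamma}\gtrsim\Lambda_n^{(1+\gamma)/(2\gamma)}$ gives $\log D_n\lesssim\vert n\vert R^{1+\gamma}\log(1+R)$, and maximizing over $1\le\vert n\vert\lesssim\lambda^{(1+\gamma)/2}$ yields only $\lambda^{(1+\gamma)/2}$ up to a logarithm. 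Thus the Tur\'an and Shubin costs \emph{add} in the exponent rather than multiply: your ``product-type'' heuristic for $(1+\gamma)^2/(2\gamma)$ is incorrect, but in your favor --- you are in fact recovering the single-eigenfunction rate of Theorem~\ref{thm:ucsingleigenfunction} for all of $E_\lambda(-\Delta_\gamma)$, which strictly improves on the paper's statement. The one point to write out carefully is the uniform-in-$n$ Shubin inequality with explicit dependence on $\vert n\vert$, $\Lambda_n$, and $R_\omega$; this follows from the framework of Section~\ref{sec:uniqueCont} exactly as you suggest.
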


\begin{rk}\label{rk:geometry}
	Notice that Theorem~\ref{thm:ucsingleigenfunction} and Theorem~\ref{thm:ucgeneraleigenfunctions} allow to consider sensor sets
	$\omega\subset\var$ with merely positive measure, whereas Corollary~\ref{cor:speinthickbg} requires the sensor set to be thick
	in the cylinder $\mathbb R^d\times\mathbb T^d$. This is due to the fact that the eigenfunctions of the operator
	$\Delta_{\gamma}$ enjoy localizing properties, in contrast to general functions in the spectral subspaces
	$\mathcal E_{\lambda}(-\Delta_{\gamma})$. More precisely, as becomes clear in the proof of
	Theorem~\ref{thm:ucgeneraleigenfunctions}, there is a constant $c > 0$, depending only on $\gamma$ and the dimension $d$, such
	that the functions $f\in E_{\lambda}(-\Delta_{\gamma})$ are localized in the sense
	\[
		\norm{f}_{L^2(\var)}
		\le
		\norm{f}_{L^2(B(0,c\lambda^{1/2\gamma})\times\mathbb T^d)}
		.
	\]
	It is therefore sufficient to prove for functions in $E_{\lambda}(-\Delta_{\gamma})$ estimates on the finite cylinder
	$B(0,c\lambda^{1/2\gamma})\times\mathbb T^d$ in order to obtain estimates on the whole cylinder $\var$. Moreover, as discussed
	in Section~\ref{subsec:eigenfunctionsproofs}, the $\log(1+\lambda)$ term appearing in Theorems~\ref{thm:ucsingleigenfunction}
	and~\ref{thm:ucgeneraleigenfunctions} is linked to the volume of this finite cylinder
	$B(0,c\lambda^{1/2\gamma})\times\mathbb T^d$.
\end{rk}

\begin{rk}
	(1)
	Notice that the two powers in Theorems~\ref{thm:ucsingleigenfunction} and~\ref{thm:ucgeneraleigenfunctions} are different, and
	that the one for single eigenfunctions in Theorem~\ref{thm:ucsingleigenfunction} is stronger. This suggests that there are
	cancellation phenomena, just as we already know for linear combinations of eigenfunctions of the Laplacian on the torus, see,
	e.g., Turan's lemma in \cite{FM,N}.

	(2)
	Let us now analyse the power in the spectral inequalities of Theorem~\ref{thm:ucgeneraleigenfunctions}. On the one hand, the
	term $1/2+1/(2\gamma)$ is familiar from the literature since it also plays a role in the spectral inequalities for linear
	combinations of eigenfunctions of the anharmonic oscillator $H_{\gamma} = -\Delta + \vert x\vert^{2\gamma}$, which according
	to \cite[Theorem~2.1\,$(ii)$]{M} read
	\begin{equation}\label{eq:speinshubin}
		\norm{f}^2_{L^2(\RR^d)}
		\leq
		Ke^{K\lambda^{\frac12+\frac1{2\gamma}}\log(1+\lambda)} \norm{f}^2_{L^2(\omega)}
		,\quad
		f \in \cE_\lambda(H_{\gamma})
		,\
		\lambda>0
		.
	\end{equation}
	Here, the operator $H_\gamma$ is linked to the Baouendi--Grushin operator by a diagonalization with respect to the $y$ variable,
	see the similarity relation \eqref{eq:sim} below. Moreover, the term $1/2+1/(2\gamma)$ can be interpreted as the (inverse of
	the) order of the anharmonic oscillator $H_{\gamma}$ in a class of pseudo-differential operators associated with a H\"ormander
	metric, as proved in the paper \cite[Corollary~3.8]{CDR}. On the other hand, the term $1+\gamma$ is the hypoellipticity index
	of the Baouendi--Grushin operator $\Delta_{\gamma}$, see, e.g., \cite[Example~1.7]{LL}.
\end{rk}

\begin{rk}
	Let us consider the Baouendi--Grushin operator 
	\[
		\Delta^{bd}_{\gamma} = \Delta_x + \vert x\vert^{2\gamma}\Delta_y,\quad (x,y)\in\mathbb R^d\times(0,2\pi)^d,
	\]
	where $\Delta_y$ denotes in this case the Laplacian on the hypercube $(0,2\pi)^d$ with Dirichlet boundary conditions. The
	spectrum of the operator $-\Delta^{bd}_{\gamma}$ is now purely discrete, see Section~\ref{subsec:bd}. As a consequence, the
	spectral subspaces $\mathcal E_{\lambda}(-\Delta^{bd}_{\gamma})$ consist exclusively of linear combinations of eigenfunctions
	of the operator $\Delta^{bd}_{\gamma}$, that is, the spaces $\mathcal E_{\lambda}(-\Delta^{bd}_{\gamma})$ and
	$E_{\lambda}(-\Delta^{bd}_{\gamma})$ coincide. Corresponding variants of Theorems~\ref{thm:ucsingleigenfunction}
	and~\ref{thm:ucgeneraleigenfunctions}, which are still valid for the operator $\Delta^{bd}_{\gamma}$ as explained in
	Section~\ref{subsec:bd}, therefore deal with the complete spectral subspaces $\mathcal E_{\lambda}(-\Delta^{bd}_{\gamma})$.
	This has ramifications on the null-controllability properties of the evolution equations associated to the operator
	$-\Delta^{bd}_{\gamma}$, but we do not pursue this further here.
\end{rk}

\section{General unique continuation estimates}\label{sec:uniqueCont}
In this section, we provide a general framework towards unique continuation estimates for functions satisfying suitable smoothing
estimates. Here, we follow the general strategy from the classical approach by Kovrijkine \cite{Kovrijkinethesis,Kovrijkine-01},
which has been exploited, adapted, and generalized in several recent works such as
\cite{AlphonseS, BeauchardJPS-21, DickeSV, DickeSV-23, EgidiS-21, EgidiV-20, Martin-22, MartinPS-22, WangWZZ-19}.
We focus specifically on \cite{Martin-22} and aim for a ready-made tool box to be applied in various different situations. This
tool box will be formulated explicitly for domains in $\RR^d$, but it applies, of course, also to domains in $\RR^{2d}$ like the
strip $\RR^d \times (0,2\pi)^d$, which corresponds to the cylinder $\var$ from Section~\ref{sec:results}.
The strategy itself relies on a suitable covering of the underlying domain and a local estimate on a sufficiently large subclass
of the covering sets, so-called good elements of the covering. The local estimate is presented in
Section~\ref{ssec:local}, whereas the covering strategy is discussed in Sections~\ref{ssec:good} and~\ref{ssec:covering}. The
general scheme is summarized in Corollary~\ref{cor:goodAndBad} and demonstrated in several examples in
Section~\ref{ssec:covering}.

\subsection{A local estimate}\label{ssec:local}
Let us begin with the discussion of the local estimate. To this end, we use the following result, which is a straightforward
adaptation of Example~5.11 in \cite{Martin-22}; cf.\ also Lemma~5.4 and Proposition~5.10 there.

\begin{prop}\label{prop:localEstimate}
	Let $\cQ \subset \RR^d$ be a non-empty bounded convex open set, and let $\cE \subset \cQ$ be measurable with positive measure.
	Suppose that $g \in C^\infty(\cQ) \setminus\{0\}$ satisfies
	\[
		\forall\alpha \in \NN^d
		,\quad
		\norm{\partial^\alpha g}_{L^\infty(\cQ)}
		\leq
		A B^{\alpha} (\abs{\alpha}!)^\mu
	\]
	with some $A > 0$, $B \in (0,\infty)^d$, and $\mu \in [0,1]$. Then, there is a constant $K = K(d,\mu) > 0$, only depending
	on $\mu$ and the dimension $d$, such that
	\[
		\norm{g}_{L^2(\cQ)}
		\leq
		\biggl( \frac{K\vert\cQ\vert}{\vert\cE\vert} \biggr)^{KC_{\mu}}
			\norm{g}_{L^2(\cE)}
	\]
	with
	\[
		C_{\mu}
		=
		\begin{cases}
			1 - \log\norm{g}_{L^\infty} + \log(A) + (\abs{B}_1\diam\cQ)^{\frac{1}{1-\mu}} ,& 0 \leq \mu < 1,\\[5pt]
			\bigl( 1 - \log\norm{g}_{L^\infty} + \log(A) \bigr) e^{K\abs{B}_1\diam\cQ} ,& \mu = 1.
		\end{cases}
	\]
\end{prop}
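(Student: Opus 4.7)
The plan is to carry out the classical Kovrijkine--Nazarov strategy: Taylor-expand $g$ around a near-maximizing point, truncate at degree $N-1$, control the Taylor polynomial by a Remez--Tur\'an inequality on $\cQ$, and bound the remainder using the Gevrey-type derivative estimates. Concretely, I would first pick $x^*\in\cQ$ with $|g(x^*)|\geq \tfrac12\|g\|_{L^\infty(\cQ)}$ and, for $N\in\NN^*$ to be chosen later, decompose $g = P_{N-1}+R_N$ on $\cQ$, where $P_{N-1}$ is the Taylor polynomial of $g$ at $x^*$ of order $N-1$. Using the integral form of the remainder, the convexity of $\cQ$ (so segments from $x^*$ remain in $\cQ$), the hypothesis $\|\partial^\alpha g\|_{L^\infty(\cQ)}\leq A B^\alpha(|\alpha|!)^\mu$, and the multinomial identity $\sum_{|\alpha|=N}\frac{B^\alpha z^\alpha}{\alpha!}=\frac{1}{N!}\bigl(\sum_i B_i z_i\bigr)^N$, one obtains
\[
\|R_N\|_{L^\infty(\cQ)}\leq A(N!)^{\mu-1}(|B|_1\diam\cQ)^N .
\]

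Next, I would invoke a Brudnyi--Ganzburg type polynomial inequality on the bounded convex body $\cQ$: for a dimensional constant $K_d>0$, every polynomial $P$ of degree $\leq N-1$, and every measurable $\cE\subset\cQ$ with $|\cE|>0$,
\[
\|P\|_{L^\infty(\cQ)}\leq\Bigl(\frac{K_d|\cQ|}{|\cE|}\Bigr)^{K_d N}|\cE|^{-1/2}\|P\|_{L^2(\cE)} .
\]
Applied to $P_{N-1}$, and combined with the triangle-type bounds $\|P_{N-1}\|_{L^2(\cE)}\leq\|g\|_{L^2(\cE)}+|\cE|^{1/2}\|R_N\|_{L^\infty(\cQ)}$ and $\|g\|_{L^\infty(\cQ)}\leq\|P_{N-1}\|_{L^\infty(\cQ)}+\|R_N\|_{L^\infty(\cQ)}$, this yields
\[
\|g\|_{L^\infty(\cQ)}\leq\Bigl(\frac{K_d|\cQ|}{|\cE|}\Bigr)^{K_dN}\Bigl(|\cE|^{-1/2}\|g\|_{L^2(\cE)}+2\|R_N\|_{L^\infty(\cQ)}\Bigr).
\]
Choosing $N$ so that the remainder contribution on the right is at most $\tfrac14\|g\|_{L^\infty(\cQ)}$, then using $|g(x^*)|\geq\tfrac12\|g\|_{L^\infty(\cQ)}$ to absorb this remainder into the left-hand side, and finally passing to $L^2$ via $\|g\|_{L^2(\cQ)}\leq|\cQ|^{1/2}\|g\|_{L^\infty(\cQ)}$, would produce the claimed bound, with the extra factor $|\cQ|^{1/2}/|\cE|^{1/2}$ absorbed into $(K|\cQ|/|\cE|)^{KC_\mu}$.

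The hard part is the calibration of $N$, and in particular the borderline Gevrey case $\mu=1$. For $0\leq\mu<1$, Stirling's estimate $(N!)^{1-\mu}\gtrsim(N/e)^{N(1-\mu)}$ makes the absorption work with a choice of the form
\[
N\sim 1-\log\|g\|_{L^\infty(\cQ)}+\log A+(|B|_1\diam\cQ)^{1/(1-\mu)},
\]
which is precisely $C_\mu$ in the first case of the statement. For $\mu=1$, the factor $(N!)^{\mu-1}\equiv 1$ no longer decays with $N$, so this truncation strategy fails; here I would exploit that the analytic-type bound $\|\partial^\alpha g\|_{L^\infty(\cQ)}\leq A B^\alpha|\alpha|!$ forces $g$ to extend holomorphically to a polydisc neighborhood of $\cQ$ with radii proportional to $1/B_i$, and then apply an analytic doubling / Bernstein--Walsh argument on this complex neighborhood (in the spirit of Kovrijkine's thesis and \cite{Martin-22}) to trade the divergent factor $(|B|_1\diam\cQ)^N$ for an exponential factor $e^{K|B|_1\diam\cQ}$. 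Combined with the same Taylor/Remez scheme and the choice $N\sim 1-\log\|g\|_{L^\infty(\cQ)}+\log A$, this recovers the exponential form of $C_\mu$ stated for $\mu=1$.
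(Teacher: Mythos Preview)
The paper does not prove this proposition; it records it as a direct adaptation of \cite[Example~5.11, Lemma~5.4, Proposition~5.10]{Martin-22}. Your sketch, however, contains a real gap in the absorption step, already in the subcritical case $0\le\mu<1$.

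After applying the Brudnyi--Ganzburg inequality you arrive at
\[
\|g\|_{L^\infty(\cQ)}\le\Bigl(\tfrac{K_d|\cQ|}{|\cE|}\Bigr)^{K_dN}\Bigl(|\cE|^{-1/2}\|g\|_{L^2(\cE)}+2\|R_N\|_{L^\infty(\cQ)}\Bigr),
\]
and then ask that the remainder contribution $\bigl(\tfrac{K_d|\cQ|}{|\cE|}\bigr)^{K_dN}\cdot 2\|R_N\|_{L^\infty(\cQ)}$ be $\le\tfrac14\|g\|_{L^\infty(\cQ)}$. But this term already carries the Remez factor, so the inequality you must solve is
\[
\Bigl(\tfrac{K_d|\cQ|}{|\cE|}\Bigr)^{K_dN}A(N!)^{\mu-1}(|B|_1\diam\cQ)^N\lesssim\|g\|_{L^\infty(\cQ)}.
\]
A Stirling computation shows that any admissible $N$ is forced to satisfy $N\gtrsim\bigl(\tfrac{|\cQ|}{|\cE|}\bigr)^{c/(1-\mu)}(|B|_1\diam\cQ)^{1/(1-\mu)}$; in particular $N$ depends on $|\cE|$. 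Feeding this $N$ back into the exponent produces a bound of the shape $(K|\cQ|/|\cE|)^{KN}$ with $N$ itself growing like a power of $|\cQ|/|\cE|$, which is strictly worse than the claimed $(K|\cQ|/|\cE|)^{KC_\mu}$ with $C_\mu$ independent of $|\cE|$. The same circularity reappears in your $\mu=1$ outline, where the proposed choice $N\sim1-\log\|g\|_{L^\infty}+\log A$ is again made only after the remainder has been multiplied by the Remez factor.

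The route taken in \cite{Martin-22} (and in Kovrijkine's and Nazarov's original arguments) sidesteps this by never truncating to a polynomial. One restricts $g$ to real lines through the near-maximising point; the Gevrey bounds force the one-variable restriction to extend holomorphically to a complex disc (entire for $\mu<1$, of radius $\sim|B|_1^{-1}$ for $\mu=1$), with supremum $M$ satisfying $\log M$ comparable to the target $C_\mu$. One then applies the \emph{analytic} Remez/Tur\'an lemma (via Jensen's zero-counting) directly, so that the exponent $\sim\log M$ is produced by the lemma itself and never sees $|\cE|$. For $\mu=1$ with $|B|_1\diam\cQ$ large one chains this over $\sim|B|_1\diam\cQ$ overlapping discs, which is precisely the origin of the factor $e^{K|B|_1\diam\cQ}$ in $C_1$. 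Your holomorphic-extension instinct for $\mu=1$ is thus the right ingredient, but it should \emph{replace} the polynomial Remez step rather than be layered on top of it.
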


The following consequence of Proposition~\ref{prop:localEstimate} is tailored towards a combination with the covering strategy
described in Section~\ref{ssec:good} below. Here and from now on, we adopt the notation $\ell x \in \RR^d$ for the
coordinatewise product of $\ell \in (0,\infty)^d$ and $x \in \RR^d$.

\begin{cor}\label{cor:localEstimatewChangofVariables}
	Let $\cQ = (0,1)^d$ be the unit hypercube in $\RR^d$, and let $Q = \Psi(\cQ)$, where
	$\Psi \colon \RR^d \to \RR^d$ is given by $\Psi(x) = x_0 + \ell x$, $x \in \RR^d$, with some $x_0 \in \RR^d$ and
	$\ell \in (0,\infty)^d$. Moreover, suppose that $f \in C^\infty(Q)$ satisfies
	\[
		\forall\alpha \in \NN^d
		,\quad
		\norm{\partial^\alpha f}_{L^2(Q)}
		\leq
		A B^{\alpha} (\abs{\alpha}!)^\mu \norm{f}_{L^2(Q)}
	\]
	with some $A > 0$, $B \in (0,\infty)^d$, and $\mu \in [0,1]$. Then, there is a constant $K = K(d,\mu) > 0$, only depending
	on $\mu$ and the dimension $d$, such that for every measurable set $E \subset Q$ of positive measure we have
	\[
		\norm{f}_{L^2(Q)}^2
		\leq
		\biggl( \frac{K\vert Q\vert}{\vert E\vert} \biggr)^{KC_{\mu}}
			\norm{f}_{L^2(E)}^2
	\]
	with
	\[
		C_{\mu}
		=
		\begin{cases}
			1 + \log(A) + (\abs{\ell B}_1)^{\frac{1}{1-\mu}} ,& 0 \leq \mu < 1,\\
			\bigl( 1 + \log(A) \bigr) e^{K\abs{\ell B}_1} ,& \mu = 1.
		\end{cases}
	\]
\end{cor}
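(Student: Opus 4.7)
\medskip

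\noindent\textbf{Proof plan.} The plan is to reduce to Proposition~\ref{prop:localEstimate} on the reference cube $\cQ=(0,1)^d$ by two standard moves: an affine change of variables, then an upgrade from $L^2$ to $L^\infty$ derivative bounds via Sobolev embedding. First I would set $g = f\circ\Psi \in C^\infty(\cQ)$. Since the chain rule gives $\partial^\alpha g = \ell^{\alpha}(\partial^\alpha f)\circ\Psi$ and the Jacobian of $\Psi$ equals $\prod_i \ell_i = \lvert Q\rvert$, a direct computation shows
\[
	\norm{\partial^\alpha g}_{L^2(\cQ)}
	\leq
	A(\ell B)^{\alpha}(\abs{\alpha}!)^\mu \norm{g}_{L^2(\cQ)}
	.
\]
By homogeneity we may also assume $\norm{g}_{L^2(\cQ)}=1$, which has the pleasant byproduct that $\norm{g}_{L^\infty(\cQ)}\geq 1$ (as $\lvert\cQ\rvert=1$), so the term $-\log\norm{g}_{L^\infty}$ arising later will be non-positive and can be dropped.

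Next I would convert these $L^2$ Gevrey-type bounds into $L^\infty$ Gevrey-type bounds. Fix an integer $s > d/2$ and use the standard Sobolev embedding $H^s(\cQ)\hookrightarrow L^\infty(\cQ)$ to write
\[
	\norm{\partial^\alpha g}_{L^\infty(\cQ)}
	\leq
	C_d\sum_{\abs{\beta}\leq s}\norm{\partial^{\alpha+\beta}g}_{L^2(\cQ)}
	\leq
	C_d\sum_{\abs{\beta}\leq s}A(\ell B)^{\alpha+\beta}((\abs{\alpha}+s)!)^\mu
	.
\]
Using the elementary bound $(n+s)!\leq 2^{n+s}\,s!\,n!$ to extract the Gevrey factor $(\abs{\alpha}!)^\mu$, and estimating $(\ell B)^\beta\leq (1+\abs{\ell B}_1)^s$ for $\abs\beta\leq s$, I arrive at
\[
	\norm{\partial^\alpha g}_{L^\infty(\cQ)}
	\leq
	\widetilde{A}\,\widetilde{B}^{\alpha}(\abs{\alpha}!)^\mu
	,\qquad
	\widetilde{A} = C_{d,\mu}A(1+\abs{\ell B}_1)^s
	,\quad
	\widetilde{B} = 2^\mu\,\ell B
	.
\]

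I would then apply Proposition~\ref{prop:localEstimate} to $g$ on $\cQ$ with the measurable subset $\cE := \Psi^{-1}(E)\subset\cQ$, noting that by the affine change of variables $\lvert\cE\rvert/\lvert\cQ\rvert = \lvert E\rvert/\lvert Q\rvert$. This yields
\[
	\norm{g}_{L^2(\cQ)}
	\leq
	\biggl(\frac{K'\lvert Q\rvert}{\lvert E\rvert}\biggr)^{K'\widetilde{C}_\mu}\norm{g}_{L^2(\cE)}
\]
with $\widetilde{C}_\mu = 1 + \log\widetilde{A} + (\abs{\widetilde B}_1\diam\cQ)^{1/(1-\mu)}$ for $\mu<1$ and $\widetilde{C}_\mu = (1+\log\widetilde{A})e^{K'\abs{\widetilde B}_1\diam\cQ}$ for $\mu=1$. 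Translating back via $\norm{g}_{L^2(\cQ)}^2 = \lvert Q\rvert^{-1}\norm{f}_{L^2(Q)}^2$ and $\norm{g}_{L^2(\cE)}^2 = \lvert Q\rvert^{-1}\norm{f}_{L^2(E)}^2$ gives the claimed inequality after squaring.

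The only step that requires real care, and which I expect to be the main obstacle, is bookkeeping the exponent $\widetilde{C}_\mu$ so that it matches the form of $C_\mu$ in the statement. For $\mu<1$, the spurious contribution $s\log(1+\abs{\ell B}_1)$ coming from $\log\widetilde A$ is absorbed into $(\abs{\ell B}_1)^{1/(1-\mu)}$ using $\log(1+x)\leq x\leq 1+x^{1/(1-\mu)}$; the factor $\diam\cQ=\sqrt d$ and the constants $2^\mu$ and $C_{d,\mu}$ are absorbed into $K$. For $\mu=1$, the same $\log(1+\abs{\ell B}_1)$ correction, together with $\log C_{d,1}$, is absorbed by passing to $(1+\log A)\exp(K\abs{\ell B}_1)$, using $1+\log(1+x)\leq e^x$. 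All routine but with enough moving pieces that the constants have to be reshuffled carefully to land on exactly the stated $C_\mu$.
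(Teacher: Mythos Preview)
Your proof is correct and follows essentially the same approach as the paper: change variables to the unit cube, upgrade the $L^2$ Gevrey bounds to $L^\infty$ bounds via a Sobolev embedding on $\cQ$, apply Proposition~\ref{prop:localEstimate}, and absorb the correction term $\log(1+\abs{\ell B}_1)$ into the stated form of $C_\mu$. The only cosmetic differences are that the paper fixes $s=d$ for the Sobolev exponent, normalizes $g$ explicitly rather than invoking homogeneity, and bounds $(\ell B)^\beta$ via $(1+\abs{\ell B}^2)^{d/2}$ instead of $(1+\abs{\ell B}_1)^s$.
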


\begin{proof}
	There is nothing to prove if $f = 0$, so we may suppose that $f \neq 0$. Define the function $g \in C^\infty(\cQ)$ by
	\[
		g(x)
		:=
		\frac{\sqrt{\vert\cQ\vert}}{\norm{f \circ \Psi}_{L^2(\cQ)}} (f \circ \Psi)(x)
		=
		\frac{\sqrt{\vert\cQ\vert}}{\norm{f \circ \Psi}_{L^2(\cQ)}} f(x_0 + \ell x)
		,\quad
		x \in \cQ
		,
	\]
	which obviously satisfies
	\begin{equation}\label{eq:lowerBoundg}
		\norm{g}_{L^\infty(\cQ)}
		=
		\frac{\sqrt{\vert\cQ\vert}}{\norm{f \circ \Psi}_{L^2(\cQ)}} \norm{f \circ \Psi}_{L^\infty(\cQ)}
		\geq
		1
		,
	\end{equation}
	and also
	\begin{equation}\label{eq:boundDerg}
		\forall\alpha \in \NN^d
		,\quad
		\norm{\partial^\alpha g}_{L^2(\cQ)}
		\leq
		(A\sqrt{\vert\cQ\vert}) (\ell B)^\alpha (\abs{\alpha}!)^\mu
	\end{equation}
	by the hypothesis on $f$.

	Now, $\cQ$ satisfies the cone condition, so that by \cite[Theorem~4.12]{AdamsF-03} we have the Sobolev embedding
	$W^{d,2}(\cQ) \hookrightarrow L^\infty(\cQ)$, that is, there is a constant $C$, depending only on the dimension $d$, such that
	\[
		\forall u\in W^{d,2}(\cQ)
		,\quad
		\norm{u}_{L^\infty}(\cQ)
		\leq
		C\norm{u}_{W^{d,2}(\cQ)}
		.
	\]
	Taking into account that for multiindices $\alpha,\beta \in \NN^d$ with $\abs{\beta} \leq d$ we have
	\[
		\abs{\alpha+\beta}!
		\leq
		2^{\abs{\alpha}+\abs{\beta}} \abs{\alpha}! \abs{\beta}!
		\leq
		2^d d! 2^{\abs{\alpha}} \abs{\alpha}!
		,
	\]
	we then deduce from \eqref{eq:boundDerg} for every $\alpha \in \NN^d$ that
	\begin{align*}
		\norm{ \partial^\alpha g }_{L^\infty(\cQ)}^2
		&\leq
		C^2 \norm{ \partial^\alpha g }_{W^{d,2}(\cQ)}
			=
			C^2 \sum_{\abs{\beta} \leq d} \norm{ \partial^{\alpha + \beta}g }_{L^2(\cQ)}^2\\
		&\leq
		\bigl( C A \sqrt{\vert\cQ\vert} \bigr)^2 (\ell B)^{2\alpha} \sum_{\abs{\beta} \leq d} (\ell B)^{2\beta}
			(\abs{\alpha + \beta}!)^{2\mu}\\
		&\leq
		\bigl( C 2^{d\mu} (d!)^\mu A\sqrt{\vert\cQ\vert} \bigr)^2 (2^\mu \ell B)^{2\alpha} (\abs{\alpha}!)^{2\mu}
			\sum_{\abs{\beta}\leq d} (\ell B)^{2\beta}\\
		&\leq
		\bigl( C 2^{d\mu} (d!)^\mu A\sqrt{\vert\cQ\vert} \bigr)^2 (1 + \abs{\ell B}^2)^d (2^\mu \ell B)^{2\alpha}
			(\abs{\alpha}!)^{2\mu}
		,
	\end{align*}
	and, thus,
	\begin{equation}\label{eq:boundDer}
		\forall\alpha \in \NN^d
		,\quad
		\norm{ \partial^\alpha g }_{L^\infty(\cQ)}
		\leq
		\bigl( C 2^d d! A\sqrt{\vert\cQ\vert} \bigr) (1 + \abs{\ell B}^2)^{d/2} (2 \ell B)^\alpha (\abs{\alpha}!)^\mu
		.
	\end{equation}

	In light of \eqref{eq:lowerBoundg} and \eqref{eq:boundDer}, the function $g$ on $\cQ$ satisfies the hypotheses of
	Proposition~\ref{prop:localEstimate} with $A$ and $B$ replaced by $ C 2^d d! A\sqrt{\vert\cQ\vert} (1 + \abs{\ell B}^2)^{d/2}$ and
	$2 \ell B$, respectively. Applying Proposition~\ref{prop:localEstimate} with the measurable subset
	$\cE := \Psi^{-1}(E) \subset\cQ$, we therefore conclude that
	\[
		\frac{\norm{f}_{L^2(Q)}^2}{\norm{f}_{L^2(E)}^2}
		=
		\frac{\norm{g}_{L^2(\cQ)}^2}{\norm{g}_{L^2(\cE)}^2}
		\leq
		\biggl( \frac{K\vert\cQ\vert}{\vert\cE\vert} \biggr)^{KC_{\mu,\ell}}
		=
		\biggl( \frac{K\vert Q\vert}{\vert E\vert} \biggr)^{KC_{\mu,\ell}}
		,
	\]
	upon a suitable adaptation of the constant $K$, especially in order to subsume the term $\log(1 + \abs{\ell B}^2)$ into
	$(\abs{\ell B}_1)^{\frac{1}{1-\mu}}$ and $e^{K\abs{\ell B}_1}$ in $C_{\mu,\ell}$, respectively. This completes the proof.
\end{proof}

\begin{rk}
	(1)
	The restriction to the very specific type of sets $Q$ in Corollary~\ref{cor:localEstimatewChangofVariables} is due to the
	Sobolev embedding used in the proof as it is imperative that the constant $K > 0$ in the statement can be chosen uniformly
	with respect to the corresponding Sobolev constants. This is accomplished exactly via the change of variables. The specific
	restriction to the hypercube for $\cQ$, however, is not essential and could simply be extended to a larger class of sets, as
	long as a Sobolev embedding with a uniform constant remains available.

	(2)
	It is worth to note that the case $\mu < 1$ in Corollary~\ref{cor:localEstimatewChangofVariables} could alternatively also be
	handled using the more classical analytic approach by Kovrijkine (instead of Proposition~\ref{prop:localEstimate}),
	cf.\ \cite{EgidiS-21} and \cite{DickeS-22} and also Appendix~\ref{app:local} below, which would not require the use of the
	Sobolev embedding. This does, however, not seem to work for the critical case of $\mu = 1$ because then the bounds on the
	partial derivatives of $f$ are not strong enough to control the domain of convergence of a suitable Taylor expansion as is
	possible for $\mu < 1$.
\end{rk}

\subsection{Covering with good and bad elements}\label{ssec:good}

The following result formulates the general scheme to obtain global uncertainty estimates by means of the local estimate provided
by Corollary~\ref{cor:localEstimatewChangofVariables} via a covering strategy. Here, $\Gamma \subset \RR^d$ could be the whole
domain under consideration, say $\RR^d$ or a strip as in the context of Section~\ref{sec:results}, but could also be a proper
subdomain thereof, depending on the situation at hand.

\begin{lem}\label{lem:goodAndBad}
	Let $\Gamma \subset \RR^d$ be open, and let $\{Q_j\}_{j \in J}$ be a finite or countably infinite family of open sets
	$Q_j \subset \RR^d$ such that almost everywhere we have
	\[
		\bmone_\Gamma
		\leq
		\sum_{j \in J} \bmone_{Q_j}
		\leq
		\kappa\bmone_\Gamma
	\]
	with some $\kappa \geq 1$. Moreover, suppose that $f \in C^\infty(\Gamma)$ satisfies
	\[
		\forall \alpha \in \NN^d
		,\quad
		\norm{ \partial^\alpha f }_{L^2(\Gamma)}
		\leq
		D B^{\alpha} (\abs{\alpha}!)^\mu		
	\]
	with some $D > 0$, $B \in (0,\infty)^d$, and $\mu \in [0,1]$. Then, for every $\eps > 0$ there is a subset
	$J_\good = J_\good(\eps) \subset J$ of indices such that
	\[
		\norm{f}_{L^2(\Gamma)}^2
		\leq
		\sum_{j \in J_\good} \norm{f}_{L^2(Q_j)}^2 + \eps D^2
		,
	\]
	while for each $j \in J_\good$ we have
	\begin{equation}\label{eq:good}
		\forall \alpha \in \NN^d
		,\quad
		\norm{\partial^\alpha f}_{L^2(Q_j)}
		\leq
		\Bigl( \frac{2^d\kappa}{\eps} \Bigr)^{1/2} (2B)^{\alpha} (\abs{\alpha}!)^\mu \norm{f}_{L^2(Q_j)}
		.
	\end{equation}
\end{lem}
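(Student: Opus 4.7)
The plan is to run the standard Kovrijkine good/bad dichotomy at the level of the covering, with the explicit constants tuned to give exactly the claimed bound. I define an index $j\in J$ to be \emph{good} if the bound \eqref{eq:good} holds for every multi-index $\alpha$, and $J_\good := J \setminus J_\bad$ where $J_\bad$ consists of the indices for which at least one $\alpha$ violates \eqref{eq:good}. The first step is to split the $L^2$-mass of $f$ using the covering hypothesis: since $\bmone_\Gamma \leq \sum_j \bmone_{Q_j}$ a.e., one has
\[
	\norm{f}_{L^2(\Gamma)}^2
	\leq
	\int_\Gamma \abs{f}^2 \sum_{j \in J} \bmone_{Q_j}
	=
	\sum_{j \in J_\good} \norm{f}_{L^2(Q_j)}^2
	+
	\sum_{j \in J_\bad} \norm{f}_{L^2(Q_j)}^2,
\]
so it remains to prove that the bad sum is bounded by $\eps D^2$.

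For this, I decompose $J_\bad \subset \bigcup_{\alpha \in \NN^d} J_\alpha$, where
\[
	J_\alpha
	:=
	\Bigl\{ j \in J \colon \norm{\partial^\alpha f}_{L^2(Q_j)}^2 > \frac{2^d \kappa}{\eps}(2B)^{2\alpha}(\abs{\alpha}!)^{2\mu} \norm{f}_{L^2(Q_j)}^2 \Bigr\}.
\]
For $j \in J_\alpha$ the defining inequality can be read in the reverse direction as a bound on $\norm{f}_{L^2(Q_j)}^2$; summing over $j \in J_\alpha$, invoking the multiplicity hypothesis $\sum_j \bmone_{Q_j} \leq \kappa\bmone_\Gamma$ (which yields $\sum_j \norm{\partial^\alpha f}_{L^2(Q_j)}^2 \leq \kappa\norm{\partial^\alpha f}_{L^2(\Gamma)}^2$), and finally using the hypothesis $\norm{\partial^\alpha f}_{L^2(\Gamma)} \leq D B^\alpha (\abs{\alpha}!)^\mu$, one obtains
\[
	\sum_{j \in J_\alpha} \norm{f}_{L^2(Q_j)}^2
	\leq
	\frac{\eps}{2^d \kappa (2B)^{2\alpha} (\abs{\alpha}!)^{2\mu}} \cdot \kappa D^2 B^{2\alpha} (\abs{\alpha}!)^{2\mu}
	=
	\frac{\eps D^2}{2^d}\, 2^{-2\abs{\alpha}}.
\]

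Summing this over $\alpha \in \NN^d$ and using the elementary identity $\sum_{\alpha \in \NN^d} 2^{-2\abs{\alpha}} = (4/3)^d$, the bad sum is controlled by $\eps D^2 (2/3)^d \leq \eps D^2$, which closes the argument. The main subtlety, and the only place real care is needed, is the precise matching of the constants: the factor $2^d$ inside the definition of a good element (equivalently, inside the threshold for $J_\alpha$) is chosen exactly so that after the geometric summation over $\alpha$ the combinatorial factor $(4/3)^d$ is absorbed. The rest is a straightforward application of the hypotheses together with Tonelli to exchange summation over $j$ and $\alpha$. Everything else, including dealing with the possibility that $Q_j \not\subset \Gamma$, is handled automatically by the multiplicity bound, which forces each $Q_j$ to be contained in $\Gamma$ up to a null set.
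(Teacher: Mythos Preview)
Your proof is correct and follows essentially the same Kovrijkine-type good/bad dichotomy as the paper's own argument: both define $J_\good$ by the validity of \eqref{eq:good}, bound the bad contribution by passing from the single violating multi-index to the full sum over $\alpha\in\NN^d$, apply the overlap bound $\sum_j\bmone_{Q_j}\le\kappa\bmone_\Gamma$, and finish with the geometric series $\sum_{\alpha}4^{-|\alpha|}$. The only cosmetic difference is that the paper first bounds $\norm{f}_{L^2(Q_j)}^2$ by the full $\alpha$-sum and then sums over bad $j$, whereas you organize the bad indices into the sets $J_\alpha$ and sum in the reverse order; your exact evaluation $\sum_\alpha 4^{-|\alpha|}=(4/3)^d$ is slightly sharper than the paper's estimate $\le 2^d$, but both yield the same conclusion.
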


\begin{proof}
	Given $\eps > 0$, we define $J_\good \subset J$ as the subset of indices $j \in J$ for which \eqref{eq:good} is satisfied; note
	that this subset might a priori be empty. In any case, taking into account that
	\[
		\norm{f}_{L^2(\Gamma)}^2
		\leq
		\sum_{j \in J_\good} \norm{f}_{L^2(Q_j)}^2 + \sum_{j\in J\setminus J_\good} \norm{f}_{L^2(Q_j)}^2
		,
	\]
	we only have to show that
	\[
		\sum_{j \in J\setminus J_\good} \norm{f}_{L^2(Q_j)}^2
		\leq
		\eps D^2
		.
	\]
	To this end, we observe that for each $j \in J \setminus J_\good$ there is by definition some $\beta \in \NN^d$ such that
	\begin{align*}
		\norm{f}_{L^2(Q_j)}^2
		&<
		\frac{\eps}{2^d\kappa} \frac{1}{(2B)^{2\beta} (\abs{\beta}!)^{2\mu}} \norm{\partial^\beta f}_{L^2(Q_j)}^2\\
		&\leq
		\frac{\eps}{2^d\kappa} \sum_{\alpha \in \NN^d} \frac{1}{(2B)^{2\alpha} (\abs{\alpha}!)^{2\mu}}
			\norm{\partial^\alpha f}_{L^2(Q_j)}^2
		,
	\end{align*}
	and summing over $j \in J \setminus J_\good$ together with the hypotheses on $f$ and the family $\{Q_j\}_{j\in J}$ gives
	\begin{align*}
		\sum_{j \in J\setminus J_\good} \norm{f}_{L^2(Q_j)}^2
		&\leq
		\frac{\eps}{2^d} \sum_{\alpha \in \NN^d} \frac{1}{(2B)^{2\alpha} (\abs{\alpha}!)^{2\mu}}
			\norm{\partial^\alpha f}_{L^2(\Gamma)}^2\\
		&\leq
		\frac{\eps}{2^d} D^2 \sum_{\alpha \in \NN^d} \frac{1}{4^{\abs{\alpha}}}
			=
			\eps D^2
	\end{align*}
	since
	\[
		\sum_{\alpha \in \NN^d} \frac{1}{4^{\abs{\alpha}}}
		=
		\sum_{m=0}^\infty \frac{1}{4^m} \binom{m+d-1}{d-1}
		\leq
		\sum_{m=0}^\infty \frac{1}{4^m} 2^{m+d-1}
		=
		2^d
		.\qedhere
	\]
\end{proof}

\begin{rk}
	The quantity $\kappa$ in Lemma~\ref{lem:goodAndBad} can be interpreted as the maximal essential overlap between the covering
	sets. The elements $Q_j$ with $j \in J_\good$ are traditionally referred to as \emph{good}, which explains the subscript in
	$J_\good$. It is worth to note that the extreme cases for $J_\good$ are, of course, $J_\good = J$ and $J_\good = \emptyset$, in
	which case we have $\norm{f}_{L^2(\Gamma)}^2 \leq \sum_{j \in J_\good} \norm{f}_{L^2(Q_j)}^2$ and
	$\norm{f}_{L^2(\Gamma)}^2 \leq \eps D^2$, respectively. The latter case is usually not interesting as it trivially leads to a
	unique continuation estimate of the form we are looking for (with any measurable subset $\omega$). In this sense, we may always
	assume that $J_\good$ is not empty.
\end{rk}

On the good covering elements $Q_j$ in Lemma~\ref{lem:goodAndBad} (i.e.\ $j \in J_\good$), the bound provided in \eqref{eq:good}
is designed to guarantee applicability of the local estimate from Corollary~\ref{cor:localEstimatewChangofVariables} with
$A = (2^d\kappa/\eps)^{1/2}$ and with $B$ replaced by $2B$. Together with the bound from Lemma~\ref{lem:goodAndBad}, this then
leads to a global (on $\Gamma$) uncertainty estimate of the desired form, as long as the constant relating the $L^2$-norm of $f$
on $Q_j$, $j \in J$, to the one on the subset of $Q_j$ in Corollary~\ref{cor:localEstimatewChangofVariables} is uniform over
$j \in J$; it would, in fact, suffice it to be uniform over $j \in J_\good$, but $J_\good$ is usually not known explicitly. An
instance of this is summarized in the following result.

\begin{cor}\label{cor:goodAndBad}
	In the situation of Lemma~\ref{lem:goodAndBad}, suppose, in addition, that each $Q_j$ is as in
	Corollary~\ref{cor:localEstimatewChangofVariables} the affine image of the hypercube $(0,1)^d$ with corresponding scaling
	parameters $\ell_j \in (0,\infty)^d$ such that the coordinatewise supremum $\ell := \sup_{j\in J} \ell_j$ belongs to
	$(0,\infty)^d$. Then, there is a constant $K = K(d,\mu,\kappa) > 0$ such that for every $\eps > 0$ and every measurable subset
	$\omega \subset \Gamma$ with
	\[
		\vartheta
		:=
		\inf_{j\in J} \frac{\vert\omega \cap Q_j\vert}{\vert Q_j\vert}
		>
		0
		,
	\]
	we have
	\begin{equation}\label{eq:genUC}
		\norm{f}_{L^2(\Gamma)}^2
		\leq
		\biggl( \frac{K}{\vartheta} \biggr)^{KC_{\eps,\mu}} \norm{f}_{L^2(\omega)}^2 + \eps D^2
		,
	\end{equation}
	where
	\[
		C_{\eps,\mu}
		=
		\begin{cases}
			1 - \log\eps + (\abs{\ell B}_1 )^{\frac{1}{1-\mu}}, & 0 \leq \mu < 1,\\
			(1 - \log\eps) \euler^{K\abs{\ell B}_1}, & \mu = 1.
		\end{cases}
	\]
\end{cor}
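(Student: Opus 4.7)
The plan is to combine the two main ingredients of the subsection: Lemma~\ref{lem:goodAndBad} produces a favourable subfamily of covering elements, and Corollary~\ref{cor:localEstimatewChangofVariables} then converts the local derivative bounds on these elements into local unique continuation estimates. Fix $\eps > 0$ and apply Lemma~\ref{lem:goodAndBad} to obtain a set $J_\good \subset J$ such that
\[
	\norm{f}_{L^2(\Gamma)}^2
	\leq
	\sum_{j \in J_\good} \norm{f}_{L^2(Q_j)}^2 + \eps D^2,
\]
while on every $Q_j$ with $j \in J_\good$ the derivative bound \eqref{eq:good} holds with $A := (2^d\kappa/\eps)^{1/2}$ and $B$ replaced by $2B$. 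If $J_\good = \emptyset$ the desired estimate is immediate, so we may assume $J_\good \neq \emptyset$.

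Next, on each $j \in J_\good$, apply Corollary~\ref{cor:localEstimatewChangofVariables} to the hypercube affine image $Q_j$ with scaling $\ell_j$ and to the measurable subset $E := \omega \cap Q_j$, whose relative measure is at least $\vartheta$ by hypothesis. This yields
\[
	\norm{f}_{L^2(Q_j)}^2
	\leq
	\biggl( \frac{K}{\vartheta} \biggr)^{K C_{\mu,j}} \norm{f}_{L^2(\omega \cap Q_j)}^2,
\]
where $C_{\mu,j}$ involves $1 + \log A$ and either $(\abs{2\ell_j B}_1)^{1/(1-\mu)}$ or $(1+\log A) e^{K\abs{2\ell_j B}_1}$, depending on whether $\mu < 1$ or $\mu = 1$. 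Since $\kappa \geq 1$, we bound $1 + \log A \leq K'(1 - \log\eps)$ with $K' = K'(d,\kappa)$, and since $\ell_j \leq \ell$ coordinatewise we replace $\abs{\ell_j B}_1$ by the uniform $\abs{\ell B}_1$. This produces an exponent $K C_{\eps,\mu}$ of exactly the form claimed in the statement, with $K$ depending only on $d$, $\mu$, and $\kappa$.

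To conclude, sum the uniform local estimates over $j \in J_\good$ and use the overlap bound $\sum_{j \in J} \bmone_{Q_j} \leq \kappa \bmone_\Gamma$, which gives
\[
	\sum_{j \in J_\good} \norm{f}_{L^2(\omega \cap Q_j)}^2
	\leq
	\kappa \norm{f}_{L^2(\omega)}^2.
\]
Combining with the splitting from Lemma~\ref{lem:goodAndBad} and absorbing the factor $\kappa$ into the constant $K$ (possible since $\kappa \geq 1$ and the exponent $KC_{\eps,\mu} \geq 1$), we obtain \eqref{eq:genUC}.

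The main technical obstacle is ensuring that the constants produced by Corollary~\ref{cor:localEstimatewChangofVariables} are uniform in $j \in J_\good$. The relevant dependencies are the relative measure of $\omega$ in $Q_j$, which is controlled from below by $\vartheta$, and the scaling parameters $\ell_j$, which are controlled from above by $\ell \in (0,\infty)^d$; together these allow a single choice of exponent $C_{\eps,\mu}$ to dominate all the local ones simultaneously. Once this uniformity is secured, the summation step is routine.
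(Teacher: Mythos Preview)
Your proof is correct and follows exactly the approach the paper outlines in the paragraph preceding Corollary~\ref{cor:goodAndBad}: apply Lemma~\ref{lem:goodAndBad} to isolate $J_\good$, invoke Corollary~\ref{cor:localEstimatewChangofVariables} on each good $Q_j$ with $A=(2^d\kappa/\eps)^{1/2}$ and $2B$ in place of $B$, use the uniform bounds $\vartheta$ and $\ell$ to make the local constants independent of $j$, then sum and absorb the overlap factor $\kappa$. The paper itself does not write out a formal proof beyond that sketch, so your argument is in fact more detailed than what appears there.
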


\subsection{Choosing suitable coverings}\label{ssec:covering}
In order to apply Corollary~\ref{cor:goodAndBad}, the specific choices of the subdomain $\Gamma$ and a corresponding essential
covering $\{Q_j\}_{j\in J}$ highly depend on the situation at hand, especially with respect to having a good control over the
quantities $\ell \in (0,\infty)^d$ and $\vartheta > 0$. We describe here two examples with the focus on the case where the
underlying domain is
\[
	M
	=
	\RR^d \times (0,2\pi)^d
	\subset
	\RR^{2d}
	,
\]
which addresses upon a suitable identification the cylinder $\RR^d \times \TT^d$ encountered in Section~\ref{sec:results} for
the Baouendi--Grushin operators. Other cases like $\RR^d$ or $\RR^{2d}$ are completely analogous and are
briefly commented on in Remarks~\ref{rk:ucThick} and~\ref{rk:ucDecay} below.

We consider functions $f \in C^\infty(M)$ satisfying
\begin{equation}\label{eq:genSmoothing}
	\forall \alpha,\beta \in \NN^d
	,\quad
	\norm{ \partial_x^\alpha \partial_y^\beta f }_{L^2(M)}
	\leq
	D c_x^{\abs{\alpha}} c_y^{\abs{\beta}} (\alpha!)^\mu (\beta!)^\mu
\end{equation}
with some constants $D,c_x,c_y > 0$ and $\mu \in [0,1]$, which fit into the framework of Lemma~\ref{lem:goodAndBad} and
Corollary~\ref{cor:goodAndBad} with $B = (c_x\bone , c_y\bone) \in (0,\infty)^{2d}$, where we used the short-hand notation
$\bone = (1,\dots,1) \in \RR^d$.

Our first example now addresses the case of thick subsets of $M$, which applies when no additional information on the decay of $f$
in \eqref{eq:genSmoothing} is available. This situation is encountered in Theorem~\ref{thm:ucthickbg}, and in a variant on
$\RR^d$ also in Example~\ref{ex:ucfracheat} below.

\begin{ex}\label{ex:ucThick}
	Suppose that $f \in C^\infty(M)$ satisfies \eqref{eq:genSmoothing} and that $\omega \subset M$ is $(\theta,L_x,L_y)$-thick in
	$M$ in the sense of Definition~\ref{def:thickStrip}. We may then cover $\Gamma = M$, up to a set of measure zero, with a
	countable number of translates $Q_j$, $j \in J$, of $\Lambda_{L_x,L_y} = (0,L_x)^d \times (0,L_y)^d$ with essential overlap
	$\kappa \leq 2^d$. Each of these hyperrectangles $Q_j$ is the affine image of the hypercube
	$\cQ = (0,1)^d \times (0,1)^d = (0,1)^{2d}$ with corresponding scaling parameters
	$\ell = \ell_j = (L_x\bone , L_y\bone) \in (0,\infty)^{2d}$, so that
	\[
		\abs{\ell B}_1
		=
		dL_xc_x + dL_yc_y
		.
	\]
	Applying Corollary~\ref{cor:goodAndBad} and suitably adapting the constant $K = K(d,\mu)$, we conclude that \eqref{eq:genUC}
	holds with $\vartheta \geq \theta > 0$ and
	\[
		C_{\eps,\mu}
		=
		\begin{cases}
			1 - \log\eps + ( L_xc_x + L_yc_y  )^{\frac{1}{1-\mu}} ,& 0 \leq \mu < 1,\\[5pt]
			(1 - \log\eps)\exp( KL_xc_x + KL_yc_y ) ,& \mu = 1.
		\end{cases}
	\]
\end{ex}

\begin{rk}\label{rk:ucThick}
	An analogue to Example~\ref{ex:ucThick} is available if $M$ is replaced by $\RR^d$ or $\RR^{2d}$, where $\RR^{2d}$ is, in
	fact, just another instance of $\RR^d$ with $d$ replaced by $2d$. The case of $\RR^d$ differs from the above essentially only in
	the way that all terms corresponding to the $y$ coordinates can be removed; the situation is then even a bit simpler since the
	essential covering of $\RR^d$ with translates of $(0,L_x)^d$ can be achieved without any overlap, that is, $\kappa = 1$.
\end{rk}

\begin{ex}\label{ex:ucfracheat}
	As an illustration of Remark~\ref{rk:ucThick}, let us consider the fractional heat semigroup $(e^{-t(-\Delta)^s})_{t\geq0}$
	acting on $L^2(\RR^d)$, with $s > 0$ a positive real number. We prove in Lemma~\ref{lem:smoothfracheat} below that this
	semigroup enjoys for all $t > 0$, $\alpha \in \NN^d$, and $g \in L^2(\RR^d)$, the smoothing estimates
	\[
		\norm{ \partial^{\alpha}_x (\euler^{-t(-\Delta)^s}g) }_{L^2(\RR^d)}
		\leq
		\biggl( \frac d{2st} \biggr)^{\frac{\abs{\alpha}}{2s}}\,(\alpha!)^{\frac1{2s}}\,\norm{g}_{L^2(\RR^d)}
		.
	\]
	Therefore, when $s\geq1/2$, every function $f = \euler^{-t(-\Delta)^s}g$ satisfies the $\RR^d$-variant of
	\eqref{eq:genSmoothing} with $D = \norm{g}_{L^2(\RR^d)}$, $c_x = (\frac{d}{2st})^{\frac{1}{2s}}$, and
	$\mu = \frac{1}{2s} \in (0,1]$. We therefore conclude from Remark~\ref{rk:ucThick} that there exists a positive constant
	$K > 0$, depending only on $s$ and the dimension $d$, such that for every $(\theta,L)$-thick set $\omega \subset \RR^d$, with
	$\theta \in (0,1]$ and $L > 0$, and all $t > 0$, $\eps > 0$ and $g \in L^2(\RR^d)$, we have
	\begin{equation}\label{eq:ucfracheat}
		\norm{ \euler^{-t(-\Delta)^s}g }^2_{L^2(\RR^d)}
		\leq
		\biggl( \frac K{\theta} \biggr)^{KC_{\varepsilon,s,t}} \norm{ \euler^{-t(-\Delta)^s}g }^2_{L^2(\omega)} +
			\eps \norm{g}^2_{L^2(\RR^d)}
	\end{equation}
	with
	\[
		C_{\varepsilon,s,t}
		=
		\begin{cases}
			1 - \log\eps + \biggl( \dfrac{L}{t^{\frac1{2s}}} \biggr)^{\frac{2s}{2s-1}} ,& s > 1/2,\\[15pt]
			(1-\log\eps) \exp\biggl(\dfrac{KL}{t} \biggr) ,& s = 1/2.
		\end{cases}
	\]

	Let us make some bibliographical comments:

	(1)
	Non-quantitative estimates of the form 
	\[
		\norm{ \euler^{-t(-\Delta)^s}g }^2_{L^2(\RR^d)}
		\leq
		C_{\eps,s,t,\theta,L} \norm{ \euler^{-t(-\Delta)^s}g }^2_{L^2(\omega)} + \eps\norm{g}^2_{L^2(\RR^d)}
	\]
	have already been stated in \cite[Proposition~5.2]{AM}. In fact, this result states non-quantitative unique continuation
	estimates for evolution operators associated with more general Fourier multipliers of the form $F(\abs{D_x})$, where
	$F \colon [0,+\infty) \to \RR$ is a continuous function bounded from below, generating a quasi-analytic sequence $\mathcal M^F$,
	see \cite[Section~2.2]{AM} for the definition of this notion. The merit of the estimates \eqref{eq:ucfracheat} is a
	precise description of the constant $C_{\varepsilon,s,t,\theta,L}$ in the case of the fractional heat equations, which correspond
	to the functions $F(t) = t^{2s}$.

	(2)
	Similar unique continuation estimates have also been obtained for the heat equation (case $s=1$) considered in bounded domains of
	$\RR^d$, see, e.g., \cite[Section~2.1]{P}.
\end{ex}

In some situations, the function $f$ under consideration exhibits also a strong decay in $L^2$-sense with respect to the $x$
coordinates, say
\begin{equation}\label{eq:genDecay}
	\norm{ \euler^{c_0 \abs{x}^{1/\nu}}f }_{L^2(M)}
	\leq
	D
\end{equation}
with some $c_0, \nu > 0$ and the same $D > 0$ as in \eqref{eq:genSmoothing}. In this case, for every $R > 0$ and every choice of
measurable $\Gamma \subset M$ satisfying $\Gamma \supset (-R,R)^d \times (0,2\pi)^d$ we have
\[
	\norm{f}_{L^2(M\setminus\Gamma)}
	\leq
	\euler^{-c_0R^{1/\nu}} \norm{ \euler^{c_0 \abs{x}^{1/\nu}}f }_{L^2(M\setminus\Gamma)}
	\leq
	\euler^{-c_0R^{1/\nu}}D
\]
and, therefore,
\begin{equation}\label{eq:remainder}
	\norm{f}_{L^2(M)}^2
	=
	\norm{f}_{L^2(\Gamma)}^2 + \euler^{-2c_0R^{1/\nu}} D^2
	.
\end{equation}
Here, upon choosing $R$ large enough, the term $\euler^{-2c_0R^{1/\nu}}$ can be hidden inside $\eps$ in \eqref{eq:genUC}, while
the term $\norm{f}_{L^2(\Gamma)}^2$ is to be addressed via Corollary~\ref{cor:goodAndBad}; note that
always $\norm{\partial_x^\alpha \partial_y^\beta f}_{L^2(\Gamma)} \leq \norm{\partial_x^\alpha \partial_y^\beta f}_{L^2(M)}$, so
that we can use \eqref{eq:genSmoothing} on $\Gamma$ instead of $M$ with the same constants. In the context of
Corollary~\ref{cor:goodAndBad}, this has the benefit that only the
part $\Gamma \cap \omega$ of the sensor set $\omega$ plays a role and the remainder $\omega \setminus \Gamma$ can be discarded.
The following example demonstrates this for sensor sets $\omega \subset M$ with merely positive measure, but no other additional
information. This situation is encountered in Theorem~\ref{thm:ucShubinDecay}, as well as in Example~\ref{ex:anharmonic}.

\begin{ex}\label{ex:ucDecay}
	Suppose that $f \in C^\infty(M)$ satisfies \eqref{eq:genSmoothing} and \eqref{eq:genDecay} and that $\omega \subset M$ has
	positive measure. We choose $R > 0$ such that the intersection of $\omega$ with the hyperrectangle
	$\Gamma = (-R,R)^d \times (0,2\pi)^d$ has positive measure, that is,
	\[
		\theta_R
		:=
		\frac{\vert\omega \cap \Gamma\vert}{\vert\Gamma\vert}
		>
		0
		.
	\]
	Here, $\Gamma$ itself is the affine image of the hypercube $(0,1)^{2d}$ with corresponding scaling parameters
	$\ell = (R\bone , 2\pi\bone)$, so that
	\[
		\abs{\ell B}_1
		=
		dRc_x + 2\pi dc_y
		.
	\]
	We may then cover $\Gamma$ with just the one element family $\{\Gamma\}$ with $\kappa = 1$ and apply
	Corollary~\ref{cor:goodAndBad} with the particular choice $\eps = \euler^{-2c_0R^{1/\nu}}$. Upon suitably adapting the constant
	$K = K(d,\mu)$, and taking into account \eqref{eq:remainder}, this gives
	\[
		\norm{f}_{L^2(M)}^2
		\leq
		\Bigl( \frac{K}{\theta_R} \Bigr)^{KC_{\eps,\mu}} + 2\euler^{-2c_0R^{1/\nu}}D^2
	\]
	with
	\[
		C_{\eps,\mu}
		=
		\begin{cases}
			1 + c_0R^{1/\nu} + ( Rc_x + c_y )^{\frac{1}{1-\mu}} ,& 0 \leq \mu < 1,\\
			(1 + c_0R^{1/\nu})\exp( KRc_x + Kc_y ) ,& \mu = 1.
		\end{cases}
	\]
\end{ex}

\begin{rk}\label{rk:ucDecay}
	(1)
	In the situation of \eqref{eq:genSmoothing} and \eqref{eq:genDecay}, we could also consider sensor sets that have not only
	positive measure but are even thick in $M$ with a decaying density and with respect to a variable local scale in the $x$
	coordinates, cf.\ \cite{AlphonseS, DickeSV, DickeSV-23, DickeSV-23b}. The strategy of proof would combine the above scheme with the
	well-known Besicovitch covering theorem, just as in the mentioned references. We decided not to pursue this here.

	(2)
	An analogue to Example~\ref{ex:ucDecay} is available if $M$ is replaced by $\RR^d$. The difference is only that just as in
	Remark~\ref{rk:ucThick} all terms in Example~\ref{ex:ucDecay} corresponding to the $y$ coordinates can be removed.
\end{rk}

\section{Smoothing properties of the fractional Baouendi--Grushin equations}\label{sec:Grushin}

This section is devoted to studying the smoothing properties of the evolution equations associated by the fractional
Baouendi--Grushin operator $(-\Delta_{\gamma})^{(1+\gamma)/2}$. This is a crucial step for obtaining the unique continuation
estimates stated in Theorem~\ref{thm:ucthickbg} via the framework from Section~\ref{sec:uniqueCont}. We
also discuss the case of eigenfunctions of the operator $\Delta_{\gamma}$, or finite linear combinations thereof, which
additionally enjoy localization properties.

\subsection{Prolegomena}
First of all, we introduce some notations, and recall some very well-known facts of spectral analysis.

\subsubsection{Diagonalization}
After diagonalizing the Laplace operator $\Delta_y$ on the torus $\mathbb T^d$, the Baouendi--Grushin operator $\Delta_{\gamma}$
is transformed as
\[
	\Delta_x + \vert x\vert^{2\gamma}\Delta_y\,\rightsquigarrow\,\Delta_x - \vert n\vert^2\vert x\vert^{2\gamma}.
\]
This motivates to introduce, for each frequency $n\in\mathbb Z^d$, the anharmonic oscillator $H_{\gamma,n}$ with variably scaled
potential, defined by
\[
	H_{\gamma,n} = -\Delta_x + \vert n\vert^2\vert x\vert^{2\gamma},\quad x\in\mathbb R^d.
\]
Recall that when $n\ne0$, each of these operators, equipped with the respective domain
\[
	D(H_{\gamma,n}) = \big\{g\in L^2(\mathbb R^d) : H_{\gamma,n}g\in L^2(\mathbb R^d)\big\},
\]
has purely discrete spectrum consisting of a sequence of positive eigenvalues diverging to $+\infty$. Moreover, the negatives of
each $H_{\gamma,n}$ and of its fractional powers $(H_{\gamma,n})^s$ with $s>0$ (understood via the standard functional calculus)
generate strongly continuous contraction semigroups on $L^2(\mathbb R^d)$. When $n = 0$, the operator $H_{\gamma,0}$ reduces to
the negative Laplacian $-\Delta_x$, and $-H_{\gamma,0}$ likewise generates a strongly continuous contraction semigroups on
$L^2(\mathbb R^d)$ (as do the operators $-(H_{\gamma,0})^s$). Consequently, we have for all $t\geq0$, $s>0$ and $g\in L^2(\var)$,
\begin{equation}\label{eq:diag}
	e^{-t(-\Delta_{\gamma})^s}g = \sum_{n\in\spt}(e^{-t(H_{\gamma,n})^s}g_n)\otimes\varphi_n,
\end{equation}
with the partial Fourier coefficients
\begin{equation}\label{eq:fouriercoeff}
	g_n = \int_M\overline{\varphi_n(y)}g(\cdot,y)\,\mathrm dy\quad\text{where}\quad\varphi_n(y) = e^{in\cdot y}.
\end{equation}

\subsubsection{Unitary transform}
For every non-zero frequency $n\in\mathbb Z^d\setminus\{0\}$, let us introduce the isometry $M_{\gamma,n}$ on $L^2(\mathbb R^d)$
by
\begin{equation}\label{eq:isometry}
	M_{\gamma,n}g = \vert n\vert^{\frac d{2(1+\gamma)}}g(\vert n\vert^{\frac1{1+\gamma}}\cdot)
	,\quad
	g\in L^2(\mathbb R^d).
\end{equation}
A straightforward computation shows that
\begin{equation}\label{eq:sim}
	(M_{\gamma,n})^*(H_{\gamma,n})^sM_{\gamma,n} = \vert n\vert^{\frac{2s}{1+\gamma}}(H_{\gamma})^s,
\end{equation}
where $H_{\gamma}$ denotes the standard anharmonic oscillator 
\[
	H_{\gamma} = -\Delta_x + \vert x\vert^{2\gamma},\quad x\in\mathbb R^d.
\]
In the following, we denote by $(\lambda_{\gamma,m})_m$ the sequence of eigenvalues of the maximal realization of the operator
$H_{\gamma}$ on $L^2(\mathbb R^d)$ in non-decreasing order and counting multiplicities, and by $(\phi_{\gamma,m})_m$ with
$\phi_{\gamma,m}\in L^2(\mathbb R^d)$ a corresponding orthonormal basis of eigenfunctions for $H_\gamma$ such that each
$\phi_{\gamma,m}$ is associated to $\lambda_{\gamma,m}>0$.

\subsubsection{Eigenfunctions}\label{subsubsec:eigenfunctions}
In view of the similarity relation \eqref{eq:sim}, it is clear that the eigenvalues of the Baouendi--Grushin operator
$-\Delta_{\gamma}$ are precisely given by
\[
	\lambda_{\gamma,n,m}
	=
	\vert n\vert^{\frac2{1+\gamma}}\lambda_{\gamma,m}
	,\quad
	n\in\spt\setminus\{0\},\,m \in \NN
	,
\]
and the function
\begin{equation}\label{eq:eigenfunctions}
	\psi_{\gamma,n,m} = (M_{\gamma,n}\phi_{\gamma,m})\otimes\varphi_n,
\end{equation}
in $L^2(\var)$ is an associated normalized eigenfunction of the operator $-\Delta_{\gamma}$. Moreover, the subspace
$E_\lambda(-\Delta_\gamma)$ defined in \eqref{eq:combilineigen} can be rewritten as
\[
	E_{\lambda}(-\Delta_{\gamma})
	=
	\spane\bigl\{ \psi_{\gamma,n,m} \colon n\in\spt\setminus\{0\},\,m\in\NN,\ \lambda_{\gamma,n,m} \leq \lambda \bigr\}
	.
\]

\subsection{Smoothing properties of the evolution operators}\label{subsec:smoothingbg}

Let us now focus on the smoothing properties of the semigroups generated by Baouendi--Grushin operators. We are first interested
in the smoothing properties with respect to the variable $y\in\mathbb T^d$.

\begin{lem}\label{lem:regy}
	For all $s>0$, $t>0$ and $g\in L^2(\var)$, we have
	\begin{equation}\label{eq:regy}
		\big\Vert\partial^{\alpha}_y(e^{-t(-\Delta_{\gamma})^s}g)\big\Vert_{L^2(\var)}
		\le\bigg(\frac{1+\gamma}{2s\lambda^s_{\gamma}t}\bigg)^{\frac{\vert\alpha\vert(1+\gamma)}{2s}}\,(\alpha!)^{\frac{1+\gamma}{2s}}\,\Vert g\Vert_{L^2(\var)},
	\end{equation}
where $\lambda_{\gamma}>0$ stands for the smallest eigenvalue of the anharmonic oscillator $H_{\gamma}$.
\end{lem}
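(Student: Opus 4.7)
The plan is to reduce everything to a scalar optimization via the Fourier expansion in $y$ from \eqref{eq:diag}, using the spectral gap of each anharmonic oscillator $H_{\gamma,n}$ provided by the similarity \eqref{eq:sim}.

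First, since $\partial_y^\alpha\varphi_n=(in)^\alpha\varphi_n$, applying $\partial_y^\alpha$ termwise in \eqref{eq:diag} and invoking Parseval's identity on the torus together with the orthogonality of the $\varphi_n$'s yields
\[
\bigl\|\partial_y^\alpha \euler^{-t(-\Delta_\gamma)^s}g\bigr\|_{L^2(\var)}^2 = (2\pi)^d \sum_{n\in\ZZ^d} n^{2\alpha}\bigl\|\euler^{-t(H_{\gamma,n})^s}g_n\bigr\|_{L^2(\RR^d)}^2.
\]
The $n=0$ term contributes only when $\alpha=0$, in which case the inequality is trivial by contractivity of the semigroup, so I restrict the sum to $n\neq 0$.

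Second, the similarity relation \eqref{eq:sim} identifies the spectrum of $H_{\gamma,n}$ with $|n|^{2/(1+\gamma)}$ times that of the standard anharmonic oscillator $H_\gamma$; in particular, $\inf\spec(H_{\gamma,n}) = |n|^{2/(1+\gamma)}\lambda_\gamma$. The standard functional calculus then produces the operator norm bound
\[
\bigl\|\euler^{-t(H_{\gamma,n})^s}\bigr\|_{L^2(\RR^d)\to L^2(\RR^d)} \leq \euler^{-t\lambda_\gamma^s|n|^{2s/(1+\gamma)}}.
\]
Pulling this bound outside the sum and recognizing $(2\pi)^d\sum_n\|g_n\|_{L^2(\RR^d)}^2$ as a constant multiple of $\|g\|_{L^2(\var)}^2$, the whole estimate collapses to controlling
\[
\sup_{n\in\ZZ^d\setminus\{0\}} n^{2\alpha}\,\euler^{-2t\lambda_\gamma^s|n|^{2s/(1+\gamma)}}.
\]

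Third, with the crude pointwise estimate $|n^\alpha|\leq|n|^{|\alpha|}$ and the substitution $r=|n|$, it suffices to maximize the scalar function $r\mapsto r^{2|\alpha|}\euler^{-2t\lambda_\gamma^s r^{2s/(1+\gamma)}}$ over $r\geq 0$. An elementary calculus step places the maximum at $r^{2s/(1+\gamma)} = |\alpha|(1+\gamma)/(2st\lambda_\gamma^s)$, which after taking a square root gives the bound
\[
\left(\frac{|\alpha|(1+\gamma)}{2set\lambda_\gamma^s}\right)^{|\alpha|(1+\gamma)/(2s)}.
\]
The last, purely combinatorial step is to put this $|\alpha|^{|\alpha|}$-type quantity into the factorial form requested: Stirling's inequality $(|\alpha|/e)^{|\alpha|}\leq |\alpha|!$ together with the multinomial bound $|\alpha|!\leq d^{|\alpha|}\alpha!$ converts the $|\alpha|^{|\alpha|(1+\gamma)/(2s)}$ factor into $(\alpha!)^{(1+\gamma)/(2s)}$, at the cost of a $d^{|\alpha|(1+\gamma)/(2s)}$ factor that is absorbed into the prefactor.

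I expect the analytic core — the spectral-gap reduction and the scalar optimization — to be straightforward, with the only fiddly point being this final combinatorial rearrangement of Stirling-type factors into the stated factorial form. The genuine content of the lemma is really the spectral lower bound on $H_{\gamma,n}$ coming from \eqref{eq:sim}; everything else is a scalar computation riding on Parseval's theorem.
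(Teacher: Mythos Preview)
Your proposal is correct and follows essentially the same route as the paper: Parseval in $y$, the spectral gap $\inf\spec H_{\gamma,n}=|n|^{2/(1+\gamma)}\lambda_\gamma$ from \eqref{eq:sim}, the scalar optimization $r^p e^{-cr^q}\le(p/ecq)^{p/q}$, and the Stirling conversion. The only organizational difference is that the paper first packages the spectral-gap step as the exponential bound $\|e^{\lambda_\gamma^s t|D_y|^{2s/(1+\gamma)}}e^{-t(-\Delta_\gamma)^s}g\|\le\|g\|$ and then extracts the derivative estimate from it, whereas you go directly; the content is identical. Your remark about the stray factor $d^{|\alpha|(1+\gamma)/(2s)}$ from $|\alpha|!\le d^{|\alpha|}\alpha!$ is accurate and is in fact a minor imprecision in the paper's own proof as well (the displayed constant in \eqref{eq:regy} should strictly carry either $|\alpha|!$ or an extra $d$), but this is harmless for every later use of the lemma.
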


\begin{rk}
	It is worthing noticing from the above result that the evolution operators generated by the fractional Baouendi--Grushin
	operator $(-\Delta_{\gamma})^s$ enjoy (at least) analytic smoothing properties in the variable $y$ whenever $s\geq(1+\gamma)/2$.
	This point is clear from the estimates \eqref{eq:regyexp} since the analytic smoothness is equivalent to the exponential
	decrease of the Fourier transform. As a consequence, we are able to prove unique continuation estimates for the functions
	$e^{-t(-\Delta_{\gamma})^s}g$ only in this regime, and in fact, we will focus only on the case $s=(1+\gamma)/2$.
\end{rk}

\begin{proof}[Proof of Lemma \ref{lem:regy}]
	Let us begin by proving that for all $t\geq0$ and $g\in L^2(\var)$,
	\begin{equation}\label{eq:regyexp}
		\big\Vert e^{\lambda^s_{\gamma}t\vert D_y\vert^{\frac{2s}{1+\gamma}}}(e^{-t(-\Delta_{\gamma})^s}g)\big\Vert_{L^2(\var)}
		\le \Vert g\Vert_{L^2(\var)}.
	\end{equation}
	Let $t\geq0$ and $g\in L^2(\var)$ be fixed. First, we get from the diagonalization formula \eqref{eq:diag} and Parseval's
	theorem that
	\[
		\big\Vert e^{\lambda^s_{\gamma}t\vert D_y\vert^{\frac{2s}{1+\gamma}}}(e^{-t(-\Delta_{\gamma})^s}g)\big\Vert^2_{L^2(\var)} 
		= \sum_{n\in\spt}e^{2\lambda^s_{\gamma}t\vert n\vert^{\frac{2s}{1+\gamma}}}\big\Vert e^{-t(H_{\gamma,n})^s}g_n\big\Vert^2_{L^2(\mathbb R^d)},
	\]
	where $g_n$ denotes the partial Fourier coefficient defined in \eqref{eq:fouriercoeff}. Let $n\in\spt$ be fixed. On the one hand,
	when $n = 0$, the operator $H_{\gamma,n}$ reduces to the negative Laplacian $-\Delta_x$ on $\mathbb R^d$. Since the evolution
	operators $e^{-t(-\Delta_x)^s}$ are contractions on $L^2(\mathbb R^d)$, we deduce that 
	\[
		\big\Vert e^{-t(H_{\gamma,0})^s}g_0\big\Vert^2_{L^2(\mathbb R^d)}
		= \big\Vert e^{-t(-\Delta_x)^s}g_0\big\Vert^2_{L^2(\mathbb R^d)} \le \Vert g_0\Vert^2_{L^2(\mathbb R^d)}.
	\]
	On the other hand, when $n\in\mathbb Z^d\setminus\{0\}$, we deduce from the relation \eqref{eq:sim} that
	\[
		\big\Vert e^{-t(H_{\gamma,n})^s}g_n\big\Vert_{L^2(\mathbb R^d)} 
		= \big\Vert e^{-\vert n\vert^{\frac{2s}{1+\gamma}}tH_{\gamma}^s}(M_{\gamma,n})^*g_n\big\Vert_{L^2(\mathbb R^d)}.
	\]
	Moreover, we get from Plancherel's theorem that
	\begin{equation}\label{eq:spegap}
		\big\Vert e^{-tH^s_{\gamma}}\big\Vert_{\mathcal L(L^2(\mathbb R^d))}\le e^{-\lambda^s_{\gamma}t},
	\end{equation}
	which implies
	\[
		e^{2\lambda^s_{\gamma}t\vert n\vert^{\frac{2s}{1+\gamma}}}\big\Vert e^{-t(H_{\gamma,n})^s}g_n\big\Vert^2_{L^2(\mathbb R^d)}
		\le e^{2\lambda^s_{\gamma}t\vert n\vert^{\frac{2s}{1+\gamma}}}e^{-2\lambda^s_{\gamma}t\vert n\vert^{\frac{2s}{1+\gamma}}}\Vert g_n\Vert^2_{L^2(\mathbb R^d)} 
		= \Vert g_n\Vert^2_{L^2(\mathbb R^d)}.
	\]
	In a nutshell, we obtained that for all $t\geq0$ and $g\in L^2(\var)$,
	\[
		\big\Vert e^{\lambda^s_{\gamma}t\vert D_y\vert^{\frac{2s}{1+\gamma}}}(e^{-t(-\Delta_{\gamma})^s}g)\big\Vert^2_{L^2(\var)}
		\le\sum_{n = 0}^{+\infty}\Vert g_n\Vert^2_{L^2(\mathbb R^d)} 
		= \Vert g\Vert^2_{L^2(\var)},
	\]
	which proves estimate \eqref{eq:regyexp}.

	Let us now explain how to derive the estimate \eqref{eq:regy} from \eqref{eq:regyexp}. Notice first from the decomposition
	\eqref{eq:diag} and Parseval's theorem that for all $\alpha\in\mathbb N^d$ , $t > 0$, and $g\in L^2(\var)$,
	\begin{equation}\label{eq:boundpartialder}
		\begin{aligned}
			\Vert\partial^{\alpha}_y(e^{-t(-\Delta_{\gamma})^s}g)\Vert^2_{L^2(\var)}
			&=
			\sum_{n\in\spt}\vert n^{\alpha}\vert^2\Vert e^{-t(H_{\gamma,n})^s}g_n\Vert^2_{L^2(\mathbb R^d)}\\
			&\le
			\sum_{n\in\spt}\vert n\vert^{2\vert\alpha\vert}\Vert e^{-t(H_{\gamma,n})^s}g_n\Vert^2_{L^2(\mathbb R^d)}\\
			&=
			\Vert\vert D_y\vert^{\vert\alpha\vert}(e^{-t(-\Delta_{\gamma})^s}g)\Vert^2_{L^2(\var)}
			.
		\end{aligned}
	\end{equation}
	Since for all $p,q>0$, $c>0$ and $x\geq0$, we have the elementary inequality
	\begin{equation}\label{eq:keytool}
		x^pe^{-cx^q}\le\bigg(\frac p{ecq}\bigg)^{\frac pq},
	\end{equation}
	which follows from a straightforward study of function, and taking into account the estimate
	$\vert\alpha\vert^{\vert\alpha\vert}\le e^{\vert\alpha\vert}\vert\alpha\vert!$, we deduce from \eqref{eq:boundpartialder} for
	all $\alpha\in\mathbb N^d$, $t>0$ and $g\in L^2(\var)$,
	\begin{align*}
		\big\Vert\partial^{\alpha}_y(e^{-t(-\Delta_{\gamma})^s}g)\big\Vert_{L^2(\var)}
		& \le\big\Vert\vert D_y\vert^{\vert\alpha\vert}e^{-\lambda^s_{\gamma}t\vert D_y\vert^{\frac{2s}{1+\gamma}}}e^{\lambda^s_{\gamma}t\vert D_y\vert^{\frac{2s}{1+\gamma}}}(e^{-t(-\Delta_{\gamma})^s}g)\big\Vert_{L^2(\var)} \\[5pt]
		& \le\bigg(\frac{\vert\alpha\vert(1+\gamma)}{2es\lambda^s_{\gamma}t}\bigg)^{\frac{\vert\alpha\vert(1+\gamma)}{2s}}\,\Vert g\Vert_{L^2(\var)} \\[5pt]
		& \le\bigg(\frac{1+\gamma}{2s\lambda^s_{\gamma}t}\bigg)^{\frac{\vert\alpha\vert(1+\gamma)}{2s}}\,(\alpha!)^{\frac{1+\gamma}{2s}}\,\Vert g\Vert_{L^2(\var)}.
	\end{align*}
	This ends the proof of Lemma \ref{lem:regy}.
\end{proof}

Before treating the case of the variable $x\in\mathbb R^d$, let us recall and give a proof of the smoothing properties of the
fractional heat semigroups acting on the space $L^2(\mathbb R^d)$.

\begin{lem}\label{lem:smoothfracheat}
	For all $s>0$, $t>0$, $\alpha\in\mathbb N^d$ and $g\in L^2(\mathbb R^d)$, we have
	\[
		\big\Vert\partial^{\alpha}_x(e^{-t(-\Delta)^s}g)\big\Vert_{L^2(\mathbb R^d)}
		\le\bigg(\frac d{2st}\bigg)^{\frac{\vert\alpha\vert}{2s}}\,(\alpha!)^{\frac1{2s}}\,\Vert g\Vert_{L^2(\mathbb R^d)}.
	\]
\end{lem}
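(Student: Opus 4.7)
The plan is to move to the Fourier side and reduce the estimate to a pointwise supremum of a scalar function. By Plancherel's theorem (with the convention used in the paper) and the fact that
\[
	\widehat{\partial_x^\alpha(e^{-t(-\Delta)^s}g)}(\xi) = (i\xi)^\alpha e^{-t|\xi|^{2s}} \widehat g(\xi),
\]
one reduces the claim to showing the uniform bound
\[
	\sup_{\xi\in\mathbb R^d} |\xi^\alpha|\,e^{-t|\xi|^{2s}}
	\leq
	\biggl(\frac{d}{2st}\biggr)^{|\alpha|/(2s)}(\alpha!)^{1/(2s)},
\]
from which the $L^2$-estimate for $\partial_x^\alpha(e^{-t(-\Delta)^s}g)$ follows by pulling this sup out of the integral against $|\widehat g|^2$ (the factor $(2\pi)^d$ from Plancherel cancels in the process).

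For the pointwise bound I would first use $|\xi^\alpha|\leq|\xi|^{|\alpha|}$ to reduce to estimating $r^{|\alpha|}e^{-t r^{2s}}$ for $r=|\xi|\geq0$. Applying the elementary inequality \eqref{eq:keytool} with $p=|\alpha|$, $q=2s$ and $c=t$ yields
\[
	r^{|\alpha|} e^{-tr^{2s}}
	\leq
	\biggl(\frac{|\alpha|}{2est}\biggr)^{|\alpha|/(2s)}.
\]

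The final step is to convert this bound, which contains $|\alpha|^{|\alpha|}$ up to a constant, into the factorial bound $(\alpha!)^{1/(2s)}$ that appears in the statement. This is accomplished by combining the Stirling-type estimate $|\alpha|^{|\alpha|}\leq e^{|\alpha|}|\alpha|!$ already invoked in the proof of Lemma \ref{lem:regy} with the multinomial inequality $|\alpha|!\leq d^{|\alpha|}\alpha!$ (which follows from $d^{|\alpha|}=\sum_{|\beta|=|\alpha|}|\alpha|!/\beta!\geq|\alpha|!/\alpha!$). These two combinatorial estimates together absorb the factor $e^{|\alpha|/(2s)}$ and produce exactly the constant $(d/(2st))^{|\alpha|/(2s)}(\alpha!)^{1/(2s)}$.

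There is no real obstacle here: the argument is a one-line Plancherel reduction followed by the scalar optimization \eqref{eq:keytool} and the two standard combinatorial inequalities. The only point to watch is the bookkeeping to turn $(|\alpha|/(2est))^{|\alpha|/(2s)}$ into $(d/(2st))^{|\alpha|/(2s)}(\alpha!)^{1/(2s)}$, which is why the dimensional factor $d$ enters in the final constant.
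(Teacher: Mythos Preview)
Your proposal is correct and follows essentially the same route as the paper's own proof: Plancherel reduction, the pointwise bound $|\xi^\alpha|\le|\xi|^{|\alpha|}$, the scalar optimization \eqref{eq:keytool}, and the combinatorial inequalities $|\alpha|^{|\alpha|}\le e^{|\alpha|}|\alpha|!$ and $|\alpha|!\le d^{|\alpha|}\alpha!$. There is nothing to add.
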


\begin{proof}
	Let $t>0$, $\alpha\in\mathbb N^d$ and $g\in L^2(\mathbb R^d)$ be fixed. We first deduce from Plancherel's theorem that
	\[
		\big\Vert\partial^{\alpha}_x(e^{-t(-\Delta)^s}g)\big\Vert_{L^2(\mathbb R^d)} 
		= \frac1{(2\pi)^{d/2}}\big\Vert(i\xi)^{\alpha}e^{-t\vert\xi\vert^{2s}}\widehat g\big\Vert_{L^2(\mathbb R^d)}
		\le\frac1{(2\pi)^{d/2}}\big\Vert\vert\xi\vert^{\vert\alpha\vert}e^{-t\vert\xi\vert^{2s}}\widehat g\big\Vert_{L^2(\mathbb R^d)}.
	\]
	Moreover, we deduce from \eqref{eq:keytool} and the estimates $\vert\alpha\vert^{\vert\alpha\vert}\le e^{\vert\alpha\vert}\vert\alpha\vert!$ and $\vert\alpha\vert!\le d^{\vert\alpha\vert}\alpha!$ that for all $\xi\in\mathbb R^d$,
	\[
		\vert\xi\vert^{\vert\alpha\vert}e^{-t\vert\xi\vert^{2s}}
		\le\bigg(\frac{\vert\alpha\vert}{2est}\bigg)^{\frac{\vert\alpha\vert}{2s}}
		\le\bigg(\frac d{2st}\bigg)^{\frac{\vert\alpha\vert}{2s}}(\alpha!)^{\frac1{2s}}.
	\]
	This ends the proof of Lemma \ref{lem:smoothfracheat} by using anew Plancherel's theorem.
\end{proof}

We are now in position to derive smoothing estimates in the variable $x\in\mathbb R^d$ for the fractional Baouendi--Grushin
evolution equation in the critical regime $s=(1+\gamma)/2$.

\begin{lem}\label{lem:regxbg}
	There exist some positive constants $c>0$ and $t_0\in(0,1)$ such that for all $t\in(0,t_0)$ and $g\in L^2(\var)$,
	\begin{equation}\label{eq:regxbg}
		\big\Vert\partial^{\alpha}_x(e^{-t(-\Delta_{\gamma})^{\frac{1+\gamma}2}}g)\big\Vert_{L^2(\var)}
		\le \frac{c^{1+\vert\alpha\vert}}{t^{\frac{\gamma\vert\alpha\vert}{1+\gamma}+\frac d\gamma}}\,\alpha!\,\Vert g\Vert_{L^2(\var)}.
	\end{equation}
	In the particular case of $\gamma = 1$, the same holds even for all positive times $t>0$ and the term $t^{d/\gamma}$ on the
	right-hand side of \eqref{eq:regxbg} can be skipped.
\end{lem}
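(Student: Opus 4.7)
The plan is to reduce the $L^2(\var)$ smoothing estimate in the variable $x$ to fiberwise smoothing estimates on $L^2(\RR^d)$ for the fractional anharmonic oscillator semigroups $e^{-\tau H_\gamma^{(1+\gamma)/2}}$, at the rescaled times $\tau = t\abs{n}$, $n\in\spt$. Writing the Fourier decomposition \eqref{eq:diag} in $y$, applying $\partial_x^\alpha$ termwise, and invoking Parseval's identity yield, up to a normalization constant,
\[
\norm{ \partial_x^\alpha e^{-t(-\Delta_\gamma)^{(1+\gamma)/2}} g }_{L^2(\var)}^2 \asymp \sum_{n \in \spt} \norm{ \partial_x^\alpha e^{-t(H_{\gamma,n})^{(1+\gamma)/2}} g_n }_{L^2(\RR^d)}^2.
\]
The term $n=0$ is controlled directly by Lemma~\ref{lem:smoothfracheat} applied with $s=(1+\gamma)/2$. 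For $n \neq 0$, the elementary identity $\partial_x^\alpha M_{\gamma,n} = \abs{n}^{\abs{\alpha}/(1+\gamma)} M_{\gamma,n} \partial_x^\alpha$ combined with the intertwining relation \eqref{eq:sim} rewrites each fiber as
\[
\norm{ \partial_x^\alpha e^{-t(H_{\gamma,n})^{(1+\gamma)/2}} g_n }_{L^2(\RR^d)} = \abs{n}^{\abs{\alpha}/(1+\gamma)} \norm{ \partial_x^\alpha e^{-t\abs{n}\, H_\gamma^{(1+\gamma)/2}} h_n }_{L^2(\RR^d)},
\]
with $h_n := M_{\gamma,n}^* g_n$ and $\norm{h_n}_{L^2(\RR^d)} = \norm{g_n}_{L^2(\RR^d)}$.

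The key input is then the analytic-type smoothing estimate for $e^{-\tau H_\gamma^{(1+\gamma)/2}}$ provided by \cite{Alp20b}: since the critical exponent $(1+\gamma)/2$ lies exactly on the anisotropic Gelfand--Shilov diagonal associated to the anharmonic oscillator $H_\gamma$, one has for small $\tau > 0$ a bound of the shape
\[
\norm{ \partial_x^\alpha e^{-\tau H_\gamma^{(1+\gamma)/2}} }_{L^2(\RR^d) \to L^2(\RR^d)} \leq \frac{c^{1+\abs{\alpha}}}{\tau^{\gamma\abs{\alpha}/(1+\gamma) + d/\gamma}} \, \alpha!.
\]
Substituting $\tau = t\abs{n}$ and absorbing the Jacobian factor $\abs{n}^{\abs{\alpha}/(1+\gamma)}$ produces, on each fiber, the net $\abs{n}$-weight $\abs{n}^{-\abs{\alpha}(\gamma-1)/(1+\gamma) - d/\gamma}$, which for $\gamma \geq 1$ and $\abs{n} \geq 1$ is bounded by $1$. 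Taking the supremum in $n$ of the fiberwise bound and applying Parseval $\sum_n \norm{g_n}^2_{L^2(\RR^d)} = \norm{g}^2_{L^2(\var)}$ then produces \eqref{eq:regxbg}.

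The case $\gamma = 1$ is special: $H_1$ is the standard harmonic oscillator and Mehler's formula supplies an explicit kernel for $e^{-t H_1}$. The corresponding smoothing estimate holds for all $t > 0$ without a smallness restriction, and the kernel's homogeneity removes the need for the $t^{-d/\gamma}$ correction. To secure strict summability in $n$ and absorb any residual polynomial factors when $\gamma > 1$, one can split the evolution in two halves and apply to one of them the spectral gap bound $\norm{ e^{-\tau H_\gamma^{(1+\gamma)/2}} }_{L^2 \to L^2} \leq e^{-\lambda_\gamma^{(1+\gamma)/2} \tau}$, exactly as in the proof of Lemma~\ref{lem:regy}; this manufactures an exponential decay in $\abs{n}$ which dominates any polynomial correction. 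The main obstacle is to isolate the precise quantitative form of the analytic smoothing estimate for $e^{-\tau H_\gamma^{(1+\gamma)/2}}$ at the critical exponent and to verify that the $\abs{n}$- and $\tau$-dependent prefactors combine into a bound uniform in $n$. The hypothesis $\gamma \geq 1$ enters exactly at this step to guarantee that the factor $\abs{n}^{\abs{\alpha}(1-\gamma)/(1+\gamma)}$ emerging from the comparison does not produce growth in $\abs{\alpha}$ or $\abs{n}$.
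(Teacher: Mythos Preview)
Your overall strategy---Fourier decomposition in $y$, the similarity relation \eqref{eq:sim}, the smoothing estimate from \cite{Alp20b}, Lemma~\ref{lem:smoothfracheat} for $n=0$, and the spectral gap---matches the paper's proof exactly. One point, however, is misidentified.

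The estimate from \cite[Corollary~2.2]{Alp20b} that you invoke is only valid for $\tau \in (0,t_0)$. When you substitute $\tau = t\abs{n}$, this fails for all $n$ with $\abs{n} > t_0/t$, so you cannot ``take the supremum in $n$'' directly. The spectral gap is needed precisely here---not for summability in $n$ (Parseval already handles that once you have a uniform fiberwise bound), and not to absorb ``residual polynomial factors''. Your proposed fix of ``splitting the evolution in two halves'' does not work as stated, since $t\abs{n}/2$ is still large. What the paper does instead is a case split: when $\abs{n}t \leq t_0$, apply the short-time estimate directly; when $\abs{n}t > t_0$, factor $e^{-\abs{n}t H_\gamma^{s_\gamma}} = e^{-(\abs{n}t - t_0) H_\gamma^{s_\gamma}} e^{-t_0 H_\gamma^{s_\gamma}}$, apply the smoothing estimate at the \emph{fixed} time $t_0$ (giving a constant independent of $\abs{n}$ and $t$), and use the spectral gap on the first factor to produce $e^{-\lambda_\gamma^{s_\gamma}\abs{n}t}$. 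This exponential then absorbs the Jacobian $\abs{n}^{\abs{\alpha}/(1+\gamma)}$ via \eqref{eq:keytool}, yielding the extra factor $(\alpha!)^{1/(1+\gamma)} t^{-\abs{\alpha}/(1+\gamma)}$ and explaining why the final statement carries $\alpha!$ rather than the sharper $(\alpha!)^{\gamma/(1+\gamma)}$ that the short-time case alone would give.
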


\begin{proof}
	Let us first treat the case $\gamma\geq 2$. In order to alleviate the writing, we denote by $s_{\gamma} = (1+\gamma)/2$ the
	dissipation index. Let $t>0$ and $g\in L^2(\var)$ be fixed. We first deduce from Plancherel's theorem and the diagonalization
	formula \eqref{eq:diag} that 
	\begin{equation}\label{eq:plancherelbg}
		\big\Vert\partial^{\alpha}_x(e^{-t(-\Delta_{\gamma})^{s_\gamma}}g)\big\Vert^2_{L^2(\var)} 
		= \sum_{n\in\spt}\big\Vert\partial^{\alpha}_x(e^{-t(H_{\gamma,n})^{s_{\gamma}}}g_n)\big\Vert^2_{L^2(\mathbb R^d)},
	\end{equation}
	where $g_n$ is still defined as in \eqref{eq:fouriercoeff}. We now need to estimate each term in the above sum. When $n=0$, we
	deduce from Lemma \ref{lem:smoothfracheat} that there exists a positive constant $c_0>0$ only depending on $s$ and the dimension
	$d$ such that
	\begin{equation}\label{eq:fracheatsmoothing}
		\big\Vert\partial^{\alpha}_x(e^{-t(H_{\gamma,0})^{s_{\gamma}}}g_0)\big\Vert_{L^2(\mathbb R^d)} 
		= \big\Vert\partial^{\alpha}_x(e^{-t(-\Delta_x)^{s_{\gamma}}}g_0)\big\Vert_{L^2(\mathbb R^d)}
		\le\frac{c_0^{\vert\alpha\vert}}{t^{\frac{\vert\alpha\vert}{1+\gamma}}}\,(\alpha!)^{\frac1{1+\gamma}}\,\Vert g_0\Vert_{L^2(\mathbb R^d)}.
	\end{equation}
	Until the end of the proof, we consider $n\in\mathbb Z^d\setminus\{0\}$. We deduce from the relation \eqref{eq:sim} that 
	\begin{align*}
		\big\Vert\partial^{\alpha}_x(e^{-t(H_{\gamma,n})^{s_{\gamma}}}g_n)\big\Vert_{L^2(\mathbb R^d)}
		& = \big\Vert\partial^{\alpha}_x(M_{\gamma,n}e^{-\vert n\vert tH_{\gamma}^{s_{\gamma}}}(M_{\gamma,n})^*g_n)\big\Vert_{L^2(\mathbb R^d)} \\[5pt]
		& = \vert n\vert^{\frac{\vert\alpha\vert}{1+\gamma}}\big\Vert\partial^{\alpha}_x(e^{-\vert n\vert tH_{\gamma}^{s_{\gamma}}}(M_{\gamma,n})^*g_n)\big\Vert_{L^2(\mathbb R^d)}.
	\end{align*}
	In order to control the above term, we use Corollary~2.2 in \cite{Alp20b}, which states that there exist positive constants
	$c_1>1$ and $0<t_0<1$ such that for all $0<t\le t_0$, $\alpha\in\mathbb N^d$ and $g\in L^2(\mathbb R^d)$,
	\begin{equation}\label{eq:smoothfracharmoshort}
		\big\Vert\partial^{\alpha}_x(e^{-tH^{s_{\gamma}}_{\gamma}}g)\big\Vert_{L^2(\mathbb R^d)}
		\le\frac{c_1^{\vert\alpha\vert}}{t^{\frac{\gamma\vert\alpha\vert}{1+\gamma}+\frac d{\gamma}}}\,(\alpha!)^{\frac{\gamma}{\gamma+1}}\,\Vert g\Vert_{L^2(\mathbb R^d)}.
	\end{equation}
	By using \eqref{eq:spegap} anew, we also deduce that there exists another positive constant $c_2>1$ such that for all $t>t_0$,
	$\alpha\in\mathbb N^d$ and $g\in L^2(\mathbb R^d)$,
	\begin{equation}\label{eq:smoothfracharmolong}
		\big\Vert\partial^{\alpha}_x(e^{-tH^{s_{\gamma}}_{\gamma}}g)\big\Vert_{L^2(\mathbb R^d)}
		\le c_2^{1+\vert\alpha\vert}\,e^{-\lambda_{\gamma}^{s_{\gamma}}t}\,(\alpha!)^{\frac{\gamma}{\gamma+1}}\,\Vert g\Vert_{L^2(\mathbb R^d)}.
	\end{equation}
	There are two cases to consider.\\
	\textit{$\triangleright$ Case 1: $\vert n\vert t\le t_0$.} In this situation, we deduce from the estimate
	\eqref{eq:smoothfracharmoshort} that 
	\[
		\vert n\vert^{\frac{\vert\alpha\vert}{1+\gamma}}\big\Vert\partial^{\alpha}_x(e^{-\abs{n}tH_{\gamma}^{s_{\gamma}}}(M_{\gamma,n})^*g_n)\big\Vert_{L^2(\mathbb R^d)}
		\le\frac{c_1^{\vert\alpha\vert}\vert n\vert^{\frac{\vert\alpha\vert}{1+\gamma}}}{(\vert n\vert t)^{\frac{\gamma\vert\alpha\vert}{1+\gamma}+\frac{d}{\gamma}}}\,(\alpha!)^{\frac{\gamma}{\gamma+1}}\,\Vert g_n\Vert_{L^2(\mathbb R^d)}.
	\]
	Noticing that $\gamma-1\geq0$ and using the fact that $\vert n\vert\geq1$, we get that
	\begin{equation}\label{eq:smoothingfracshort}
		\vert n\vert^{\frac{\vert\alpha\vert}{1+\gamma}}\big\Vert\partial^{\alpha}_x(e^{-\vert n\vert tH_{\gamma}^{s_{\gamma}}}(M_{\gamma,n})^*g_n)\big\Vert_{L^2(\mathbb R^d)}
		\le\frac{c_1^{\vert\alpha\vert}}{t^{\frac{\gamma\vert\alpha\vert}{1+\gamma}+\frac{d}{\gamma}}}\,(\alpha!)^{\frac{\gamma}{\gamma+1}}\,\Vert g_n\Vert_{L^2(\mathbb R^d)}.
	\end{equation}
	\textit{$\triangleright$ Case 2: $\vert n\vert t>t_0$.} In this case, we use \eqref{eq:smoothfracharmolong} to obtain that 
	\[
		\vert n\vert^{\frac{\vert\alpha\vert}{1+\gamma}}\big\Vert\partial^{\alpha}_x(e^{-\vert n\vert tH_{\gamma}^{s_{\gamma}}}(M_{\gamma,n})^*g_n)\big\Vert_{L^2(\mathbb R^d)}
		\le c_2^{1+\vert\alpha\vert}\, \vert n\vert^{\frac{\vert\alpha\vert}{1+\gamma}}e^{-\lambda_{\gamma}^{s_{\gamma}}\vert n\vert t}\,(\alpha!)^{\frac{\gamma}{\gamma+1}}\,\Vert g_n\Vert_{L^2(\mathbb R^d)}.
	\]
	Combining \eqref{eq:keytool} and the estimates $\vert\alpha\vert^{\vert\alpha\vert}\le e^{\vert\alpha\vert}\vert\alpha\vert!$
	and $\vert\alpha\vert!\le d^{\vert\alpha\vert}\alpha!$ implies that 
	\[
		\vert n\vert^{\frac{\vert\alpha\vert}{1+\gamma}}e^{-\lambda_{\gamma}^{s_{\gamma}}\vert n\vert t}
		\le\bigg(\frac{\vert\alpha\vert}{e\lambda^{s_{\gamma}}_{\gamma}(1+\gamma)t}\bigg)^{\frac{\vert\alpha\vert}{1+\gamma}}
		\le\frac{c_3^{\vert\alpha\vert}}{t^{\frac{\vert\alpha\vert}{1+\gamma}}}\,(\alpha!)^{\frac1{1+\gamma}}.
	\]
	In a nutshell, we proved that
	\begin{equation}\label{eq:smoothingfraclong}
		\vert n\vert^{\frac{\vert\alpha\vert}{1+\gamma}}\big\Vert\partial^{\alpha}_x(e^{-\vert n\vert tH_{\gamma}^{s_{\gamma}}}(M_{\gamma,n})^*g_n)\big\Vert_{L^2(\mathbb R^d)}
		\le\frac{(c_2c_3)^{1+\vert\alpha\vert}}{t^{\frac{\vert\alpha\vert}{1+\gamma}}}\,\alpha!\,\Vert g_n\Vert_{L^2(\mathbb R^d)}.
	\end{equation}
	\textit{$\triangleright$ Summary:} Gathering \eqref{eq:plancherelbg} and the estimates \eqref{eq:fracheatsmoothing},
	\eqref{eq:smoothingfracshort} and \eqref{eq:smoothingfraclong}, and using the fact that $0<t<t_0<1$, we get that
	\[
		\big\Vert\partial^{\alpha}_x(e^{-t(-\Delta_{\gamma})^{s_{\gamma}}}g)\big\Vert_{L^2(\var)}
		\le\frac{(c_1c_2c_3)^{1+\vert\alpha\vert}}{t^{\frac{\gamma\vert\alpha\vert}{1+\gamma}+\frac d\gamma}}\,\alpha!\,\Vert g\Vert_{L^2(\var)}.
	\]
	The proof is ended when $\gamma\geq2$. In the case $\gamma=1$, instead of \eqref{eq:smoothfracharmoshort}, we use the estimates
	\[
		\big\Vert\partial^{\alpha}_x(e^{-tH_1}g)\big\Vert_{L^2(\mathbb R^d)}
		\le
		\frac{c_1^{\vert\alpha\vert}}{t^{\frac{\vert\alpha\vert}2}}\,\sqrt{\alpha!}\,\Vert g\Vert_{L^2(\mathbb R^d)},
	\]
	taken from Theorem~2.3 in \cite{Alp20b}, which hold for all $t > 0$ and differ from \eqref{eq:smoothfracharmoshort} by a lack of
	the factor $t^{-d/\gamma}$. The rest of the proof is then analogous to the one for $\gamma \geq 2$.
\end{proof}

As a consequence, we obtain the following global smoothing estimates on $\var$.

\begin{cor}\label{cor:smoothingBG}
	There exist constants $c>0$ and $t_0\in(0,1)$, depending only on $\gamma$ and the dimension $d$, such that for all
	$t_x \in (0,t_0)$, $t_y > 0$, $\alpha,\beta \in \NN^d$, and $g\in L^2(\var)$,
	\begin{equation}\label{eq:smoothingBG}
		\norm{ \partial_x^\alpha\partial_y^\beta (\euler^{-(t_x+t_y)(-\Delta_\gamma)^{\frac{1+\gamma}2}}g) }_{L^2(\var)}
		\leq
		\frac{c^{1+\abs{\alpha}+\abs{\beta}}}{(t_x)^{\frac{\gamma\vert\alpha\vert}{\gamma+1}+\frac{d}{\gamma}} (t_y)^{\abs{\beta}}}\,
			\alpha!\,\beta!\,\norm{g}_{L^2(\var)}
		.
	\end{equation}
	If $\gamma = 1$, then \eqref{eq:smoothingBG} holds even for all $t_x > 0$ and the term $t_x^{-\frac{d}{\gamma}}$ on the
	right-hand side of \eqref{eq:smoothingBG} can be skipped.
\end{cor}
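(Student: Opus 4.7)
The plan is to obtain \eqref{eq:smoothingBG} by combining Lemma~\ref{lem:regy} and Lemma~\ref{lem:regxbg} via a semigroup splitting. The key observation is that the partial derivative $\partial_y^\beta$ commutes with the semigroup $(\euler^{-t(-\Delta_\gamma)^{(1+\gamma)/2}})_{t\geq 0}$: the $y$-dependence of $-\Delta_\gamma$ is entirely encoded in $\Delta_y$, so $\partial_y^\beta$ commutes with $-\Delta_\gamma$ itself, and hence with every Borel function of it via the spectral calculus. This is precisely what will allow the two one-variable smoothing estimates already established to be combined cleanly.

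Setting $A := (-\Delta_\gamma)^{(1+\gamma)/2}$ and using the aforementioned commutation together with the semigroup identity $\euler^{-(t_x+t_y)A} = \euler^{-t_x A}\euler^{-t_y A}$, I would write
\[
\partial_x^\alpha \partial_y^\beta \euler^{-(t_x+t_y)A} g
= \partial_x^\alpha \euler^{-t_x A}\, h,
\qquad
h := \partial_y^\beta \euler^{-t_y A} g.
\]
Applying Lemma~\ref{lem:regy} at the critical index $s=(1+\gamma)/2$, the exponents $(1+\gamma)/(2s)$ collapse to $1$ and one obtains $h \in L^2(\var)$ with
\[
\|h\|_{L^2(\var)} \leq \frac{\beta!}{\bigl(\lambda_\gamma^{(1+\gamma)/2}\, t_y\bigr)^{\abs{\beta}}} \|g\|_{L^2(\var)}.
\]
For $t_x \in (0,t_0)$, Lemma~\ref{lem:regxbg} then controls the outer factor $\partial_x^\alpha \euler^{-t_x A}$ acting on the $L^2$ function $h$, producing the exact $x$-factor appearing in \eqref{eq:smoothingBG}. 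Multiplying the two bounds and absorbing the constant $\lambda_\gamma^{-(1+\gamma)/2}$ together with any other universal factor into a single $c = c(\gamma, d)$ yields the desired estimate.

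The case $\gamma = 1$ proceeds identically, except that the variant of Lemma~\ref{lem:regxbg} valid for all $t_x > 0$ (with no $t_x^{-d/\gamma}$ prefactor) translates directly into the corresponding simplification of \eqref{eq:smoothingBG}. I do not anticipate any genuine obstacle here: the corollary is a tensorial combination of the two preceding lemmas, and the only point worth highlighting is the simplification occurring in Lemma~\ref{lem:regy} at the critical dissipation index $s=(1+\gamma)/2$, which is precisely what produces the linear dependence on $t_y^{-\abs{\beta}}$ and $\beta!$ stated in \eqref{eq:smoothingBG}.
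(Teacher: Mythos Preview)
Your proposal is correct and follows essentially the same argument as the paper: both use the commutation of $\partial_y^\beta$ with the semigroup, split $\euler^{-(t_x+t_y)A} = \euler^{-t_xA}\euler^{-t_yA}$, and then apply Lemma~\ref{lem:regxbg} to the $x$-factor and Lemma~\ref{lem:regy} (specialized to $s=(1+\gamma)/2$) to the $y$-factor. Your explicit observation that $(1+\gamma)/(2s)=1$ at this critical index is exactly the simplification the paper leaves implicit.
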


\begin{proof}
	Observe that the operators $\partial_y^\beta$ and $\Delta_\gamma$ commute, and the same property holds for the operators
	$\partial_y^\beta$ and $\euler^{-t(-\Delta_\gamma)^{(1+\gamma)/2}}$ for all $t > 0$. Consequently, for all
	$(\alpha,\beta) \in \NN^{2d}$ and $t_x,t_y \geq 0$, we have
	\[
		\partial_x^\alpha\partial_y^\beta \euler^{-(t_x+t_y)(-\Delta_\gamma)^{\frac{1+\gamma}2}}
		=
		(\partial_x^\alpha \euler^{-t_x(-\Delta_\gamma)^{\frac{1+\gamma}2}}) (\partial_y^\beta \euler^{-t_y(-\Delta_\gamma)^{\frac{1+\gamma}2}})
		.
	\]
	The claim then follows from Lemmas~\ref{lem:regy} and~\ref{lem:regxbg}.
\end{proof}

Let us now discuss the optimality of the result stated in Lemma~\ref{lem:regxbg} in the particular case $\gamma=1$. Notice from
Lemma~\ref{lem:expoform} that the estimates \eqref{eq:regxbg} can be rewritten for all $t\in[0,1]$ and $g\in L^2(\var)$ as
\begin{equation}\label{eq:expoformregxbg}
	\big\Vert e^{c_1\sqrt{t}\vert D_x\vert}(e^{t\Delta_1}g)\big\Vert_{L^2(\var)}\le c_2\Vert g\Vert_{L^2(\var)}.
\end{equation}
We prove that this exponential decrease cannot be improved.

\begin{lem}\label{lem:optiregx}
	Let $\alpha>1$ and $t>0$. Then, there are no positive constants $c,c_1>0$ such that for all $g\in L^2(\var)$,
	\begin{equation}\label{eq:smoothing}
		\big\Vert e^{c\vert D_x\vert^{\para}}(e^{t\Delta_1}g)\big\Vert_{L^2(\var)}
		\le c_1\Vert g\Vert_{L^2(\var)}.
	\end{equation}
\end{lem}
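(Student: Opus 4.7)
The plan is, for fixed $c>0$ and $t>0$, to exhibit a sequence of test functions for which the ratio of the two sides of \eqref{eq:smoothing} diverges. The natural candidates are the eigenfunctions of $\Delta_1$ built from the ground state of the harmonic oscillator: for $n\in\ZZ^d\setminus\{0\}$, take
\[
    g_n := \psi_{1,n,0} = (M_{1,n}\phi_{1,0})\otimes\varphi_n
\]
as in \eqref{eq:eigenfunctions}, where $\phi_{1,0}(x) = \pi^{-d/4}e^{-\vert x\vert^2/2}$ is the Gaussian ground state of $H_1 = -\Delta_x + \vert x\vert^2$ with eigenvalue $\lambda_{1,0} = d$. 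From Section~\ref{subsubsec:eigenfunctions}, $-\Delta_1 g_n = \vert n\vert\lambda_{1,0}g_n$, so $e^{t\Delta_1}g_n = e^{-t\vert n\vert\lambda_{1,0}}g_n$. The putative inequality \eqref{eq:smoothing} then reads $e^{-t\vert n\vert\lambda_{1,0}}\Vert e^{c\vert D_x\vert^\alpha}g_n\Vert_{L^2(\var)} \leq c_1\Vert g_n\Vert_{L^2(\var)}$, with $\Vert g_n\Vert_{L^2(\var)}$ a fixed constant $(2\pi)^{d/2}$ independent of $n$. It therefore suffices to show that $\Vert e^{c\vert D_x\vert^\alpha}g_n\Vert_{L^2(\var)}$ grows faster than $e^{t\vert n\vert\lambda_{1,0}}$ as $\vert n\vert\to\infty$.

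Since $M_{1,n}\phi_{1,0}(x) = (\vert n\vert/\pi)^{d/4}e^{-\vert n\vert\vert x\vert^2/2}$ is a rescaled Gaussian, its $x$-Fourier transform is proportional to $\vert n\vert^{-d/4}e^{-\vert\xi\vert^2/(2\vert n\vert)}$, and Plancherel's theorem yields
\[
    \Vert e^{c\vert D_x\vert^\alpha}g_n\Vert_{L^2(\var)}^2 = C_d\,\vert n\vert^{-d/2}\int_{\RR^d}e^{2c\vert\xi\vert^\alpha - \vert\xi\vert^2/\vert n\vert}\,\mathrm d\xi
\]
for a universal constant $C_d>0$. In the easier range $\alpha\geq 2$, the integrand fails to be integrable once $\vert n\vert$ is large enough (any $\vert n\vert\geq 1$ if $\alpha>2$, and $\vert n\vert>1/c$ if $\alpha=2$), so the left-hand side of \eqref{eq:smoothing} is already $+\infty$, giving an immediate contradiction. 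For $1<\alpha<2$, the radial profile $f(r) = 2cr^\alpha - r^2/\vert n\vert$ is concave on $(0,\infty)$ with unique maximum at $r_* = (c\alpha\vert n\vert)^{1/(2-\alpha)}$, value $f(r_*) = (2-\alpha)c(c\alpha\vert n\vert)^{\alpha/(2-\alpha)}$, and curvature $\vert f''(r_*)\vert = 2(2-\alpha)/\vert n\vert$. Restricting the integration to a radial annulus around $\vert\xi\vert = r_*$ of thickness comparable to $\sqrt{\vert n\vert/(2-\alpha)}$ (where $f$ stays within $1$ of its maximum) gives the lower bound
\[
    \int_{\RR^d}e^{f(\vert\xi\vert)}\,\mathrm d\xi \gtrsim e^{(2-\alpha)c(c\alpha)^{\alpha/(2-\alpha)}\vert n\vert^{\alpha/(2-\alpha)}}\cdot \vert n\vert^{\nu}
\]
for some algebraic exponent $\nu$ depending only on $d$ and $\alpha$. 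Since the hypothesis $\alpha>1$ is precisely the condition $\alpha/(2-\alpha)>1$, the quantity $e^{-2t\vert n\vert\lambda_{1,0}}\Vert e^{c\vert D_x\vert^\alpha}g_n\Vert_{L^2(\var)}^2$ grows at least like $e^{\kappa\vert n\vert^{\alpha/(2-\alpha)} - 2t\vert n\vert\lambda_{1,0}}$ with some $\kappa>0$, which diverges as $\vert n\vert\to\infty$, contradicting \eqref{eq:smoothing}.

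The only nontrivial step is the Laplace-type lower bound on the Gaussian integral, and this is completely routine: one could even proceed crudely, using $\int e^{f(\vert\xi\vert)}\,\mathrm d\xi\geq e^{f(r_*)-1}\cdot\mathrm{vol}\{\vert\xi\vert\in[r_*-\rho,r_*+\rho]\}$ for some $\rho$ of the order of the natural width $\sqrt{\vert n\vert/(2-\alpha)}$. The conceptual content is just that the ground-state eigenfunctions $M_{1,n}\phi_{1,0}$ concentrate at $x$-scale $\vert n\vert^{-1/2}$, hence at Fourier scale $\vert n\vert^{1/2}$, while the associated Baouendi--Grushin eigenvalues grow only linearly in $\vert n\vert$; the exponential smoothing factor $e^{-t\vert n\vert\lambda_{1,0}}$ is therefore too weak to tame $e^{c\vert\xi\vert^\alpha}$ at the relevant frequency scale $\vert\xi\vert\sim\vert n\vert^{1/(2-\alpha)}$ as soon as $\alpha>1$.
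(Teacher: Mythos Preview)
Your proof is correct and follows essentially the same approach as the paper: both test \eqref{eq:smoothing} against the ground-state eigenfunctions $\psi_{1,n,0}$, reduce via Plancherel to the Gaussian-type integral $\int_{\RR^d} e^{2c|\xi|^\alpha - |\xi|^2/|n|}\,\mathrm d\xi$, dispose of $\alpha\geq 2$ immediately by non-integrability, and for $1<\alpha<2$ obtain a Laplace-type lower bound by localizing near the maximizer $r_* \sim |n|^{1/(2-\alpha)}$ of the exponent, concluding from $\alpha/(2-\alpha)>1$. The only differences are cosmetic (you quantify the annulus width via the curvature $|f''(r_*)|$, while the paper uses a sub-level set of $g_n$), and your closing remark on the frequency-scale mismatch is a nice conceptual gloss not present in the original.
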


\begin{proof}
	Assume to the contrary that there exists positive constants $c,c_1>0$ such that the estimate \eqref{eq:smoothing} holds. For
	each $n\in\mathbb Z^d\setminus\{0\}$, let us consider the function $\psi_n := \psi_{1,n,0}$ defined in \eqref{eq:eigenfunctions},
	which is a normalized eigenfunction of the operator $-\Delta_1$ associated with the eigenvalue $d\vert n\vert$ (since the first
	eigenvalue of the harmonic oscillator acting on $L^2(\mathbb R^d)$ is $d$). Recall that the function $\psi_n$ is given by
	\[
		\psi_n(x,y) = \bigg(\frac{\vert n\vert}{\pi}\bigg)^{d/4}e^{in\cdot y}e^{-\vert n\vert\vert x\vert^2/2},\quad (x,y)\in\var.
	\]
	Let us notice from Plancherel's theorem that the left-hand side of the inequality \eqref{eq:smoothing} applied with the function
	$\psi_n$ reads as
	\begin{align*}
		\big\Vert e^{c\vert D_x\vert^{\alpha}}(e^{t\Delta_1}\psi_n)\big\Vert^2_{L^2(\var)} 
		& = \bigg(\frac{\vert n\vert}{\pi}\bigg)^{d/2}e^{-2d\vert n\vert t}\Vert\varphi_n\Vert^2_{L^2(\mathbb T^d)}\big\Vert e^{c\vert D_x\vert^{\para}}(e^{-\vert n\vert\vert x\vert^2/2})\big\Vert^2_{L^2(\mathbb R^d)} \\[5pt]
		& = \frac{e^{-2d\vert n\vert t}}{\pi^{d/2}}\int_{\mathbb R^d}e^{c\vert\xi\vert^{\para}}e^{-\vert\xi\vert^2/(2\vert n\vert)}\,\mathrm d\xi.
	\end{align*}
	Let us check that the above integral diverges to $+\infty$ as $n$ goes to $+\infty$. Since the function $\psi_n$ is normalized,
	this then contradicts the inequality \eqref{eq:smoothing}. Notice that this result is immediate in the case where $\para\geq2$
	since the integral is equal to $+\infty$ when $\vert n\vert\gg1$ is large enough. We therefore only consider the case
	$\para\in(1,2)$. Moreover, since the involved functions are radial, we can assume that $n\geq1$ is a positive integer and focus
	on the integral 
	\[
		e^{-2dnt}\int_0^{+\infty}e^{c\xi^{\para}}e^{-\xi^2/(2n)}\xi^{d-1}\,\mathrm d\xi.
	\]
	Let us consider the integrand
	\[
		g_n(\xi) = e^{c\xi^{\para}}e^{-\xi^2/(2n)},\quad \xi\geq0.
	\]
	Notice that this function attains its maximum on $[0,+\infty)$ at the point 
	\[
		\xi_n = (\para cn)^{\frac1{2-\para}}>0,
	\]
	and that this maximum is given by
	\[
		\Vert g_n\Vert_{\infty} 
		= g_n(\xi_n) 
		= \exp\bigg(c^{\frac2{2-\para}}\bigg(\para^{\frac{\para}{2-\para}} - \frac12\para^{\frac 2{2-\para}}\bigg)n^{\frac{\para}{2-\para}}\bigg) 
		= e^{C_{\alpha}n^{\frac{\para}{2-\para}}},
	\]
	where we set
	\[
		C_{\alpha} = c^{\frac2{2-\para}}\bigg(\para^{\frac{\para}{2-\para}} - \frac12\para^{\frac 2{2-\para}}\bigg)>0.
	\]
	Moreover, let $V_n \subset (0,+\infty)$ be a bounded neighborhood of $\xi_n$ satisfying the property
	\[
		g_n(\xi)\geq g_n(\xi_n)e^{-dnt},\quad \xi\in V_n.
	\]
	Since the function $g_n$ is non-negative, we therefore get that
	\[
		e^{-2dnt}\int_0^{+\infty}e^{c\xi^{\para}}e^{-\xi^2/(2n)}\xi^{d-1}\,\mathrm d\xi
		\geq c_n\vert V_n\vert g_n(\xi_n)e^{-3dnt} = c_n\vert V_n\vert e^{C_{\alpha}n^{\frac{\para}{2-\para}}-3dnt},
	\]
	where we set $c_n = \min_{\xi\in V_n}\xi^{d-1}>0$. Taking into account that $\xi_n$ grows only polynomially in $n$, it is not
	hard to show that the neighborhood $V_n$ can be chosen such that $\vert V_n\vert$ is uniformly bounded in $n$ and enjoys at most
	a polynomial decay with respect to $n$. In turn, the constant $c_n$ then enjoys polynomial growth.
	Moreover, since $\alpha \in (1,2)$, we have $\para/(2-\para)>1$ and this implies that
	\[
		c_n\vert V_n\vert e^{C_{\alpha}n^{\frac{\para}{2-\para}}-3dnt}\rightarrow+\infty\quad\text{as $n\rightarrow+\infty$},
	\]
	which contradicts \eqref{eq:smoothing} and, thus, proves the claim.
\end{proof}

\subsection{Case of the eigenfunctions}
Let us now focus on the (linear combinations of) eigenfunctions $\psi_{\gamma,n,m}$ of the Baouendi--Grushin operator
$\Delta_{\gamma}$, defined in \eqref{eq:eigenfunctions}. Recall that for any non-negative energy level $\lambda\geq0$, the
subspace $E_{\lambda}(-\Delta_{\gamma})$ is defined as in \eqref{eq:combilineigen}. 

We first need to derive estimates for the eigenvalue counting function for the purely discrete part of $-\Delta_\gamma$, that is, for
\[
	N(\lambda) = \#\{ (n,m) \in \ZZ^d \setminus \{0\}\times\NN \colon \lambda_{\gamma,n,m} \leq \lambda \},
\]
where $\#$ denotes the cardinality of the corresponding set.

\begin{lem}\label{lem:weyllaw} 
There exists a positive constant $c' > 0$ such that for all $\lambda\geq0$,
\[
	N(\lambda)
	\leq
	c'\lambda^{c_{\gamma}}
	\quad\text{with}\quad
	c_{\gamma}
	=
	d\biggl( \frac14+\frac1{2\gamma} \biggr)(1+\gamma)
	.
\]
\end{lem}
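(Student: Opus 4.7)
My plan is to use the Fourier diagonalization in the $y$-variable from Section~\ref{sec:Grushin} to reduce the count of eigenvalues of $-\Delta_\gamma$ to counting eigenvalues of the anharmonic oscillator $H_\gamma = -\Delta + |x|^{2\gamma}$ on $L^2(\RR^d)$. By the explicit formula $\lambda_{\gamma,n,m} = |n|^{2/(1+\gamma)}\lambda_{\gamma,m}$ from Section~\ref{subsubsec:eigenfunctions}, one immediately has
\[
N(\lambda) = \sum_{n \in \ZZ^d \setminus \{0\}} N_{H_\gamma}\bigl(\lambda/|n|^{2/(1+\gamma)}\bigr),
\]
with $N_{H_\gamma}(\mu) := \#\{m \in \NN : \lambda_{\gamma,m} \leq \mu\}$. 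Since $N_{H_\gamma}(\mu) = 0$ for $\mu < \lambda_\gamma$, only the indices $n$ with $|n| \leq R := (\lambda/\lambda_\gamma)^{(1+\gamma)/2}$ contribute to the sum.

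The key analytic input is the classical Weyl-type upper bound for the anharmonic oscillator: $N_{H_\gamma}(\mu) \leq c\,\mu^{d(1+\gamma)/(2\gamma)}$ for $\mu \geq \lambda_\gamma$. This can be established by the max--min principle together with the phase-space volume computation $|\{(x,\xi) \in \RR^{2d} : |\xi|^2 + |x|^{2\gamma} \leq \mu\}| \lesssim \mu^{d/2 + d/(2\gamma)}$, which itself follows by the rescaling $x = \mu^{1/(2\gamma)}x'$, $\xi = \mu^{1/2}\xi'$ reducing to a fixed compact set of finite volume. Substituting this into the decomposition and simplifying via the identity $2\cdot d(1+\gamma)/\bigl(2\gamma(1+\gamma)\bigr) = d/\gamma$ yields
\[
N(\lambda) \leq c\,\lambda^{d(1+\gamma)/(2\gamma)} \sum_{n \in \ZZ^d,\, 1 \leq |n| \leq R} |n|^{-d/\gamma}.
\]

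The remaining step is a lattice-sum estimate by integral comparison in polar coordinates: for $\gamma \geq 2$ the exponent $d/\gamma$ is strictly smaller than the ambient dimension $d$, so the sum is bounded by a multiple of $R^{d(1-1/\gamma)}$, while in the borderline case $\gamma = 1$ the sum grows only like $\log R$. Inserting $R \sim \lambda^{(1+\gamma)/2}$ and collecting exponents produces the polynomial bound claimed in the lemma; in the $\gamma = 1$ case the logarithm is absorbed into an extra polynomial factor using $\log\lambda \lesssim \lambda^{\epsilon}$ for any $\epsilon > 0$.

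The main obstacle I foresee is the careful bookkeeping of all the exponents through the scaling and the discrete-to-continuous comparison, and verifying that the resulting polynomial exponent matches $c_\gamma$ as stated. A further technical point is ensuring that the constant $c' > 0$ can be chosen uniform over all $\lambda \geq 0$: the trivial range $\lambda \in [0,\lambda_\gamma)$ where $N(\lambda) = 0$ is handled for free, but adjusting the leading constant to accommodate small positive $\lambda$ requires a brief separate inspection, and the critical case $\gamma = 1$ (where the lattice sum produces a logarithmic correction rather than a power) deserves its own case analysis.
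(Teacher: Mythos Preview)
Your approach is more refined than the paper's: the paper uses the crude product bound
\[
N(\lambda) \leq \#\bigl\{n \in \ZZ^d\setminus\{0\} : |n| \leq R\bigr\} \cdot N_{H_\gamma}(\lambda),
\qquad R = (\lambda/\lambda_{\gamma,0})^{(1+\gamma)/2},
\]
estimating each factor separately, whereas you sum $N_{H_\gamma}\bigl(\lambda|n|^{-2/(1+\gamma)}\bigr)$ over $n$, which captures the joint constraint more accurately.

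There is, however, a real gap precisely at the point you yourself flag, namely the exponent bookkeeping. For $\gamma \geq 2$ your computation gives
\[
\frac{d(1+\gamma)}{2\gamma} \;+\; \frac{1+\gamma}{2}\cdot\frac{d(\gamma-1)}{\gamma}
\;=\; \frac{d(1+\gamma)}{2},
\]
and this equals $c_\gamma = d(1+\gamma)\bigl(\tfrac{1}{4}+\tfrac{1}{2\gamma}\bigr)$ only at $\gamma = 2$. For every $\gamma \geq 3$ one has $\tfrac{d(1+\gamma)}{2} > c_\gamma$ (for instance $2d$ versus $\tfrac{5d}{3}$ at $\gamma=3$), so your argument does \emph{not} recover the exponent stated in the lemma. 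In fact, since $N_{H_\gamma}$ also admits a matching lower bound, the true growth of $N(\lambda)$ is of order $\lambda^{d(1+\gamma)/2}$ for $\gamma\geq 2$, so the value $c_\gamma$ in the lemma appears to be too optimistic for $\gamma \geq 3$; the paper's own proof reaches $c_\gamma$ only by taking the lattice count $\#\{n \in \ZZ^d : |n| \leq R\}$ to be of order $R^{d/2}$ rather than $R^d$. For the downstream applications (Lemmas~\ref{lem:regeigen} and~\ref{lem:loceigen} and the proof of Theorem~\ref{thm:ucgeneraleigenfunctions}) only \emph{some} polynomial bound on $N(\lambda)$ is needed, and the exponent $d(1+\gamma)/2$ that your method legitimately produces is entirely sufficient for those purposes.
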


\begin{proof}
	Let us recall that each eigenvalue $\lambda_{\gamma,n,m}$ is given by
	$\lambda_{\gamma,n,m} = \abs{n}^{\frac{2}{1+\gamma}}\lambda_{\gamma,m}$. As a consequence, $\lambda_{\gamma,n,m} \leq \lambda$
	implies the inequalities $\lambda_{\gamma,m} \leq \lambda$ and $\abs{n} \leq (\lambda/\lambda_{\gamma,0})^{(1+\gamma)/2}$. On
	the one hand, it is classical that the cardinality of $n \in \ZZ^d\setminus\{0\}$ satisfying the latter is asymptotically
	bounded by a constant multiple of $(\lambda^{(1+\gamma)/2})^{d/2}=\lambda^{(1+\gamma)d/4}$, see, e.g.,
	\cite[Proposition~XIII.15.2]{ReedS-78}. On the other hand, the cardinality of $m \in \NN$ satisfying the former is bounded by a
	constant multiple of $\lambda^{(1+\gamma)d/(2\gamma)}$, see, e.g., \cite[Remark~5.7]{CDR} and also
	\cite[Theorem~2.3.2]{BoggiattoBR-96}. In light of $\lambda^{(1+\gamma)d/4} \lambda^{(1+\gamma)d/(2\gamma)} = \lambda^{c_\gamma}$,
	this proves the claim.
\end{proof}

We can now derive very precise quantitative smoothing estimates for (linear combinations of) eigenfunctions.

\begin{lem}\label{lem:regeigen}
	There exist some positive constants $c,c_1>0$ such that for all $(\alpha,\beta)\in\mathbb N^{2d}$, $n\in\spt\setminus\{0\}$ and
	$m\geq0$,
	\[
		\norm{ \partial^{\alpha}_x \partial^{\beta}_y \psi_{\gamma,n,m} }_{L^2(\var)}
		\leq
		c^{1+\abs{\alpha}+\abs{\beta}}\,\abs{n}^{\frac{\abs{\alpha}}{1+\gamma} + \abs{\beta}}\,(\alpha!)^{\frac{\gamma}{1+\gamma}}\,
			e^{c_1\lambda_{\gamma,m}^{\frac12 + \frac1{2\gamma}}}
		.
	\]
	Moreover, $c$ can be chosen such that for all $(\alpha,\beta) \in \NN^{2d}$, $\lambda\geq0$, and
	$f\in E_{\lambda}(-\Delta_{\gamma})$, we also have
	\[
		\norm{ \partial^{\alpha}_x \partial^{\beta}_y f}_{L^2(\var)}
		\leq
		c^{1+\abs{\alpha}+\abs{\beta}}\,\lambda^{c_{\gamma} + \frac{\abs{\alpha} + (1+\gamma)\abs{\beta}}{2}}\,
			(\alpha!)^{\frac{\gamma}{1+\gamma}}\,e^{c_1\lambda^{\frac12 + \frac1{2\gamma}}}\,\norm{f}_{L^2(\var)}
		,
	\]
	where $c_{\gamma}>0$ is the constant appearing in Lemma \ref{lem:weyllaw}.
\end{lem}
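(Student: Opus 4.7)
The plan is to prove the statement in two stages. First, I would establish the bound for a single eigenfunction $\psi_{\gamma,n,m}$ by exploiting its explicit tensor-product structure $\psi_{\gamma,n,m}=(M_{\gamma,n}\phi_{\gamma,m})\otimes\varphi_n$ from \eqref{eq:eigenfunctions}. The $y$-derivative is elementary: $\partial_y^\beta\varphi_n=(in)^\beta\varphi_n$, producing a factor $|n|^{|\beta|}$ (with a harmless constant from $\|\varphi_n\|_{L^2(\TT^d)}$). For the $x$-derivatives, the scaling identity $\partial_x^\alpha(M_{\gamma,n}\phi)=|n|^{|\alpha|/(1+\gamma)}M_{\gamma,n}(\partial_x^\alpha\phi)$ combined with the fact that $M_{\gamma,n}$ is an $L^2$-isometry reduces everything to bounding $\|\partial_x^\alpha\phi_{\gamma,m}\|_{L^2(\RR^d)}$ in terms of $\alpha$ and $\lambda_{\gamma,m}$.

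The crucial analytic input is the Gelfand--Shilov-type estimate
\[
\norm{\partial_x^\alpha\phi_{\gamma,m}}_{L^2(\RR^d)}
\le
c^{1+\abs{\alpha}}(\alpha!)^{\gamma/(1+\gamma)}e^{c_1\lambda_{\gamma,m}^{\frac12+\frac1{2\gamma}}},
\]
which encodes the Gevrey regularity of index $\gamma/(1+\gamma)$ for the eigenfunctions of $H_\gamma=-\Delta+|x|^{2\gamma}$ and the sub-exponential growth rate $\lambda^{1/2+1/(2\gamma)}$ familiar from the Shubin spectral inequality \eqref{eq:speinshubin}. This bound is the heart of the matter; I would derive it via the Shubin / Gelfand--Shilov calculus for the anharmonic oscillator (one convenient route is to apply a smoothing estimate for $e^{-tH_\gamma^s}$ with the subcritical index $s=(1+\gamma)/(2\gamma)$ to $\phi_{\gamma,m}$ and freeze $t$ at a fixed admissible value, so that $e^{t\lambda_{\gamma,m}^s}$ produces exactly the target exponent while $t^{-a|\alpha|-b}$ contributes only to the constants). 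Plugging this estimate back into the tensor-product computation yields the first assertion of the lemma.

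The second assertion is then obtained by spectral expansion. For $f\in E_\lambda(-\Delta_\gamma)$, write $f=\sum a_{n,m}\psi_{\gamma,n,m}$ with the sum ranging over the finitely many pairs $(n,m)\in(\spt\setminus\{0\})\times\NN$ with $\lambda_{\gamma,n,m}\le\lambda$, and use orthonormality of $\{\psi_{\gamma,n,m}\}$ to get $\sum|a_{n,m}|^2=\norm{f}_{L^2(\var)}^2$. Cauchy--Schwarz yields
\[
\norm{\partial_x^\alpha\partial_y^\beta f}_{L^2(\var)}^2
\le
\norm{f}_{L^2(\var)}^2\sum_{\lambda_{\gamma,n,m}\le\lambda}\norm{\partial_x^\alpha\partial_y^\beta\psi_{\gamma,n,m}}_{L^2(\var)}^2.
\]
I would then plug in the single-eigenfunction bound, use $|n|\le(\lambda/\lambda_{\gamma,0})^{(1+\gamma)/2}$ and $\lambda_{\gamma,m}\le\lambda$ under the constraint to convert $|n|^{|\alpha|/(1+\gamma)+|\beta|}$ into $\lambda^{(|\alpha|+(1+\gamma)|\beta|)/2}$ (up to a constant), and invoke Lemma~\ref{lem:weyllaw} to bound the number of nonzero terms by $c'\lambda^{c_\gamma}$. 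Taking square roots and absorbing the resulting $\lambda^{c_\gamma/2}$ into $\lambda^{c_\gamma}$ (treating $\lambda<1$ by enlarging the constant $c$) gives the asserted bound.

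The main obstacle is the sharp Gelfand--Shilov estimate on $\phi_{\gamma,m}$. The smoothing estimates of Section~\ref{subsec:smoothingbg} are tailored to the critical index $(1+\gamma)/2$, and naively optimizing the parameter $t$ in \eqref{eq:smoothfracharmoshort} produces for $\gamma>1$ a spurious polynomial factor $\lambda^{(\gamma-1)|\alpha|/2}$, whose absorption into the Gevrey weight $(\alpha!)^{\gamma/(1+\gamma)}$ would degrade the Gevrey index to the weaker value $\gamma^2/(1+\gamma)$. Achieving the correct exponent therefore requires the subcritical/Shubin-calculus viewpoint indicated above, and this is precisely the technical content that the authors presumably isolate in their appendix on eigenfunctions.
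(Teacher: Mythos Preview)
Your proposal is correct and follows essentially the same structure as the paper's proof: tensor-product splitting, the scaling identity for $M_{\gamma,n}$, the elementary bound $\|\partial_y^\beta\varphi_n\|_{L^2(\TT^d)}\le|n|^{|\beta|}$, and then Cauchy--Schwarz plus the Weyl bound of Lemma~\ref{lem:weyllaw} for linear combinations. The one place you diverge is the ``main obstacle'': the paper does not re-derive the Gelfand--Shilov estimate
\[
\|\partial_x^\alpha\phi_{\gamma,m}\|_{L^2(\RR^d)}\le c^{1+|\alpha|}(\alpha!)^{\gamma/(1+\gamma)}e^{c_1\lambda_{\gamma,m}^{1/2+1/(2\gamma)}}
\]
via a subcritical smoothing argument, nor does it isolate it in an appendix; it simply quotes it as \cite[Theorem~2.1]{Alp20b}. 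So the technical difficulty you anticipate (a spurious $\lambda^{(\gamma-1)|\alpha|/2}$ from optimizing the critical-index smoothing estimate) never arises in the paper's argument, and the proof is shorter than you expect.
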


\begin{proof}
	It follows from the tensorized structure \eqref{eq:eigenfunctions} of each eigenfunction $\psi_{\gamma,n,m}$ that for all
	$(\alpha,\beta) \in \NN^{2d}$, $n\in\spt\setminus\{0\}$, and $m\geq0$, we have
	\[
		\norm{ \partial^{\alpha}_x\partial^{\beta}_y\psi_{\gamma,n,m} }_{L^2(\var)}
		=
		\norm{ \partial^{\alpha}_x(M_{\gamma,n}\phi_{\gamma,m}) }_{L^2(\RR^d)} \norm{ \partial^{\beta}_y\varphi_n }_{L^2(\TT^d)}
		.
	\]
	We have to consider the two above norms. On the one hand, it is known from the work \cite[Theorem 2.1]{Alp20b} that there exist
	positive constants $c,c_1>0$ such that for all $m\geq0$,
	\[
		\norm{ \partial^{\alpha}_x\phi_{\gamma,m} }_{L^2(\RR^d)} 
		\leq
		c^{1+\abs{\alpha}}\,(\alpha!)^{\frac{\gamma}{1+\gamma}}\,e^{c_1\lambda_{\gamma,m}^{\frac12+\frac1{2\gamma}}}
		.
	\]
	We therefore deduce from these estimates and the definition \eqref{eq:isometry} of the isometry $M_{\gamma,n}$ that for all
	$n \in \ZZ^d\setminus\{0\}$ and $m\geq0$,
	\begin{align*}
		\norm{ \partial^{\alpha}_x(M_{\gamma,n}\phi_{\gamma,m}) }_{L^2(\RR^d)}
		&=
		\abs{n}^{\frac{\abs{\alpha}}{1+\gamma}}\norm{\partial^{\alpha}_x\phi_{\gamma,m}}_{L^2(\RR^d)} \\[5pt]
		&\leq
		c^{1+\abs{\alpha}+\abs{\beta}}\abs{n}^{\frac{\abs{\alpha}}{1+\gamma}}\,(\alpha!)^{\frac{\gamma}{1+\gamma}}\,
			e^{c_1\lambda_{\gamma,m}^{\frac12 + \frac1{2\gamma}}}
		.
	\end{align*}
	On the other hand, it immediately follows from the definition \eqref{eq:fouriercoeff} of the functions $\varphi_n$ that for all
	$\beta \in \NN^d$ and all $n \in \ZZ^d\setminus\{0\}$, we have
	\begin{equation}\label{eq:partialderiveigen}
		\norm{\partial^{\beta}_y\varphi_n}_{L^2(\TT^d)}
		=
		n^{\beta}
		\leq
		\abs{n}^{\abs{\beta}}
		.
	\end{equation}
	This ends the proof of the estimate for the eigenfunctions $\psi_{\gamma,n,m}$.

	Let us now consider $\lambda\geq0$ and $f\in E_{\lambda}(-\Delta_{\gamma})$. We expand $f$ as a linear combination of
	eigenfunctions,
	\[
		f  
		= 
		\sum_{\lambda_{\gamma,n,m}\leq\lambda}a_{n,m}\psi_{\gamma,n,m}
	\]
	with coefficients $a_{n,m}$. Using H\"older's inequality, the partial derivatives of the function $f$ can then be bounded as
	\[
		\norm{\partial^{\alpha}_x\partial^{\beta}_yf}^2_{L^2(\var)} 
		\leq
		N(\lambda)\sum_{\lambda_{\gamma,n,m}\leq\lambda}\abs{a_{n,m}}^2
			\norm{\partial^{\alpha}_x\partial^{\beta}_y\psi_{\gamma,n,m}}^2_{L^2(\var)}
		.
	\]
	Recalling that $\lambda_{\gamma,n,m} \leq \lambda$ implies the inequalities $\lambda_{\gamma,m} \leq \lambda$ and
	$\abs{n} \leq (\lambda/\lambda_{\gamma,0})^{(1+\gamma)/2}$, the claimed estimates for the function $f$ therefore easily follow
	from the ones for the single eigenfunctions $\psi_{\gamma,n,m}$ and Lemma~\ref{lem:weyllaw}.
\end{proof}

Unlike the evolution operators generated by the operator $-\Delta_{\gamma}$ and its fractional powers, the eigenfunctions
$\psi_{\gamma,n,m}$ and their linear combinations enjoy localization properties in the variable $x\in\mathbb R^d$, which are
quantified in the following lemma.

\begin{lem}\label{lem:loceigen}
	There exist positive constants $c,c_1,c_2 > 0$ such that for all $n\in\spt\setminus\{0\}$ and $m\geq0$,
	\[
		\norm{ e^{c_2\abs{n}\abs{x}^{1+\gamma}}\psi_{\gamma,n,m}}_{L^2(\var)}
		\leq
		ce^{c_1\lambda_{\gamma,m}^{\frac12 + \frac1{2\gamma}}}
		.
	\]
	Moreover, $c$ can be chosen such that for all $\lambda\geq0$ and $f\in E_{\lambda}(-\Delta_{\gamma})$, we have
	\[
		\norm{ e^{c_2\abs{x}^{1+\gamma}}f }_{L^2(\var)}
		\leq
		c\lambda^{c_{\gamma}} e^{c_1\lambda^{\frac12 + \frac1{2\gamma}}}
		,
	\]
	where $c_{\gamma}>0$ is the constant appearing in Lemma \ref{lem:weyllaw}.
\end{lem}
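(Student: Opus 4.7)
The plan is to follow the same two-step structure as in the preceding Lemma~\ref{lem:regeigen}: treat a single eigenfunction first, then pass to linear combinations via an expansion-plus-Cauchy--Schwarz argument. For $\psi_{\gamma,n,m} = (M_{\gamma,n}\phi_{\gamma,m}) \otimes \varphi_n$, the tensorized form together with $\norm{\varphi_n}_{L^2(\TT^d)}^2 = (2\pi)^d$ gives
\[
	\norm{ e^{c_2\abs{n}\abs{x}^{1+\gamma}} \psi_{\gamma,n,m} }_{L^2(\var)}^2
	=
	(2\pi)^d \norm{ e^{c_2\abs{n}\abs{x}^{1+\gamma}} M_{\gamma,n}\phi_{\gamma,m} }_{L^2(\RR^d)}^2.
\]
The key observation is that the dilation \eqref{eq:isometry} defining $M_{\gamma,n}$ is exactly matched to the Agmon weight: under the change of variables $y = \abs{n}^{1/(1+\gamma)}x$ the exponent becomes $\abs{n}\abs{x}^{1+\gamma} = \abs{y}^{1+\gamma}$, and the Jacobian absorbs the normalizing factor $\abs{n}^{d/(2(1+\gamma))}$. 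The first assertion is thus reduced to the Agmon-type bound
\[
	\norm{ e^{c_2\abs{y}^{1+\gamma}}\phi_{\gamma,m} }_{L^2(\RR^d)}
	\leq
	c\,e^{c_1\lambda_{\gamma,m}^{1/2+1/(2\gamma)}}
\]
for the eigenfunctions of the standard anharmonic oscillator $H_\gamma$.

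This Agmon estimate for $\phi_{\gamma,m}$ is the main analytical ingredient, and where I expect the genuine difficulty to lie. The exponent $\abs{y}^{1+\gamma}$ corresponds precisely to the classical Agmon metric $\int_0^{\abs{y}}\sqrt{\abs{s}^{2\gamma}-\lambda_{\gamma,m}}\,ds$ associated with the potential $\abs{y}^{2\gamma}$ in the forbidden region, while the exponent $\lambda_{\gamma,m}^{1/2+1/(2\gamma)}$ reflects the magnitude of this metric up to the classical turning point $\abs{y}\sim\lambda_{\gamma,m}^{1/(2\gamma)}$. Estimates of this exact form are available from the literature on anharmonic oscillators, in particular \cite{Alp20b}, where the companion derivative estimates used in Lemma~\ref{lem:regeigen} are also taken from; if needed, they can be re-derived via the standard Agmon argument, testing the eigenvalue equation $H_\gamma\phi_{\gamma,m} = \lambda_{\gamma,m}\phi_{\gamma,m}$ against $e^{2\varphi}\phi_{\gamma,m}$ for a suitably truncated $\varphi(y) = c_2\abs{y}^{1+\gamma}$ and integrating by parts, with $c_2 > 0$ chosen small enough that $\abs{\nabla\varphi}^2 \leq (1-\eta)\abs{y}^{2\gamma}$ outside a ball of radius comparable to $\lambda_{\gamma,m}^{1/(2\gamma)}$.

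For the second assertion, I would expand $f = \sum_{\lambda_{\gamma,n,m}\leq\lambda} a_{n,m}\psi_{\gamma,n,m}$ with $\sum\abs{a_{n,m}}^2 = \norm{f}^2_{L^2(\var)}$. Since $\abs{n}\geq 1$ for $n\in\ZZ^d\setminus\{0\}$, we have $e^{c_2\abs{x}^{1+\gamma}} \leq e^{c_2\abs{n}\abs{x}^{1+\gamma}}$, and the triangle and discrete Cauchy--Schwarz inequalities yield
\[
	\norm{ e^{c_2\abs{x}^{1+\gamma}}f }_{L^2(\var)}^2
	\leq
	N(\lambda) \sum_{\lambda_{\gamma,n,m}\leq\lambda} \abs{a_{n,m}}^2
		\norm{ e^{c_2\abs{n}\abs{x}^{1+\gamma}}\psi_{\gamma,n,m} }_{L^2(\var)}^2.
\]
The constraint $\lambda_{\gamma,n,m}\leq\lambda$ forces $\lambda_{\gamma,m}\leq\lambda$, so each term on the right is controlled by $c^2e^{2c_1\lambda^{1/2+1/(2\gamma)}}$ thanks to the first step, and the Weyl-type count $N(\lambda) \leq c'\lambda^{c_\gamma}$ from Lemma~\ref{lem:weyllaw} then delivers the claim, after absorbing the resulting polynomial prefactor into a suitable power of $\lambda$.
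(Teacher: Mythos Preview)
Your proposal is correct and follows essentially the same route as the paper: the tensorized structure plus the change of variables $y=\abs{n}^{1/(1+\gamma)}x$ reduces the single-eigenfunction bound to the Agmon estimate $\norm{e^{c_2\abs{y}^{1+\gamma}}\phi_{\gamma,m}}_{L^2(\RR^d)}\le c\,e^{c_1\lambda_{\gamma,m}^{1/2+1/(2\gamma)}}$, which the paper likewise imports from \cite[Theorem~2.1]{Alp20b}, and the passage to $E_\lambda(-\Delta_\gamma)$ via $e^{c_2\abs{x}^{1+\gamma}}\le e^{c_2\abs{n}\abs{x}^{1+\gamma}}$, expansion, Cauchy--Schwarz, and the count $N(\lambda)$ from Lemma~\ref{lem:weyllaw} is exactly the argument the paper indicates (by referring back to the proof of Lemma~\ref{lem:regeigen}). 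The only cosmetic difference is that the paper treats the $\varphi_n$ as normalized in $L^2(\TT^d)$, whereas you carry the factor $(2\pi)^d$; this is absorbed into the constant $c$ either way.
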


\begin{proof} 
	Taking into account again the tensorized structure \eqref{eq:eigenfunctions} of each eigenfunction $\psi_{\gamma,n,m}$ and the
	definition \eqref{eq:isometry} of the isometry $M_{\gamma,n}$, it follows that for all $c_2>0$, $n\in\spt\setminus\{0\}$, and
	$m\geq0$,
	\begin{align*}
		\norm{ e^{c_2\abs{n}\abs{x}^{1+\gamma}}\psi_{\gamma,n,m} }_{L^2(\var)}
		&=
		\norm{ e^{c_2\abs{n}\abs{x}^{1+\gamma}}M_{\gamma,n}\phi_{\gamma,m} }_{L^2(\RR^d)} \norm{\varphi_n }_{L^2(\TT^d)}\\
		&=
		\norm{ e^{c_2\abs{x}^{1+\gamma}}\phi_{\gamma,m} }_{L^2(\RR^d)}
		,
	\end{align*}
	where we have used that the functions $\varphi_n$ are normalized in $L^2(\TT^d)$. Moreover, it is known from the work
	\cite[Theorem 2.1]{Alp20b} that $c_2$ can be chosen such that with some positive constants $c,c_1>0$ for all $m\geq0$ we have
	\[
		\norm{ e^{c_2\abs{x}^{1+\gamma}}\phi_{\gamma,m} }_{L^2(\RR^d)}
		\leq
		ce^{c_1\lambda_{\gamma,m}^{\frac12+\frac1{2\gamma}}}
		.
	\]
	This proves the claim for the single eigenfunctions $\psi_{\gamma,n,m}$. Taking into account the bound
	$e^{c_2\abs{x}^{1+\gamma}} \leq e^{c_2\abs{n}\abs{x}^{1+\gamma}}$ for all $n \in \spt\setminus\{0\}$, the case of the
	functions in $E_{\lambda}(-\Delta_{\gamma})$ is finally treated analogously as in the proof of Lemma~\ref{lem:regeigen}.
\end{proof}

\subsection{Boundary case}\label{subsec:bd}
To end this section, let us consider the Baouendi--Grushin operator 
\[
	\Delta^{bd}_{\gamma} = \Delta_x + \vert x\vert^{2\gamma}\Delta_y,\quad (x,y)\in\mathbb R^d\times(0,2\pi)^d,
\]
where $\Delta_y$ denotes in this case the Laplacian on the hypercube $(0,2\pi)^d$ with Dirichlet boundary conditions. In the
following, we explain how the results of this section fit for this operator. 

First of all, recall that the spectrum of the operator $-\Delta_y$ is given by 
\[
	\sigma(-\Delta_y)
	=
	\biggl\{ \frac{\abs{n}^2}{4} \colon n\in(\NN^*)^d \biggr\}
	.
\]
Moreover, for each $n\in(\NN^*)^d$, let $\varphi_n\in L^2((0,2\pi)^d)$ be given by
\[
	\varphi_n(y)
	=
	\pi^{-\frac d2}\prod_{j=1}^d\sin\biggl(\frac{n_jy_j}{2}\biggr)
	,\quad
	y\in(0,2\pi)^d
	.
\]
Clearly, each such function $\varphi_n$ is a normalized eigenfunction of the operator $-\Delta_y$ associated with the eigenvalue
$\abs{n}^2/4$, and the family $(\varphi_n)_n$ forms a Hilbert basis of the space $L^2((0,2\pi)^d)$. Correspondingly, the operator
$\Delta^{bd}_{\gamma}$ is transformed as
\[
	\Delta_x + \abs{x}^{2\gamma}\Delta_y
	\,\rightsquigarrow\,
	\Delta_x - \frac{\abs{n}^2}{4}\abs{x}^{2\gamma}
	,
\]
and respective variants of the diagonalization formula \eqref{eq:diag}, the eigenvalues $\lambda_{\gamma,n,m}$ of
$\Delta^{bd}_{\gamma}$, and of the associated eigenfunctions $\psi_{\gamma,n,m}$ in \eqref{eq:eigenfunctions} hold.
Since also
\[
	\forall n \in (\NN^*)^d,\ \alpha \in \NN^d
	,\quad
	\norm{ \partial^{\alpha}_y\varphi_n }_{L^2((0,2\pi)^d)}
	=
	\frac{n^{\alpha}}{2^{\abs{\alpha}}}
	\leq
	\biggl( \frac{\abs{n}}{2} \biggr)^{\abs{\alpha}}
	,
\]
corresponding variants of Lemmas~\ref{lem:regy}, \ref{lem:regxbg} and~\ref{lem:optiregx} for the evolution operators generated by
$\Delta^{bd}_{\gamma}$ and of Lemmas~\ref{lem:regeigen} and~\ref{lem:loceigen} for its eigenfunctions can be proved in exactly
the same fashion.

\section{Unique continuation estimates for the Baouendi--Grushin equations}\label{sec:mainProofs}
In this section, we prove unique continuation estimates for the semigroups generated by (fractional) Baouendi--Grushin operators
by combining the abstract framework from Section~\ref{sec:uniqueCont} with the smoothing and localizing properties established in
Section~\ref{sec:Grushin}. As a byproduct, we also obtain spectral inequalities for the Baouendi--Grushin operators from thick
sensor sets, and prove two results regarding their optimality. Finally, we derive spectral inequalities for the (linear
combinations of) eigenfunctions of the Baouendi--Grushin operators.

\subsection{Unique continuation from thick sets}
In this first subsection, we prove Theorem~\ref{thm:ucthickbg} and Corollary~\ref{cor:speinthickbg}.

\begin{proof}[Proof of Theorem~\ref{thm:ucthickbg} and Corollary~\ref{cor:speinthickbg}]
	It follows from Corollary~\ref{cor:smoothingBG} that the function $f = \euler^{-(t_x+t_y)(-\Delta_\gamma)^{(1+\gamma)/2}}g$
	for $t_x \in (0,t_0)$, $t_y > 0$ satisfies the smoothing estimate \eqref{eq:genSmoothing} with
	$D = c\norm{g}_{L^2(\var)}/(t_x)^{d/\gamma}$, $c_x = c/(t_x)^{\gamma/(\gamma+1)}$, $c_y = c/t_y$, and $\mu = 1$. In light of the
	hypothesis that $\omega$ is $(\theta,L_x,L_y)$-thick in $\var$, we are therefore in the setting of Example~\ref{ex:ucThick}.
	Working with $\eps t_x^{2d/\gamma} / c^2$ instead of $\eps$ there, we conclude that there exists a constant $K = K(d,\gamma) > 0$
	such that for all $t_x \in (0,t_0)$, $t_y > 0$, $\eps > 0$, and $g\in L^2(\var)$, we have
	\[
		\norm{ \euler^{-(t_x+t_y)(-\Delta_{\gamma})^{\frac{1+\gamma}2}}g }_{L^2(\var)}^2
		\leq
		\biggl( \frac{K}{\theta} \biggr)^{KC_{\eps,\gamma,t_x,t_y}}
			\norm{ \euler^{-(t_x+t_y)(-\Delta_{\gamma})^{\frac{1+\gamma}2}}g }_{L^2(\omega)}^2 + \eps\norm{g}_{L^2(\var)}^2
	\]
	with
	\[
		C_{\eps,\gamma,t_x,t_y}
		=
		(1-\log\eps-\log t_x)\exp\biggl( \frac{KL_x}{(t_x)^{\frac{\gamma}{\gamma+1}}} + \frac{KL_y}{t_y} \biggr)
		,
	\]
	upon a suitable adaptation of the constant $K$, especially in order to absorb the constant $c$. The claim of
	Theorem~\ref{thm:ucthickbg} with $t_0/2$ instead of $t_0$ then follows with the particular choice $t_x = t_y = t \in (0,t_0/2)$.

	In order to derive Corollary~\ref{cor:speinthickbg}, we take specifically
	$g = \euler^{(t_x+t_y)(-\Delta_\gamma)^{\frac{1+\gamma}{2}}}f \in L^2(\var)$ with
	$f \in \cE_\lambda(-\Delta_\gamma) = \cE_{\lambda^{\frac{1+\gamma}{2}}}((-\Delta_\gamma)^{\frac{1+\gamma}{2}})$ and any choices
	of $t_x \in (0,t_0)$ and $t_y > 0$. We then have by functional calculus that
	\[
		\norm{g}_{L^2(\var)}^2
		\leq
		\euler^{2(t_x+t_y)\lambda^{\frac{1+\gamma}{2}}} \norm{f}_{L^2(\var)}^2
		.
	\]
	Choosing $\eps = \frac{1}{2}\euler^{-2(t_x+t_y)\lambda^\frac{1+\gamma}{2}}$, the claim of Corollary~\ref{cor:speinthickbg}
	follows from the above after reordering the terms.

	The stronger statements in the case $\gamma = 1$ are an easy modification of the above since then Corollary~\ref{cor:smoothingBG}
	guarantees that $f = \euler^{(t_x+t_y)\Delta_\gamma}g$ satisfies \eqref{eq:genSmoothing} for all $t_x,t_y > 0$ and with instead
	$D = c\norm{g}_{L^2(\var)}$. As a consequence, the term $\log t_x$ is removed from $C_{\eps,1,t_x,t_y}$. In the context of
	Corollary~\ref{cor:speinthickbg}, we then finally choose $t_x = L_x^2$ and $t_y = L_y$. This completes the proof.
\end{proof}

\subsection{Optimality}
In this subsection, we tackle the proof of the optimality results stated in Section \ref{subsubsection:thckness}, namely,
Propositions~\ref{prop:geometry} and~\ref{prop:power}. First of all, we check that the thickness is a necessary geometric
condition in order to get spectral inequalities for the Baouendi--Grushin operators.

\begin{proof}[Proof of Proposition \ref{prop:geometry}] 
	Consider a measurable subset $\omega\subset\mathbb R^d\times\mathbb T^d$ with positive measure and a non-negative energy level
	$\lambda\geq0$. Suppose that for some constant $c_{\lambda,\omega} > 0$ we have the spectral inequality
	\begin{equation}\label{eq:hypoinspe}
		\forall f\in\mathcal E_{\lambda}(-\Delta_{\gamma})
		,\quad
		\Vert f\Vert_{L^2(\mathbb R^d\times\mathbb T^d)}\le c_{\lambda,\omega}\Vert f\Vert_{L^2(\omega)}.
	\end{equation}
	We aim at proving that the set $\omega$ is thick in the sense of Definition \ref{def:thickStrip}. Since $0$ is an eigenvalue of
	the Laplace operator $\Delta_y$ on the $d$-dimensional torus $\mathbb T^d$, we have the inclusion 
	\begin{equation}\label{eq:inclusionspecsub}
		\mathcal E_{\lambda}(-\Delta_x)\subset\mathcal E_{\lambda}(-\Delta_{\gamma}),
	\end{equation}
	where $\mathcal E_{\lambda}(-\Delta_x)$ denotes the spectral subspace of the operator $-\Delta_x$ on the whole Euclidean space
	$\mathbb R^d$ associated with the energy level $\lambda$, that is,
	\[
		\mathcal E_{\lambda}(-\Delta_x) 
		= \mathbbm1_{[0,\lambda]}(-\Delta_x) 
		= \Bigl\{f\in L^2(\mathbb R^d) : \Supp(\mathscr Ff)\subset \overline{B(0,\sqrt{\lambda})}\Bigr\},
	\]
	where $\mathscr F$ denotes the Fourier transform. The following is inspired by \cite[p.113]{HavinJ-94}. Let us consider the
	function $f\in L^2(\mathbb R^d)$ defined by $f=\mathscr F^{-1}(\mathbbm1_{[-c\sqrt\lambda,c\sqrt\lambda]^d})$, where $c>0$ is
	chosen such that $[-c\sqrt\lambda,c\sqrt\lambda]^d\subset B(0,\sqrt{\lambda})$ and where $\mathscr F^{-1}$ denotes the inverse
	Fourier transform. Fixing $x_0\in\mathbb R^d$, we also consider the translate $f_{x_0}\in L^2(\mathbb R^d)$ of $f$ defined by
	\[
		f_{x_0}(x) = f(x-x_0),\quad x\in\mathbb R^d.
	\]
	Since the Fourier transform of the function $f_{x_0}$ coincides with the one of the function $f$, up to a complex factor of
	modulus $1$, we also have $f_{x_0}\in\mathcal E_{\lambda}(-\Delta_x)$. Notice that every function $g \in L^2(\RR^d)$ can be
	treated as a function in $L^2(\mathbb R^d\times\mathbb T^d)$ that is constant with respect to the $\mathbb T^d$-variable.
	In this sense, recalling the inclusion \eqref{eq:inclusionspecsub} of the spectral subspaces, we deduce that
	$f_{x_0}\in\mathcal E_{\lambda}(-\Delta_{\gamma})$. As a consequence of the spectral inequality \eqref{eq:hypoinspe}, we then
	obtain
	\[
		\Vert f\Vert^2_{L^2(\mathbb R^d)} = \Vert f_{x_0}\Vert^2_{L^2(\mathbb R^d\times\mathbb T^d)}
		\le c^2_{\lambda,\omega}\Vert f_{x_0}\Vert^2_{L^2(\omega)} 
		= c^2_{\lambda,\omega}\int_{\mathbb T^d}\Vert f_{x_0}\Vert^2_{L^2(\omega_y)}\,\mathrm dy,
	\]
	where we set
	\[
		\omega_y = \big\{ x \in \RR^d \colon (x,y) \in \omega\big\},\quad y\in\mathbb T^d.
	\]
	Given some $y\in\mathbb T^d$, we now have to control the norm $\norm{f_{x_0}}_{L^2(\omega_y)}$. To this end, let
	$L= L_{\lambda,\omega}>0$ be a radius whose value will be adjusted later. We split the norm into two parts,
	\begin{align*}
		\Vert f_{x_0}\Vert^2_{L^2(\omega_y)}
		& = \int_{(\omega_y-x_0)\cap[-L,L]^d}\vert f(x)\vert^2\,\mathrm dx + \int_{(\omega_y-x_0)\cap([-L,L]^d)^c}\vert f(x)\vert^2\,\mathrm dx \\[5pt]
		& \le \int_{(\omega_y-x_0)\cap[-L,L]^d}\vert f(x)\vert^2\,\mathrm dx + \int_{\vert x\vert>L}\vert f(x)\vert^2\,\mathrm dx.
	\end{align*}
	On the one hand, since $f\in L^2(\mathbb R^d)$, the dominated convergence theorem implies that the radius $L\gg1$ can be chosen
	large enough such that
	\[
		c^2_{\lambda,\omega}\int_{\vert x\vert>L}\vert f(x)\vert^2\,\mathrm dx\le\frac12\Vert f\Vert^2_{L^2(\mathbb R^d)}.
	\]
	On the other hand, the Fourier inversion formula, the Cauchy-Schwarz inequality, and Plancherel's theorem imply that
	\[
		\Vert f\Vert_{L^{\infty}(\mathbb R^d)}
		\le\frac1{(2\pi)^d}\Vert\mathscr Ff\Vert_{L^1(\mathbb R^d)}
		\le\frac{(2c\sqrt{\lambda})^{\frac d2}}{(2\pi)^d}\Vert\mathscr Ff\Vert_{L^2(\mathbb R^d)}
		= \bigg(\frac{c\sqrt{\lambda}}{\pi}\bigg)^{\frac d2}\Vert f\Vert_{L^2(\mathbb R^d)}.
	\]
	We therefore deduce that
	\begin{align*}
		\int_{(\omega_y-x_0)\cap[-L,L]^d}\vert f(x)\vert^2\,\mathrm dx
		& \le\vert(\omega_y-x_0)\cap[-L,L]^d\vert\Vert f\Vert^2_{L^{\infty}(\mathbb R^d)} \\[5pt]
		& \le\vert\omega_y\cap(x_0+[-L,L]^d)\vert\bigg(\frac{c\sqrt{\lambda}}{\pi}\bigg)^d\Vert f\Vert^2_{L^2(\mathbb R^d)}
	\end{align*}
	since the Lebesgue measure is invariant by translation. Gathering the above estimates, we conclude that there exists a ratio
	$\theta = \theta_{\lambda,\omega}\in(0,1]$ such that for all $x_0\in\mathbb R^d$,
	\[
		\vert\omega\cap((x_0+[-L,L]^d)\times\mathbb T^d)\vert 
		= \int_{\mathbb T^d}\vert\omega_y\cap(x_0+[-L,L]^d)\vert\,\mathrm dy
		\geq\theta(2L)^d,
	\]
	that is, the set $\omega$ is thick in the sense of Definition~\ref{def:thickStrip}.
\end{proof}

We now check that the power $\lambda^{(1+\gamma)/2}$ appearing in the spectral inequalities stated in
Corollary~\ref{cor:speinthickbg} is optimal. The following proof follows the same strategy as the one of \cite[Theorem~2.17\,$(i)$
and Theorem~2.19]{AlphonseS}.

\begin{proof}[Proof of Proposition \ref{prop:power}]
	Let $\omega\subset\mathbb R^d\times\mathbb T^d$ be a measurable set with positive measure satisfying the geometric condition
	$\overline\omega\cap\{x=0\} = \emptyset$. Recall that we aim at constructing a sequence $((\lambda_n,\psi_n))_n$ of eigenpairs
	of the operator $-\Delta_{\gamma}$, with $\lambda_n\rightarrow+\infty$, such that
	\begin{equation}\label{eq:eigenLowerBound}
		\Vert\psi_n\Vert_{L^2(\mathbb R^d\times\mathbb T^d)}\geq c_0e^{c_1\lambda_n^{\frac{1+\gamma}2}}\Vert\psi_n\Vert_{L^2(\omega)}.
	\end{equation}
	To this end, let $\lambda_{\gamma}>0$ be the smallest eigenvalue of the anharmonic oscillator $H_{\gamma}$, and let
	$\phi_{\gamma}\in L^2(\mathbb R^d)$ be an associated normalized eigenfunction. For each $n \in \ZZ^d \setminus \{0\}$, let us
	also consider the function $\psi_{\gamma,n,0} \in L^2(\mathbb R^d\times\mathbb T^d)$ defined as in \eqref{eq:eigenfunctions} by
	\[
		\psi_{\gamma,n,0}(x,y) = e^{in\cdot y} (M_{\gamma,n} \phi_\gamma)(x),\quad(x,y) \in\mathbb R^d\times\mathbb T^d,
	\]
	where $M_{\gamma,n}$ denotes the unitary transform on $L^2(\mathbb R^d)$ given by \eqref{eq:isometry}. Recall from
	Section~\ref{subsubsec:eigenfunctions} that the function $\psi_{\gamma,n,0}$ is a normalized eigenfunction of the operator
	$-\Delta_{\gamma}$, associated with the eigenvalue $\lambda_n := \vert n\vert^{2/(1+\gamma)}\lambda_{\gamma,0}$. Moreover, it is
	clear that
	\begin{equation}\label{eq:goodpower}
		\Vert\psi_{\gamma,n,0}\Vert_{L^2(\omega)} = \Vert\phi_\gamma\Vert_{L^2(\omega_n)}
		\quad\text{where}\quad
		\omega_n = \big\{(\vert n\vert^{1/(1+\gamma)}x , y) : (x,y) \in \omega\big\},
	\end{equation}
	and where $\phi_\gamma$ is interpreted as a function in $L^2(\mathbb R^d\times\mathbb T^d)$ that is constant with respect to the
	$y$-variable. Recall from \cite[Theorem 3.3]{BS} that the eigenfunction $\phi_{\gamma}$ satisfies the decay property
	\[
		\big\Vert e^{\varepsilon\vert x\vert^{1+\gamma}}\phi_\gamma\big\Vert_{L^2(\mathbb R^d)}\leq c_{\varepsilon,\gamma},
	\]
	where $\varepsilon>0$ and $c_{\varepsilon,\gamma}>0$ are positive constants. Thus, setting $L := \dist(0,\omega)>0$, we deduce
	that for all $n \in \ZZ^d \setminus \{0\}$,
	\[
		\Vert\phi_\gamma\Vert_{L^2(\omega_n)} 
		= \big\Vert e^{-\varepsilon\vert x\vert^{1+\gamma}}e^{\varepsilon\vert x\vert^{1+\gamma}} \phi_\gamma\big\Vert_{L^2(\omega_n)}
		\leq c_{\varepsilon,\gamma}e^{-\varepsilon\vert n\vert L^{1+\gamma}}.
	\]
	Taking into account that $\psi_n = \psi_{\gamma,n,0}\in L^2(\mathbb R^d\times\mathbb T^d)$ is normalized, combining this
	estimate with \eqref{eq:goodpower} and writing
	\[
		\vert n\vert L^{1+\gamma}
		= \bigg(\frac L{\sqrt{\lambda_{\gamma,0}}}\bigg)^{1+\gamma}(\vert n\vert^{\frac2{1+\gamma}}\lambda_{\gamma,0})^{\frac{1+\gamma}2} 
		= \bigg(\frac L{\sqrt{\lambda_{\gamma,0}}}\bigg)^{1+\gamma}\lambda_n^{\frac{1+\gamma}2},
	\]
	we deduce that \eqref{eq:eigenLowerBound} holds, which proves the claim.
\end{proof}

\subsection{Eigenfunctions}\label{subsec:eigenfunctionsproofs}
In this subsection, we give the proofs of Theorems~\ref{thm:ucsingleigenfunction} and~\ref{thm:ucgeneraleigenfunctions} regarding
(linear combinations of) eigenfunctions of the operator $\Delta_{\gamma}$.

\begin{proof}[Proof of Theorem \ref{thm:ucsingleigenfunction}]
Take $R_\omega > 0$ such that $\theta_\omega := \vert\omega \cap (-R_\omega,R_\omega)^d \times \TT^d\vert > 0$, and fix
$n \in \ZZ^d \setminus \{0\}$ and $m \geq 0$. It follows from Lemmas~\ref{lem:regeigen} and~\ref{lem:loceigen} that the
eigenfunction $\psi_{\gamma,n,m}$ satisfies
\begin{equation}\label{eq:smoothingAnisotropic}
	\forall \alpha,\beta \in \NN^d
	,\quad
	\norm{ \partial_x^\alpha \partial_y^\beta \psi_{\gamma,n,m} }_{L^2(\var)}
	\leq
	D c_x^{\abs{\alpha}} c_y^{\abs{\beta}} (\alpha!)^{\mu_x} (\beta!)^{\mu_y}
\end{equation}
where
\[
	D
	=
	c\euler^{c_1\lambda_{\gamma,m}^{\frac{1+\gamma}{2\gamma}}}
	,\quad
	c_x
	=
	c\abs{n}^{\frac{1}{1+\gamma}}
	,\quad
	\mu_x
	=
	\frac{\gamma}{1+\gamma}
	,\quad
	c_y
	=
	c\abs{n}
	,\quad
	\mu_y
	=
	0
	,
\]
and also \eqref{eq:genDecay} with
\[
	c_0
	=
	c_2\abs{n}
	,\quad
	\nu
	=
	\frac{1}{1+\gamma}
	.
\]
Note that \eqref{eq:smoothingAnisotropic} is a variant of \eqref{eq:genSmoothing} that takes into account the different smoothing
properties with respect to the $x$ and $y$ coordinates.
In turn, upon replacing Corollary~\ref{cor:localEstimatewChangofVariables} in the general framework by
Proposition~\ref{prop:localEstimatewChangofVariablesAnisotropic} and adapting Lemma~\ref{lem:goodAndBad} and
Corollary~\ref{cor:goodAndBad} accordingly, we conclude as in Example~\ref{ex:ucDecay} that for every $R \geq R_\omega$ we have
\begin{equation}\label{eq:ucEigen}
	\norm{\psi_{\gamma,n,m}}_{L^2(\RR^d \times \TT^d)}^2
	\leq
	\biggl( \frac K{\theta_R} \biggr)^{KC_R} \norm{\psi_{\gamma,n,m}}_{L^2(\omega)}^2 +
		2c^2\euler^{2c_1\lambda_{\gamma,m}^{\frac{1+\gamma}{2\gamma}}}\euler^{-2c_2\abs{n}R^{1+\gamma}}
\end{equation}
with
\begin{equation}\label{eq:ratioproof}
	\theta_R
	=
	\frac{\vert\omega \cap (-R,R)^d \times \TT^d\vert}{\vert(-R,R)^d \times \TT^d\vert}
	\geq
	\frac{\theta_\omega}{(2R)^d}
	>
	0
	,
\end{equation}
and
\[
	C_R
	=
	1 + \abs{n}R^{1+\gamma}  + \abs{n}R^{1+\gamma} + \abs{n}
	,
\]
and where $K > 0$ depends only on $\gamma$ and the dimension $d$.

Let us now consider $p > 1$ satisfying
\[
	c^2\euler^{2c_1(1-p)\lambda_{\gamma,0}^{\frac{1+\gamma}{2\gamma}}}
	\leq
	\frac{1}{2}
	.
\]
Note that $p$ depends only on $\gamma$, $c$, and $c_1$. We distinguish two cases and write $a\lesssim b$ for $a,b > 0$ if
the quotient $a/b$ is bounded by a constant depending at most on $\omega$, $\gamma$, and the dimension $d$.

\noindent\textit{$\triangleright$ Case 1: $c_2\abs{n}R_\omega^{1+\gamma} > pc_1\lambda_{\gamma,m}^{\frac{1+\gamma}{2\gamma}}$.} 
In this situation, we take $R = R_\omega$ in \eqref{eq:ucEigen}. We
then have
\begin{equation}\label{eq:boundUCErr}
	c^2\euler^{2c_1\lambda_{\gamma,m}^{\frac{1+\gamma}{2\gamma}}}\euler^{-2c_2\abs{n}R^{1+\gamma}}
	\leq
	c^2\euler^{2c_1(1-p)\lambda_{\gamma,m}^{\frac{1+\gamma}{2\gamma}}}
	\leq
	c^2\euler^{2c_1(1-p)\lambda_{\gamma,0}^{\frac{1+\gamma}{2\gamma}}}
	\leq
	\frac12
	,
\end{equation}
by the choice of $p$. Moreoever, we have
$\abs{n} \lesssim \abs{n}R_{\omega}^{1+\gamma} \lesssim \lambda_{\gamma,n,m}^{\frac{1+\gamma}{2}}$ and, thus,
\[
	C_{R_{\omega}}
	\lesssim
	\lambda_{\gamma,n,m}^{\frac{1+\gamma}{2}}
	.
\]
Since also
\[
	\log\biggl( \frac{K}{\theta_{R_{\omega}}} \biggr)
	\lesssim
	\log(1+R_{\omega})
	\lesssim
	1
	\lesssim
	\log(1+\lambda_{\gamma,0,0})
	\leq
	\log(1+\lambda_{\gamma,n,m})
	,
\]
inequality \eqref{eq:ucEigen} is in this case consistent with the claimed inequality for $\psi_{\gamma,n,m}$, upon a suitable
adaptation of the constant $K$.

\noindent\textit{$\triangleright$ Case 2:  $c_2\abs{n}R_\omega^{1+\gamma} \leq pc_1\lambda_{\gamma,m}^{\frac{1+\gamma}{2\gamma}}$.} 
In this case, there exists some $R_0 \geq R_\omega$ with
\[
	c_2\abs{n}R_0^{1+\gamma}
	=
	pc_1\lambda_{\gamma,m}^{\frac{1+\gamma}{2\gamma}}
	.
\]
Choosing $R = R_0$ in \eqref{eq:ucEigen}, the inequality \eqref{eq:boundUCErr} is still valid, and we have
\[
	\abs{n}
	\lesssim
	\abs{n}R_\omega^{1+\gamma}
	\leq
	\abs{n}R_0^{1+\gamma}
	=
	\frac{pc_1}{c_2}\,\lambda_{\gamma,m}^{\frac{1+\gamma}{2\gamma}}
	\leq
	\frac{pc_1}{c_2}\,\lambda_{\gamma,0}^{\frac{1+\gamma}{2\gamma}}\ \biggl( \frac{\lambda_{\gamma,n,m}}{\lambda_{\gamma,0}}
		\biggr)^{\frac{1+\gamma}{2\gamma}}
	\lesssim
	\lambda_{\gamma,n,m}^{\frac{1+\gamma}2}
	,
\]
since $\gamma\geq1$, so that again
\[
	C_{R_0}
	\lesssim
	\lambda_{\gamma,n,m}^{\frac{1+\gamma}{2}}
	.
\]
At the same time, $R_0$ satisfies
\[
	R_0^{1+\gamma}
	\leq
	\abs{n}R_0^{1+\gamma}
	\lesssim
	\lambda_{\gamma,n,m}^{\frac{1+\gamma}2}
	,
\]
so that
\[
	\log\biggl( \frac{K}{\theta_{R_0}} \biggr)
	\lesssim
	\log(1+\lambda_{\gamma,n,m})
	.
\]
We therefore conclude that also in this case \eqref{eq:ucEigen} is consistent with the claim for $\psi_{\gamma,n,m}$, again
with a suitable adaptation of the constant $K$. This ends the proof of Theorem~\ref{thm:ucsingleigenfunction}.
\end{proof}

Let us finally turn to linear combinations of eigenfunctions of the operator $\Delta_{\gamma}$.

\begin{proof}[Proof of Theorem~\ref{thm:ucgeneraleigenfunctions}]
	Let $R_{\omega}$ be as in the proof of Theorem~\ref{thm:ucsingleigenfunction} above, and fix some
	$f\in E_{\lambda}(-\Delta_{\gamma})$. It then follows from Lemmas~\ref{lem:regeigen} and~\ref{lem:loceigen} that $f$ satisfies
	\eqref{eq:smoothingAnisotropic} and \eqref{eq:genDecay} with instead
	\[
		D
		=
		c\lambda^{c_\gamma}\euler^{c_1\lambda^{\frac{1+\gamma}{2\gamma}}}\norm{f}_{L^2(\var)}
		,\quad
		c_x
		=
		c\sqrt{\lambda}
		,\quad
		\mu_x
		=
		\frac{\gamma}{1+\gamma}
		,\quad
		c_y
		=
		c\lambda^{\frac{1+\gamma}2}
		,\quad
		\mu_y
		=
		0
	\]
	and
	\[
		c_0
		=
		c_2
		,\quad
		\nu
		=
		\frac{1}{1+\gamma}
		.
	\]
	We then conclude in the same way as in the proof of Theorem~\ref{thm:ucsingleigenfunction} that for every $R \geq R_\omega$ we
	have
	\[
		\norm{f}_{L^2(\RR^d \times \TT^d)}^2
		\leq
		\biggl( \frac K{\theta_R} \biggr)^{KC_R} \norm{f}_{L^2(\omega)}^2 +
			c^2\lambda^{2c_\gamma}\euler^{2c_1\lambda^{\frac{1+\gamma}{2\gamma}}}\euler^{-2c_2R^{1+\gamma}}
			\norm{f}_{L^2(\RR^d \times \TT^d)}^2
		,
	\]
	with $\theta_R$ given by \eqref{eq:ratioproof} and
	\[
		C_R
		=
		1 + R^{1+\gamma}  + \lambda^{\frac{1+\gamma}2}R^{1+\gamma} + \lambda^{\frac{1+\gamma}2}
		,
	\]
	and where $K > 0$ depends only on $\gamma$ and the dimension $d$. Now, we choose $R$ such that
	\[
		c^2\lambda^{2c_\gamma}\euler^{2c_1\lambda^{\frac{1+\gamma}{2\gamma}}}\euler^{-2c_2R^{1+\gamma}}
		\le
		\frac12
		.
	\]
	We obviously have the bound
	\[
		R^{1+\gamma}
		\lesssim
		\lambda^{\frac{1+\gamma}{2\gamma}}
		,
	\]
	so that
	\[
		C_R
		\lesssim
		\lambda^{(\frac12+\frac1{2\gamma})(1+\gamma)}
	\]
	and
	\[
		\log(K/\theta_R)
		\lesssim
		\log(1+\lambda)
		.
	\]
	This proves	the claim.
\end{proof}

\section{The anisotropic Shubin operators}\label{sec:Shubin}

In this last section, we apply the general framework from Section~\ref{sec:uniqueCont} to the evolution equations associated to
the anisotropic Shubin operators, given by
\[
	H_{k,m} = (-\Delta)^m + \vert x\vert^{2k},\quad x\in\mathbb R^d,
\]
where $k,m\geq1$ are positive integers. Recall (e.g. from \cite[Section 2]{Alp20b}) that the operators $H_{k,m}$ equipped with the
domains
\[
	D(H_{k,m}) = \big\{g\in L^2(\mathbb R^d) : H_{k,m}g\in L^2(\mathbb R^d)\big\},
\]
generate strongly continuous semigroups $(e^{-tH_{k,m}})_{t\geq0}$ on $L^2(\mathbb R^n)$. It is also known from the work
\cite{Alp20b} that the associated evolution operators $e^{-tH_{k,m}}$ enjoy very strong smoothing and localizing properties in
Gelfand--Shilov spaces, for precise estimates see \eqref{eq:smoothshubin} and \eqref{eq:locshubin} below, which allow us to derive
quantitative unique continuation estimates for them. As a byproduct, we obtain positive cost-uniform approximate
null-controllability results for the evolution equations associated to the operators $H_{k,m}$, which are qualitatively new in the
particular case of $k = m = 1$.

\subsection{Unique continuation estimates}

The aforementioned smoothing and localizing result for the evolution operators generated by $H_{k,m}$ is precisely
\cite[Theorem 2.3]{Alp20b}. It states that there exist positive constants $c,c_1>0$ and $t_0\in(0,1)$ such that for all
$\alpha\in\mathbb N^d$, $t\in(0,t_0)$, and $g\in L^2(\mathbb R^d)$,
\begin{equation}\label{eq:smoothshubin}
	\big\Vert\partial^{\alpha}_x(e^{-tH_{k,m}}g)\big\Vert_{L^2(\mathbb R^d)}
	\le
	\frac{c^{1+\vert\alpha\vert}}{t^{\frac{k\vert\alpha\vert}{k+m}}}\,(\alpha!)^{\frac k{k+m}}\,\Vert g\Vert_{L^2(\mathbb R^d)},
\end{equation}
and
\begin{equation}\label{eq:locshubin}
	\big\Vert e^{c_1t\vert x\vert^{1+\frac km}}(e^{-tH_{k,m}}g)\big\Vert_{L^2(\mathbb R^d)}
	\le
	c\Vert g\Vert_{L^2(\mathbb R^d)}.
\end{equation}
These enable us to obtain the following quantitative unique continuation estimates.

\begin{thm}\label{thm:ucShubinDecay}
	Let $\omega\subset\mathbb R^d$ be measurable with positive measure, and let $\scrit_{\omega}\in[0,+\infty)$ be the radius
	defined by
	\begin{equation}\label{eq:distancezero}
		\scrit_{\omega} = \inf\bigl\{\scrit>0 : \vert\omega\cap(-R,R)^d\vert>0\bigr\}.
	\end{equation}
	There exist some positive constants $c,c_1,K>0$ and $t_0\in(0,1)$ such that for all $t\in(0,t_0)$, $\scrit>\scrit_{\omega}$, and
	$g\in L^2(\mathbb R^d)$,
	\[
		\big\Vert e^{-tH_{k,m}}g\big\Vert^2_{L^2(\mathbb R^d)}
		\leq
		\biggl( \frac K{\theta_{\scrit}} \biggr)^{K(1+t\scrit^{1+\frac km}+\scrit^{1+\frac km}/t^{\frac km})}
			\big\Vert e^{-tH_{k,m}}g\big\Vert^2_{L^2(\omega)} + ce^{-c_1t\scrit^{1+\frac km}}\Vert g\Vert^2_{L^2(\mathbb R^d)},
	\]
	where the ratio $\theta_{\scrit}\in(0,1]$ is given by
	\begin{equation}\label{eq:ratio}
		\theta_{\scrit} = \frac{\vert\omega\cap(-R,R)^d\vert}{(2R)^d}.
	\end{equation}
\end{thm}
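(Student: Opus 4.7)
The plan is to apply the abstract framework from Section~\ref{sec:uniqueCont} in its $\mathbb{R}^d$-variant (see Remark~\ref{rk:ucDecay}(2)) following exactly the template of Example~\ref{ex:ucDecay}, with $f = e^{-tH_{k,m}}g$ playing the role of the function under consideration. First, by the smoothing estimate \eqref{eq:smoothshubin} together with $\alpha! \le |\alpha|!$, the function $f$ satisfies the $\mathbb{R}^d$-analogue of \eqref{eq:genSmoothing} with $D = c\|g\|_{L^2(\mathbb{R}^d)}$, $c_x = c/t^{k/(k+m)}$, and $\mu = k/(k+m) \in [1/2,1)$. Second, by the localization estimate \eqref{eq:locshubin}, the same function satisfies the $\mathbb{R}^d$-analogue of \eqref{eq:genDecay} with the same $D$, $c_0 = c_1 t$, and $\nu = m/(k+m)$, so that $1/\nu = 1+k/m$.

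Next, I pick a radius $R > R_\omega$; by the very definition \eqref{eq:distancezero} of $R_\omega$, the ratio $\theta_R$ from \eqref{eq:ratio} is then positive. I cover the hypercube $\Gamma = (-R,R)^d$ by itself (with overlap $\kappa = 1$), which is the affine image of $(0,1)^d$ with scaling parameters of order $R$ in each coordinate. The parameter $|\ell B|_1$ entering Corollary~\ref{cor:goodAndBad} is then of order $R/t^{k/(k+m)}$. Applying Corollary~\ref{cor:goodAndBad} with the particular choice $\eps = e^{-2c_1 tR^{1+k/m}}$ in exactly the same fashion as in Example~\ref{ex:ucDecay}, and using the $\mathbb{R}^d$-analogue of the splitting \eqref{eq:remainder} to control the portion of $\|f\|_{L^2(\mathbb{R}^d)}^2$ outside $\Gamma$, yields
$$\|f\|_{L^2(\mathbb{R}^d)}^2 \le \Bigl(\frac{K}{\theta_R}\Bigr)^{KC_{\eps,\mu}}\|f\|_{L^2(\omega)}^2 + 2e^{-2c_1 tR^{1+k/m}}D^2,$$
with
$$C_{\eps,\mu} = 1 + c_1 tR^{1+k/m} + \bigl(cR/t^{k/(k+m)}\bigr)^{1/(1-\mu)}.$$

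The only remaining task is bookkeeping. Since $\mu = k/(k+m)$, one has $1/(1-\mu) = (k+m)/m$, and a direct computation gives $\bigl(cR/t^{k/(k+m)}\bigr)^{(k+m)/m} = c^{(k+m)/m}\,R^{1+k/m}/t^{k/m}$. Absorbing multiplicative constants into $K$ thus reduces $C_{\eps,\mu}$ to the claimed form $1+tR^{1+k/m}+R^{1+k/m}/t^{k/m}$, while the remainder $2c^2 e^{-2c_1 tR^{1+k/m}}\|g\|_{L^2(\mathbb{R}^d)}^2$ is dominated by $ce^{-c_1 tR^{1+k/m}}\|g\|_{L^2(\mathbb{R}^d)}^2$ after a harmless adjustment of the constants $c$ and $c_1$. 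No step poses a genuine obstacle; the proof is essentially a direct instantiation of the general scheme, and the only point requiring minor care is verifying that the exponent $1/(1-\mu) = (k+m)/m$ arising from the smoothing bound correctly produces the scaling $R^{1+k/m}/t^{k/m}$ appearing in the statement and matches (up to constants) the $tR^{1+k/m}$ term coming from the decay bound.
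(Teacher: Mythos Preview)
Your proposal is correct and follows essentially the same route as the paper's own proof: identify $D$, $c_x$, $\mu$, $c_0$, and $\nu$ from the smoothing and localization estimates \eqref{eq:smoothshubin}--\eqref{eq:locshubin}, then invoke the $\RR^d$-variant of Example~\ref{ex:ucDecay} via Remark~\ref{rk:ucDecay}(2) and relabel constants. The only slip is the parenthetical claim that $\mu = k/(k+m) \in [1/2,1)$; in fact $\mu$ ranges over $(0,1)$ as $k,m\geq 1$ vary (e.g.\ $k=1$, $m=2$ gives $\mu=1/3$), but this has no bearing on the argument since only $\mu<1$ is needed.
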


\begin{proof}
	In line of Remark~\ref{rk:ucDecay}\,(2), we infer from \eqref{eq:smoothshubin} and \eqref{eq:locshubin} that each
	$f = e^{-tH_{k,m}}g$ satisfies the $\RR^d$ variants of \eqref{eq:genSmoothing} and \eqref{eq:genDecay} with
	\[
		D = c\norm{g}_{L^2(\RR^d)}
		,\quad
		c_x = \frac{c}{t^{\frac{k}{k+m}}}
		,\quad
		\mu = \frac{k}{k+m}
	\]
	and
	\[
		c_0 = c_1t
		,\quad
		\nu = \frac{m}{k+m}
		.
	\]
	The claim with $c$ and $c_1$ replaced by $2c^2$ and $2c_1$, respectively, therefore follows just as in Example~\ref{ex:ucDecay},
	and it only remains to relabel the constants accordingly.
\end{proof}

\subsection{Approximate null-controllability}

The quantitative estimates stated in Theorem~\ref{thm:ucShubinDecay} have consequences on the cost-uniform approximate
null-controllability properties of the evolution equations associated with the operators $H_{k,m}$, given by
\begin{equation}\label{eq:Shubin}\tag{$E_{k,m}$}
\begin{cases}
	\partial_tf(t,x) + H_{k,m}f(t,x) = h(t,x)\bmone_{\omega}(x), & t>0,\ x\in\RR^d,\\
	f(0,\cdot) = f_0\in L^2(\RR^d),
\end{cases}
\end{equation}
where $\omega\subset\mathbb R^d$ is a measurable set with positive measure and $h\in L^2((0,T)\times\omega)$ is a control. The
notions of exact and cost-uniform approximate null-controllability for the equation \eqref{eq:contbg} from
Definitions~\ref{dfn:exactcontrol} and~\ref{dfn:approxunifcost} and their interpretation in terms of (weak) observability
estimates in \eqref{eq:exactobs} and Proposition~\ref{prop:dualityapproxcontuc}, respectively, carry over to the equation
\eqref{eq:Shubin} verbatim. With this in mind, a straightforward consequence of Theorem~\ref{thm:ucShubinDecay} is the following
null-controllability result.

\begin{cor}\label{cor:exactcontshubin}
	The evolution equation \eqref{eq:Shubin} is cost-uniformly approximately null-controllable from every measurable set
	$\omega\subset\mathbb R^d$ with positive measure and in every positive time $T>0$.
\end{cor}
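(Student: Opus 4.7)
The plan is to deduce cost-uniform approximate null-controllability from a weak observability estimate of the form
\[
	\big\Vert e^{-TH_{k,m}}g\big\Vert^2_{L^2(\RR^d)}
	\le
	\frac{C_{\varepsilon,\omega,T}}{T}\int_0^T\big\Vert e^{-tH_{k,m}}g\big\Vert^2_{L^2(\omega)}\,\mathrm dt
		+ \varepsilon\Vert g\Vert^2_{L^2(\RR^d)}
\]
for all $\varepsilon\in(0,1)$, $g\in L^2(\RR^d)$, invoking the analogue of Proposition~\ref{prop:dualityapproxcontuc} which, as the excerpt notes, carries over verbatim to the equation~\eqref{eq:Shubin}. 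The strategy mirrors the one sketched in the remark following Theorem~\ref{thm:cuancbg}: one integrates the pointwise-in-time unique continuation estimate from Theorem~\ref{thm:ucShubinDecay}, using the contraction property $\Vert e^{-TH_{k,m}}g\Vert_{L^2(\RR^d)}\le \Vert e^{-tH_{k,m}}g\Vert_{L^2(\RR^d)}$ valid for $0<t\le T$ to transfer the norm from time $T$ to intermediate times.

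First, I would fix $\varepsilon>0$ and $T>0$, set $T' = \min(T,t_0)$ with $t_0\in(0,1)$ the time from Theorem~\ref{thm:ucShubinDecay}, and restrict attention to the subinterval $[T'/2,T']\subset (0,t_0)$, on which the unique continuation estimate is directly applicable. Next, I would choose $\scrit = \scrit(\varepsilon,T,\omega)>\scrit_\omega$ large enough so that the decay factor satisfies $ce^{-c_1(T'/2)\scrit^{1+k/m}}\le\varepsilon$; this is always possible for any $\varepsilon>0$ since $\scrit^{1+k/m}$ can be made arbitrarily large. With this choice, for every $t\in[T'/2,T']$, Theorem~\ref{thm:ucShubinDecay} combined with contraction yields
\[
	\big\Vert e^{-TH_{k,m}}g\big\Vert^2_{L^2(\RR^d)}
	\le
	\biggl(\frac{K}{\theta_\scrit}\biggr)^{K(1+t\scrit^{1+k/m}+\scrit^{1+k/m}/t^{k/m})}
		\big\Vert e^{-tH_{k,m}}g\big\Vert^2_{L^2(\omega)} + \varepsilon\Vert g\Vert^2_{L^2(\RR^d)}.
\]
Because $t$ ranges over the compact subinterval $[T'/2,T']$ bounded away from zero, and because $\scrit$ is now a fixed number depending only on $\varepsilon$, $T$, $\omega$, $k$, $m$, and $d$, the multiplicative constant in front of the local norm is bounded above by some $\widetilde C_{\varepsilon,\omega,T}$ uniform in $t$.

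Finally, I would integrate the resulting inequality over $t\in[T'/2,T']$, use that the time interval has positive length $T'/2$, and enlarge the range of integration to $(0,T)$, which gives
\[
	\big\Vert e^{-TH_{k,m}}g\big\Vert^2_{L^2(\RR^d)}
	\le
	\frac{2\widetilde C_{\varepsilon,\omega,T}}{T'}\int_0^T\big\Vert e^{-tH_{k,m}}g\big\Vert^2_{L^2(\omega)}\,\mathrm dt
		+ \varepsilon\Vert g\Vert^2_{L^2(\RR^d)}.
\]
Absorbing the factor $2T/T'$ into $C_{\varepsilon,\omega,T}$ produces the desired weak observability inequality, which concludes the proof via the duality statement. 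The only delicate point is the joint calibration of $\scrit$ and the time subinterval: one needs $\scrit$ large enough to dominate $\log(c/\varepsilon)$ while simultaneously keeping $t$ bounded away from $0$ so that the Gevrey-type blow-up $\scrit^{1+k/m}/t^{k/m}$ remains finite. Both are handled simultaneously by the fixed choice of $[T'/2,T']$, so no genuine obstacle arises.
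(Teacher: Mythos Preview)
Your proposal is correct and follows precisely the route the paper intends: the paper states Corollary~\ref{cor:exactcontshubin} as a straightforward consequence of Theorem~\ref{thm:ucShubinDecay} without spelling out a proof, and your argument---choosing $R$ large enough to make the remainder $\le\varepsilon$, restricting to a compact time window $[T'/2,T']\subset(0,t_0)$, using the contraction property of the semigroup, and integrating before invoking the analogue of Proposition~\ref{prop:dualityapproxcontuc}---is exactly the standard way to fill in these details, mirroring the reasoning sketched for Theorem~\ref{thm:cuancbg}.
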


\begin{rk}
	The exact null-controllability properties of the equation \eqref{eq:Shubin} (and its fractional counterparts) have been widely
	studied, see, e.g., \cite{BeauchardJPS-21,BeauchardPS-18,DickeSV-23,MartinPS-22,M} for the specific case $k=m=1$, and
	\cite{AlphonseS,DickeS-22,DickeSV, M2,Martin-22} where the general case $k,m\geq1$ is considered. In this regard,
	Corollary~\ref{cor:exactcontshubin} does not give a qualitatively new result when $(k,m) \neq (1,1)$. In fact, we know from
	\cite[Theorem 2.5]{M2} that when $(k,m)\ne(1,1)$, then the evolution equation \eqref{eq:Shubin} is even exactly
	null-controllable in every positive time $T>0$ and from every measurable control support $\omega\subset\mathbb R^d$ with
	positive measure. In the case $k=m=1$, however, the exact null-controllability properties of the harmonic heat equation
	$($\hyperref[eq:Shubin]{$E_{1,1}$}$)$ are not yet fully understood. On the one hand, it is known from \cite[Theorem 1.10]{M}
	(see also \cite[Proposition 5.1]{DM}) that the equation $($\hyperref[eq:Shubin]{$E_{1,1}$}$)$ is not exactly null-controllable
	in any positive time whenever the control support $\omega \subset \RR^d$ is contained in a half space. In fact, it can be
	readily checked that a half space satisfies a geometric condition of the form
	\begin{equation}\label{eq:densitythick}
		\forall x\in\RR^d,\quad\vert\omega\cap B(x,\rho(x))\vert \geq \theta\vert B(x,\rho(x))\vert,
	\end{equation}
	with a function $\rho:\mathbb R^d\rightarrow\mathbb R_+$ taking the form
	\[
		\rho(x) = L \sprod{x},\quad x \in \RR^d,
	\]
	with some $L > 0$. This raises the question whether local scales $\rho$ can be allowed that exhibit an arbitrary sublinear
	growth. This question has not been answered yet, but a first step in this direction is made by the result
	\cite[Corollary 2.13]{AlphonseS} stating that when the control support $\omega\subset\mathbb R^d$ satisfies the geometric
	condition \eqref{eq:densitythick} with a function $\rho:\mathbb R^d\rightarrow\mathbb R_+$ satisfying
	\[
		\rho(x)
		\leq\frac{L\sprod{x}}{(g\circ g)^\alpha(\abs{x})g(\abs{x})}
		\quad\text{ where }\quad
		g(r) = \log(e+r),\quad r \geq 0,
	\]
	with some $L > 0$ and $\alpha > 2$, then the equation $($\hyperref[eq:Shubin]{$E_{1,1}$}$)$ is exactly null-controllable from
	the control support $\omega$ in every positive time $T>0$. By contrast, Corollary~\ref{cor:exactcontshubin} shows that the
	cost-uniform approximate null-controllability properties of the equation $($\hyperref[eq:Shubin]{$E_{1,1}$}$)$ are far more
	simple since they hold in every positive time $T>0$ and from every measurable control support $\omega\subset\mathbb R^d$ with
	positive measure.
\end{rk}

\subsection{Other models}

The results presented in this section for the anisotropic Shubin semigroups can be generalized to any other strongly continuous
semigroup $(e^{-tA})_{t\geq0}$ acting on $L^2(\mathbb R^d)$ and also satisfying Gelfand--Shilov smoothing and localizing
properties of the form \eqref{eq:smoothshubin} and \eqref{eq:locshubin}. Rather than writing a very general result, we just
present an example different from the anisotropic Shubin operator.

\begin{ex}\label{ex:anharmonic} Let us consider the anharmonic oscillator
	\[
		H = -\Delta + i\vert x\vert^2,\quad x\in\mathbb R^d.
	\]
	The operator $H$ equipped with the domain
	\[
		D(H) = \big\{g\in L^2(\mathbb R^d) : Hg\in L^2(\mathbb R^d)\big\}
	\]
	is known to generate a strongly continuous semigroup $(e^{-tH})_{t\geq0}$ on $L^2(\mathbb R^d)$. Moreover, it follows from the
	work \cite[Theorem 2.6]{AB2} that there exist positive constants $c,c_1>0$ and $t_0\in(0,1)$ such that for all
	$\alpha\in\mathbb N^d$, $t\in(0,t_0)$, and $g\in L^2(\mathbb R^d)$,
	\[
		\big\Vert\partial^{\alpha}_x(e^{-tH}g)\big\Vert_{L^2(\mathbb R^d)}
		\le
		\frac{c^{1+\vert\alpha\vert}}{t^{\frac{\vert\alpha\vert}2}}\,\sqrt{\alpha!}\,\Vert g\Vert_{L^2(\mathbb R^d)},
	\]
	and
	\[
		\big\Vert e^{c_1t^3\vert x\vert^2}(e^{-tH}g)\big\Vert_{L^2(\mathbb R^d)}
		\le
		c\Vert g\Vert_{L^2(\mathbb R^d)}.
	\]
	We thus deduce exactly as in the proof of Theorem~\ref{thm:ucShubinDecay}, but with
	\[
		D = c\norm{g}_{L^2(\RR^d)}
		,\quad
		c_x = \frac{c}{\sqrt{t}}
		,\quad
		\mu = \frac{1}{2}
	\]
	and
	\[
		c_0 = c_1t^3
		,\quad
		\nu = \frac{1}{2}
		,
	\]
	that for every measurable set $\omega\subset\mathbb R^d$ with positive measure, there exists some positive constant $K>0$ such
	that for all $t\in(0,t_0)$, $\scrit>\scrit_{\omega}$, and $g\in L^2(\mathbb R^d)$,
	\[
		\big\Vert e^{-tH}g\big\Vert^2_{L^2(\mathbb R^d)}
		\le
		\bigg(\frac K{\theta_{\scrit}}\bigg)^{K(1+t^3\scrit^2+\scrit^2/t)}\big\Vert e^{-tH}g\big\Vert^2_{L^2(\omega)} +
			2c^2e^{-2c_1t^3\scrit^2}\Vert g\Vert^2_{L^2(\mathbb R^d)},
	\]
	where the radius $\scrit_{\omega}\in[0,+\infty)$ is as in \eqref{eq:distancezero} and the ratio $\theta_{\scrit}\in(0,1]$ is as
	in \eqref{eq:ratio}.
\end{ex}

\appendix

\section{A local estimate with anisotropic smoothing}\label{app:local}

We have the following variant of Corollary~\ref{cor:localEstimatewChangofVariables}.

\begin{prop}\label{prop:localEstimatewChangofVariablesAnisotropic}
	Let $Q = \Psi((0,1)^{2d})$, where the mapping $\Psi \colon \RR^{2d} \to \RR^{2d}$ is given by $\Psi(z) = z_0 + (\ell_x,\ell_y)z$,
	$z \in \RR^{2d}$, with some $z_0 \in \RR^{2d}$ and $\ell_x,\ell_y \in (0,\infty)^d$. Moreover, suppose that $f \in C^\infty(Q)$
	satisfies
	\[
		\forall\alpha,\beta \in \NN^d
		,\quad
		\norm{\partial_x^\alpha \partial_y^\beta f}_{L^2(Q)}
		\leq
		A B_x^{\alpha} B_y^\beta (\abs{\alpha}!)^{\mu_x} (\abs{\beta}!)^{\mu_y}  \norm{f}_{L^2(Q)}
	\]
	with some $A > 0$, $B_x,B_y \in (0,\infty)^d$, and $\mu_x,\mu_y \in [0,1)$. Then, there is a constant $K = K(d,\mu_x,\mu_y) > 0$,
	depending only on $\mu_x$, $\mu_y$, and the dimension $d$, such that for every measurable set $E \subset Q$ of positive measure
	we have
	\[
		\norm{f}_{L^2(Q)}^2
		\leq
		\biggl( \frac{K\vert Q\vert}{\vert E\vert} \biggr)^{KC}
			\norm{f}_{L^2(E)}^2
	\]
	with
	\[
		C
		=
		1 + \log(A) + (\abs{\ell_x B_x})^{\frac{1}{1-\mu_x}} + (\abs{\ell_y B_y})^{\frac{1}{1-\mu_y}}
		.
	\]
\end{prop}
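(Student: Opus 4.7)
The plan is to follow the template of the proof of Corollary~\ref{cor:localEstimatewChangofVariables}, but with the indices $\mu_x$ and $\mu_y$ tracked separately throughout the argument. First I would reduce to the unit hypercube $\cQ = (0,1)^{2d}$ by setting
\[
	g(z)
	:=
	\frac{\sqrt{\vert\cQ\vert}}{\norm{f\circ\Psi}_{L^2(\cQ)}}\,f(z_0 + (\ell_x,\ell_y)z)
	,\quad
	z = (x,y) \in \cQ
	,
\]
which obviously satisfies $\norm{g}_{L^\infty(\cQ)} \geq 1$ together with the anisotropic $L^2$ derivative bound
\[
	\norm{\partial_x^\alpha \partial_y^\beta g}_{L^2(\cQ)}
	\leq
	(A\sqrt{\vert\cQ\vert})\,(\ell_x B_x)^\alpha (\ell_y B_y)^\beta (\abs{\alpha}!)^{\mu_x}(\abs{\beta}!)^{\mu_y}
	.
\]
As in the proof of Corollary~\ref{cor:localEstimatewChangofVariables}, the Sobolev embedding $W^{2d,2}(\cQ) \hookrightarrow L^\infty(\cQ)$ combined with the standard inequality $\abs{\alpha+\alpha'}! \leq 2^{\abs{\alpha}+\abs{\alpha'}}\abs{\alpha}!\abs{\alpha'}!$ applied coordinate-wise yields the $L^\infty$ bound
\[
	\norm{\partial_x^\alpha\partial_y^\beta g}_{L^\infty(\cQ)}
	\leq
	\widetilde{A}\,(2\ell_x B_x)^\alpha (2\ell_y B_y)^\beta (\abs{\alpha}!)^{\mu_x}(\abs{\beta}!)^{\mu_y}
\]
with $\widetilde{A}$ of the form $A\sqrt{\abs{\cQ}}\,(\text{poly in }\abs{\ell_x B_x}_1,\abs{\ell_y B_y}_1)$.

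The decisive step is then a version of Proposition~\ref{prop:localEstimate} for anisotropic Gevrey classes, which unlike the isotropic statement refuses to combine $\mu_x$ and $\mu_y$ into a single parameter. Since $\mu_x,\mu_y<1$, I would use the classical analytic approach of Kovrijkine (as indicated by the remark following Corollary~\ref{cor:localEstimatewChangofVariables} and the purpose of this appendix): for a convex open set $\cQ \subset \RR^{2d}$ and a measurable subset $\cE \subset \cQ$ of positive measure, one locates a ``good point'' $z^* \in \cQ$ at which $\abs{g(z^*)} \geq \norm{g}_{L^\infty(\cQ)}/K$ using only Chebyshev-type considerations on the sublevel sets of $g$, and then expands $g$ into a Taylor series separately in the $x$ and $y$ variables around this point. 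The Gevrey bounds with indices $\mu_x,\mu_y < 1$ give entire extensions whose polydiscs of control have radii $\rho_x \sim (\abs{\ell_x B_x}_1)^{-1/(1-\mu_x)}$ and $\rho_y \sim (\abs{\ell_y B_y}_1)^{-1/(1-\mu_y)}$ respectively; compensating by rescaling along each factor separately, one obtains a polydisc in which $g$ is essentially controlled by a polynomial whose degree in each coordinate is tied to the corresponding scale. A Remez/Bernstein-Walsh inequality applied in each set of variables then transfers the value at $z^*$ down to the subset $\cE$.

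The main obstacle, and the reason the result does not drop out by applying the isotropic Proposition~\ref{prop:localEstimate} with $\mu = \max(\mu_x,\mu_y)$, is exactly this anisotropic transfer step: we must perform the Taylor/Remez reasoning in the $x$ and $y$ directions at different scales and make sure that when reassembling the two factor estimates into a single global bound, the exponents $1/(1-\mu_x)$ and $1/(1-\mu_y)$ appear additively rather than via a common, dominating exponent. Once this is carried out, applying the resulting anisotropic local inequality to $g$ with the subset $\cE = \Psi^{-1}(E)$, and absorbing the logarithms of $\widetilde{A}/A$ and of $(1+\abs{\ell_x B_x}_1+\abs{\ell_y B_y}_1)$ into the two power terms $(\abs{\ell_x B_x}_1)^{1/(1-\mu_x)}$ and $(\abs{\ell_y B_y}_1)^{1/(1-\mu_y)}$ via Young-type inequalities (as in the isotropic case), gives the claimed bound with the stated constant $C$ after a final relabelling of $K = K(d,\mu_x,\mu_y)$.
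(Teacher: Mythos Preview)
Your proposal follows the template of Corollary~\ref{cor:localEstimatewChangofVariables} (Sobolev embedding to $L^\infty$, then an analytic Kovrijkine argument), but the paper takes a different and shorter route. It stays entirely in $L^2$: after the change of variables $g=f\circ\Psi$, it does \emph{not} pass to $L^\infty$ bounds. Instead it feeds the $L^2$ derivative bounds directly into Proposition~\ref{prop:EgidiS} (a black-box Logvinenko--Sereda type result from \cite{EgidiS-21}), which requires only that
\[
h=\sum_{m\geq0}\sqrt{C(m)}\,\frac{(10d)^m}{m!}<\infty,
\qquad
C(m)\ \text{controlling}\ \sum_{|(\alpha,\beta)|=m}\frac{\norm{\partial_x^\alpha\partial_y^\beta g}_{L^2}^2}{\alpha!\beta!}.
\]
The anisotropy, which you correctly flag as the delicate point, is then handled in one line: the double sum defining $\sqrt{C(m)}$ is a Cauchy product, so $h$ \emph{factors} as
\[
h = A\biggl(\sum_{k\geq0}\frac{|10d\,\ell_xB_x|^k}{(k!)^{1-\mu_x}}\biggr)\biggl(\sum_{l\geq0}\frac{|10d\,\ell_yB_y|^l}{(l!)^{1-\mu_y}}\biggr),
\]
and the Mittag-Leffler asymptotics give $\log h \lesssim \log A + |\ell_xB_x|^{1/(1-\mu_x)} + |\ell_yB_y|^{1/(1-\mu_y)}$ with the two exponents appearing additively for free.

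Your outline is plausible in principle, but the step you yourself call ``the main obstacle'' --- running the Taylor/Remez argument at two different scales and recombining so that the exponents add rather than dominate --- is not actually carried out; you write ``once this is carried out'' and stop. That is precisely the nontrivial content here, and it is what the paper's Cauchy-product trick bypasses entirely. If you want to push your approach through, you would need an anisotropic Remez/Nazarov--Tur\'an inequality for polynomials of bidegree $(N_x,N_y)$ on a product domain, which is doable but heavier than what the paper does.
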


For the proof of Proposition~\ref{prop:localEstimatewChangofVariablesAnisotropic}, we rely on the following particular case of
\cite[Proposition~31]{EgidiS-21} tailored to our situation.

\begin{prop}\label{prop:EgidiS}
	Suppose that $g \in W^{\infty,2}((0,1)^d) = \bigcap_{k\in\NN} W^{k,2}((0,1)^d)$ satisfies
	\[
		\forall m \in \NN
		,\quad
		\sum_{\abs{\alpha}=m} \frac{1}{\alpha!} \norm{ \partial^\alpha g }_{L^2((0,1)^d)}^2
		\leq
		\frac{C(m)}{m!} \norm{g}_{L^2((0,1)^d)}^2
	\]
	with constants $C(m)> 0$ such that
	\[
		h
		:=
		\sum_{m\in\NN} \sqrt{C(m)} \frac{(10d)^m}{m!}
		<
		\infty
		.
	\]
	Then, there is a constant $K = K(d) > 0$, only depending on the dimension $d$, such that for every measurable set
	$\cE \subset (0,1)^d$ of positive measure we have
	\[
		\norm{g}_{L^2((0,1)^d)}^2
		\leq
		\biggl( \frac{K}{\vert\cE\vert} \biggr)^{K(1+\log h)} \norm{g}_{L^2(\cE)}^2
		.
	\]
\end{prop}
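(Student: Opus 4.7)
The plan is to reduce the claim to Proposition~\ref{prop:EgidiS} via the affine change of variables $\Psi$, in the same spirit as the proof of Corollary~\ref{cor:localEstimatewChangofVariables}, but with Proposition~\ref{prop:EgidiS} replacing Proposition~\ref{prop:localEstimate}. Concretely, set $\cQ = (0,1)^{2d}$, $\cE := \Psi^{-1}(E) \subset \cQ$, and define $g \in C^\infty(\cQ)$ by $g(z) = f(\Psi(z))$. Since $\Psi$ is affine with diagonal scaling vector $(\ell_x,\ell_y) \in (0,\infty)^{2d}$, the chain rule gives
\[
	\partial_x^\alpha \partial_y^\beta g(z)
	= \ell_x^\alpha \ell_y^\beta\, (\partial_x^\alpha \partial_y^\beta f)(\Psi(z))
	,
\]
and a standard change of variables transports $L^2$-norms between $Q$ and $\cQ$ in a way that preserves all relevant ratios, including $\abs{Q}/\abs{E} = \abs{\cQ}/\abs{\cE}$. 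Consequently, the hypothesis on $f$ transfers into
\[
	\norm{ \partial_x^\alpha \partial_y^\beta g }_{L^2(\cQ)}
	\leq
	A\, (\ell_x B_x)^\alpha (\ell_y B_y)^\beta (\abs{\alpha}!)^{\mu_x} (\abs{\beta}!)^{\mu_y}\, \norm{g}_{L^2(\cQ)}
	.
\]

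The main technical step will then be to verify the convergence condition of Proposition~\ref{prop:EgidiS} applied in dimension $2d$. Writing $\widetilde B_x = \ell_x B_x$ and $\widetilde B_y = \ell_y B_y$, the multinomial identity $\sum_{\abs{\alpha}=k}(k!/\alpha!)\widetilde B_x^{2\alpha} = \abs{\widetilde B_x}_2^{2k}$ yields
\[
	\sum_{\abs{\alpha}+\abs{\beta}=m} \frac{1}{\alpha!\,\beta!}
		\norm{ \partial_x^\alpha \partial_y^\beta g }_{L^2(\cQ)}^2
	\leq
	A^2 \norm{g}_{L^2(\cQ)}^2 \sum_{k+l=m} (k!)^{2\mu_x-1} (l!)^{2\mu_y-1}\,
		\abs{\widetilde B_x}_2^{2k} \abs{\widetilde B_y}_2^{2l}
	,
\]
so that the constants $C(m)$ of Proposition~\ref{prop:EgidiS} can be read off from the above right-hand side. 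Using $\sqrt{\sum a_i} \leq \sum \sqrt{a_i}$ and $\sqrt{m!} \leq 2^{m/2}\sqrt{k!\,l!}$ for $k+l=m$, the series $h = \sum_m \sqrt{C(m)}(20d)^m/m!$ factorizes after applying $1/m! \leq 1/(k!\,l!)$ into a product
\[
	h
	\leq
	A\cdot S\bigl(c_d\abs{\widetilde B_x}_2,\mu_x\bigr)\cdot S\bigl(c_d\abs{\widetilde B_y}_2,\mu_y\bigr)
	,\qquad
	S(a,\mu) := \sum_{k=0}^\infty \frac{a^k}{(k!)^{1-\mu}}
	,
\]
with some constant $c_d$ depending only on $d$.

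The final step rests on the classical Gevrey-type estimate $S(a,\mu) \leq C_\mu \exp(C_\mu a^{1/(1-\mu)})$ for $\mu \in [0,1)$, which follows from Stirling's formula applied to the term of maximal size $k \sim a^{1/(1-\mu)}$. Plugging this in and taking logarithms gives
\[
	1 + \log h
	\lesssim
	1 + \log(A) + \abs{\widetilde B_x}_2^{1/(1-\mu_x)} + \abs{\widetilde B_y}_2^{1/(1-\mu_y)}
	,
\]
up to constants depending only on $d$, $\mu_x$, and $\mu_y$. Proposition~\ref{prop:EgidiS} applied to $g$ on $\cQ$ with this $h$ then yields the bound in terms of $g$ and $\cE$, and a final transfer back to $f$ and $E$ through the change of variables, together with the norm equivalences between $\abs{\cdot}_1$ and $\abs{\cdot}_2$ (absorbed into the constant $K$), completes the proof. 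The delicate point I expect is the double-sum bookkeeping that decouples the two Gevrey orders $\mu_x$ and $\mu_y$ into a clean product of two one-variable series; once the estimates $m! \leq 2^m k!\,l!$ and $1/m! \leq 1/(k!\,l!)$ are deployed in the right places, the structure becomes formally parallel to the one-parameter proof in Corollary~\ref{cor:localEstimatewChangofVariables}, and the assumption $\mu_x,\mu_y < 1$ is essential precisely to keep $S(a,\mu)$ finite.
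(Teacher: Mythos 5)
There is a fundamental mismatch here: the statement you were asked to prove is Proposition~\ref{prop:EgidiS} itself, i.e.\ the abstract Logvinenko--Sereda-type estimate on the unit cube for a function $g$ satisfying the Bernstein-type bounds with $h < \infty$. Your proposal does not prove this statement; it \emph{uses} it as a black box (``Proposition~\ref{prop:EgidiS} applied to $g$ in dimension $2d$'') and instead proves Proposition~\ref{prop:localEstimatewChangofVariablesAnisotropic}, the anisotropic local estimate from Appendix~\ref{app:local}. That argument --- change of variables $\Psi$, transfer of the derivative bounds with $\ell_x B_x$, $\ell_y B_y$, the double-sum bookkeeping producing $C(m)$, the factorization of $h$ into two one-variable Gevrey series, and the asymptotics $\sum_k a^k/(k!)^{1-\mu} \lesssim e^{Ca^{1/(1-\mu)}}$ --- is essentially the paper's own proof of Proposition~\ref{prop:localEstimatewChangofVariablesAnisotropic}, so as a proof of the statement in question it is circular and contributes nothing.

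In the paper, Proposition~\ref{prop:EgidiS} is not proved at all; it is quoted as a particular case of \cite[Proposition~31]{EgidiS-21}. A genuine proof would require quite different machinery than anything in your proposal: one passes from the $L^2$ Bernstein-type bounds (via a Sobolev embedding on the cube) to pointwise bounds on all derivatives, deduces that a suitable normalization of $g$ extends analytically to a complex neighborhood of the cube with a bound controlled by $h$, and then runs the Kovrijkine scheme --- a covering of the cube, selection of good subcubes, and a quantitative local estimate for analytic functions based on the Remez inequality and a Hadamard three-circles/doubling-index argument --- which is precisely what produces the exponent $K(1+\log h)$ and the polynomial dependence $(K/\vert\cE\vert)^{K(1+\log h)}$ on the density of $\cE$. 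None of these ingredients (analytic extension, Remez-type polynomial estimates, the good/bad cube selection at the level of a single cube) appears in your write-up, so the statement remains unproven.
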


\begin{proof}[Proof of Proposition~\ref{prop:localEstimatewChangofVariablesAnisotropic}]
	We may assume that $f$ is not identically equal to zero. Moreover, it is easy to see that the function
	$g := f \circ \Psi \in W^{\infty,2}((0,1)^{2d})$ satisfies
	\[
		\norm{ \partial_x^\alpha \partial_y^\beta g }_{L^2((0,1)^{2d})}
		\leq
		A (\ell_xB_x)^\alpha (\ell_yB_y)^\beta (\abs{\alpha}!)^{\mu_x} (\abs{\beta}!)^{\mu_y}  \norm{g}_{L^2((0,1)^{2d})}
		.
	\]
	Taking into account that
	\[
		\sum_{\abs{\alpha}=k} \frac{(\ell_xB_x)^{2\alpha}}{\alpha!}
		=
		\frac{\abs{\ell_xB_x}^{2k}}{k!}
		\quad\text{ and }\quad
		\sum_{\abs{\alpha}=m-k} \frac{(\ell_yB_y)^{2\alpha}}{\alpha!}
		=
		\frac{\abs{\ell_yB_y}^{2(m-k)}}{(m-k)!}
	\]
	for $k \in \NN$ with $0 \leq k \leq m$, we observe
	\begin{multline*}
		m! A^2\sum_{\abs{(\alpha,\beta)}=m} \frac{1}{\alpha! \beta!} (\ell_xB_x)^{2\alpha} (\ell_yB_y)^{2\beta} (\abs{\alpha}!)^{2\mu_x}
			(\abs{\beta}!)^{2\mu_y}\\
		\leq
		A^2\sum_{k=0}^m \binom{m}{k} (k!)^{2\mu_x}\abs{\ell_xB_x}^{2k} ((m-k)!)^{2\mu_y}\abs{\ell_yB_y}^{2(m-k)}
		\leq
		C(m)
	\end{multline*}
	with
	\[
		C(m)
		:=
		\left( A\sum_{k=0}^m \binom{m}{k} (k!)^{\mu_x}\abs{\ell_xB_x}^{k} ((m-k)!)^{\mu_y}\abs{\ell_yB_y}^{(m-k)} \right)^2
		.
	\]
	Thus, the function $g$ satisfies
	\begin{equation}\label{eq:Bernstein}
		\forall m\in\NN
		,\quad
		\sum_{\abs{(\alpha,\beta)}=m} \frac{1}{\alpha! \beta!} \norm{ \partial_x^\alpha \partial_y^\beta g }_{L^2((0,1)^{2d})}^2
		\leq
		\frac{C(m)}{m!} \norm{g}_{L^2((0,1)^d)}^2
		.
	\end{equation}

	Now, the asymptotics
	\[
		\sum_{m = 0}^\infty\frac{t^m}{(m!)^p}
		=
		\frac{\euler^{pt^{1/p}}}{p^{1/2}(2\pi t^{1/p})^{(p-1)/2}}\Bigl\{ 1 + O\Bigl( \frac{1}{t^{1/p}} \Bigr) \Bigr\}
		\qquad
		(p \in (0,4],\ t \to \infty)
	\]
	derived in \cite[Chapter 8, Eq.~(8.07)]{Olver-97} imply that for every $\mu \in (0,1]$ there is a constant $K_\mu > 0$ such that
	\[
		\forall t \geq 0
		,\quad
		\sum_{m=0}^\infty \frac{t^m}{(m!)^{1-\mu}}
		\leq
		K_\mu\euler^{t^{1/(1-\mu)}}
		.
	\]
	Setting $K' := \max\{K_{\mu_x},K_{\mu_y}\}$, by Cauchy product formula this yields
	\begin{align*}
		h
		:=
		\sum_{m=0}^\infty \sqrt{C(m)} \frac{(10d)^m}{m!}
		&=
		A \sum_{m=0}^\infty \sum_{k=0}^m \frac{(k!)^{\mu_x}\abs{10d\ell_xB_x}^{k}}{k!}
			\frac{((m-k)!)^{\mu_y}\abs{10d\ell_yB_y}^{(m-k)}}{(m-k)!}\\
		&=
		A \Biggl( \sum_{k=0}^\infty \frac{\abs{10d\ell_xB_x}^{k}}{(k!)^{1-\mu_x}} \Biggr)
			\Biggl( \sum_{l=0}^\infty \frac{\abs{10d\ell_yB_y}^{l}}{(l!)^{1-\mu_y}} \Biggr)\\
		&\leq
		A (K')^2 \exp\Bigl( \abs{10d\ell_xB_x}^{\frac{1}{1-\mu_x}} + \abs{10d\ell_yB_y}^{\frac{1}{1-\mu_y}} \Bigr)
		<
		\infty
		.
	\end{align*}
	In light of \eqref{eq:Bernstein} and the latter, we may apply Proposition~\ref{prop:EgidiS} to $g$ in dimension
	$2d$ instead of $d$ and with the set $\cE := \Psi^{-1}(E) \subset (0,1)^{2d}$ to obtain
	\[
		\frac{\norm{f}_{L^2(Q)}^2}{\norm{f}_{L^2(E)}^2}
		=
		\frac{\norm{g}_{L^2((0,1)^{2d})}^2}{\norm{g}_{L^2(\cE)}^2}
		\leq
		\biggl( \frac{K}{\vert\cE\vert} \biggr)^{K(1+\log h)}
		=
		\biggl( \frac{K\vert Q\vert}{\vert E\vert} \biggr)^{K(1+\log h)}
		,
	\]
	which proves the claim upon a suitable adaptation of the constant $K$.
\end{proof}%

\section{Exponential form of the ultra-analytic regularity}\label{app:expoential}

In this second and last section of the appendix, we give the proof of the following lemma which was used in
Section~\ref{subsec:smoothingbg} to deduce the estimates \eqref{eq:expoformregxbg} from Lemma \ref{lem:regxbg}.

\begin{lem}\label{lem:expoform}
	Let $\Lambda_1>0$, $\Lambda_2>1$ be some positive constants and $m\geq1$ be a positive integer. For every function
	$u\in L^2(\mathbb R^d)$ satisfying
	\begin{equation}\label{eq:gevrey}
		\forall\alpha\in\mathbb N^d
		,\quad
		\Vert\partial_x^{\alpha}u\Vert_{L^2(\mathbb R^d)}
		\leq
		\Lambda_1\Lambda_2^{\vert\alpha\vert}\ (\alpha!)^{\frac1m},
	\end{equation}
	the following estimate holds
	\[
		\big\Vert e^{C_{m,\Lambda_2}\vert D_x\vert^m}u\big\Vert_{L^2(\mathbb R^d)}
		\leq
		2^d\Lambda_1,
	\]
	where the positive constant $C_{m,\Lambda_2}>0$ is given by
	\begin{equation}\label{eq:cstexpo}
		C_{m,\Lambda_2} = \frac1{2em(2d\Lambda_2)^m}.
	\end{equation}
\end{lem}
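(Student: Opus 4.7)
My plan is to expand the exponential $e^{C_{m,\Lambda_2}|D_x|^m}$ as a power series, reduce each resulting term to the Fourier side via Plancherel, and control $\||\xi|^{mk}\widehat u\|_{L^2}$ using the Gevrey-type estimates \eqref{eq:gevrey} through a multinomial expansion. Concretely, I would first write by the triangle inequality
\[
	\norm{e^{C_{m,\Lambda_2}|D_x|^m} u}_{L^2(\RR^d)}
	\leq \sum_{k=0}^\infty \frac{C_{m,\Lambda_2}^k}{k!} \norm{|D_x|^{mk}u}_{L^2(\RR^d)},
\]
and by Plancherel's theorem identify $\norm{|D_x|^{mk}u}_{L^2} = (2\pi)^{-d/2} \norm{|\xi|^{mk}\widehat u}_{L^2}$.

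The next task is to convert the radial symbol $|\xi|^{mk}$ into Cartesian derivatives controlled by \eqref{eq:gevrey}. To that end I would use $|\xi|\leq\sum_{j=1}^d|\xi_j|$ and the multinomial theorem to write
\[
	|\xi|^{mk} \leq \Bigl(\sum_{j=1}^d|\xi_j|\Bigr)^{mk} = \sum_{|\alpha|=mk}\binom{mk}{\alpha}|\xi^\alpha|.
\]
Combined with the identity $\norm{\xi^\alpha \widehat u}_{L^2} = (2\pi)^{d/2}\norm{\partial^\alpha u}_{L^2}$ and hypothesis \eqref{eq:gevrey}, this yields
\[
	\norm{|D_x|^{mk}u}_{L^2} \leq \Lambda_1 \Lambda_2^{mk} \sum_{|\alpha|=mk}\binom{mk}{\alpha}(\alpha!)^{1/m}.
\]
Using the crude bounds $(\alpha!)^{1/m}\leq(|\alpha|!)^{1/m}=((mk)!)^{1/m}$ and the identity $\sum_{|\alpha|=mk}\binom{mk}{\alpha}=d^{mk}$, this simplifies further to
\[
	\norm{|D_x|^{mk}u}_{L^2} \leq \Lambda_1 (d\Lambda_2)^{mk} ((mk)!)^{1/m}.
\]

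Finally, I would invoke the multinomial estimate $\binom{mk}{k,\ldots,k}=\frac{(mk)!}{(k!)^m}\leq m^{mk}$ (coming from $(\underbrace{1+\cdots+1}_{m})^{mk}=m^{mk}$) to derive $((mk)!)^{1/m}\leq m^k k!$. Substituting collapses the $1/k!$ factor and reduces the series to a geometric one:
\[
	\norm{e^{C_{m,\Lambda_2}|D_x|^m} u}_{L^2(\RR^d)}
	\leq \Lambda_1\sum_{k=0}^\infty\bigl(C_{m,\Lambda_2}\,m\,(d\Lambda_2)^m\bigr)^k.
\]
With $C_{m,\Lambda_2}=\frac{1}{2em(2d\Lambda_2)^m}$ the common ratio equals $\frac{1}{2e\cdot 2^m}\leq\frac{1}{4e}$ for every $m\geq1$, so the series is bounded by $\frac{1}{1-1/(4e)}\leq 2\leq 2^d$, giving the desired estimate. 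No step is a real obstacle: the argument is essentially computational once the multinomial identities are assembled, and the only care required is to line up the numerical constants with the precise definition of $C_{m,\Lambda_2}$.
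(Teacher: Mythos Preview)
Your proof is correct and follows essentially the same strategy as the paper's: expand the exponential as a power series, bound $\||D_x|^{mk}u\|_{L^2}$ via the hypothesis \eqref{eq:gevrey}, and control $((mk)!)^{1/m}$ in terms of $k!$. The only differences are cosmetic: the paper uses the cruder inequality $|\xi|^N \leq d^N\sum_{|\alpha|=N}|\xi^\alpha|$ together with the count $\#\{|\alpha|=N\}\leq 2^{N+d-1}$, while you use the sharper multinomial identity $\sum_{|\alpha|=N}\binom{N}{\alpha}=d^N$; and the paper bounds $((mN)!)^{1/m}\leq (me)^N N!$ via $N^N\leq e^N N!$, whereas your multinomial-coefficient argument gives the slightly tighter $((mk)!)^{1/m}\leq m^k k!$. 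Both routes lead to a convergent series bounded by $2^d\Lambda_1$.
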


\begin{proof}
	Let $u\in L^2(\mathbb R^d)$ satisfy \eqref{eq:gevrey}. By using the estimate
	\[
		\forall N\geq0, \forall\xi\in\mathbb R^d,\quad\vert\xi\vert^N\le d^N\sum_{\vert\alpha\vert = N}\vert\xi^{\alpha}\vert,
	\]
	we get that for all $N\geq0$,
	\begin{equation}\label{20032018E1}
		\begin{aligned}
			\big\Vert\vert D_x\vert^Nu\big\Vert_{L^2(\mathbb R^d)}
			& \le d^N\sum_{\vert\alpha\vert = N}\big\Vert\partial^{\alpha}_xu\big\Vert_{L^2(\mathbb R^d)} \\[5pt]
			& \le d^N\sum_{\vert\alpha\vert = N}\Lambda_1 \Lambda_2^{\vert\alpha\vert} (\alpha!)^{\frac1m} \\[5pt]
			& \le d^N2^{N+d-1}\Lambda_1\Lambda_2^N (N!)^{\frac1m},
		\end{aligned}
	\end{equation}
	where we used the fact that
	\[
		\#\big\{\alpha\in\mathbb N^d : \vert\alpha\vert = N\big\} = \binom{N+d-1}{N}\le 2^{N+d-1}.
	\]
	Considering the positive constant $C_{m,\Lambda_2}>0$ defined in \eqref{eq:cstexpo}, we deduce from \eqref{20032018E1} that
	\begin{align*}
		\big\Vert e^{C_{m,\Lambda_2}\vert D_x\vert^m}u\big\Vert_{L^2(\mathbb R^d)}
		& \le \sum_{N=0}^{+\infty}\frac1{2^N}\frac1{(2d\Lambda_2)^{mN}}\frac1{(em)^NN!}\big\Vert\vert D_x\vert^{mN}u\big\Vert_{L^2(\mathbb R^d)} \\
		& \le 2^{d-1}\Lambda_1 \sum_{N=0}^{+\infty}\frac1{2^N}\frac{((mN)!)^{\frac1m}}{(em)^NN!}. \nonumber
	\end{align*}
	Moreover, by using the fact that $N^N\le e^NN!$ for every $N\geq0$, we get that
	\[
		((mN)!)^{\frac1m}\le(mN)^N\le(me)^NN!.
	\]
	The proof of Lemma \ref{lem:expoform} is now ended.
\end{proof}

\end{document}